\documentclass[11pt]{article}
\usepackage{a4wide}
\usepackage{amsmath, amsthm, xcolor}
\usepackage{amssymb}
\usepackage{graphicx,soul}
\usepackage[normalem]{ulem}
\usepackage{relsize}
\usepackage{enumerate}

\newcommand{\hidden}[1]{}

\usepackage{color}
\usepackage[mathscr]{euscript}
\usepackage{tikz}

\numberwithin{equation}{section}

\sloppy
\parskip=0.9ex

\topmargin -30pt

\newcommand{\R}{{\Bbb R}}

\newcommand{\Z}{{\Bbb Z}}
\newcommand{\N}{{\Bbb N}}

\newcommand{\I}{{\Bbb I}}

\newcommand{\T}{{\Bbb T}}

\newcommand{\K}{{\mathcal K}}

\newtheorem{theorem}{Theorem}[section]
\newtheorem{proposition}{Proposition}[section]

\newtheorem{corollary}{Corollary}[section]
\newtheorem{lemma}{Lemma}[section]

\newtheorem{thmBV}{Theorem BV  \!\!\!\!}

\newtheorem{thmR}{Theorem R \!\!\!}

\newtheorem{thmWW}{Theorem WW \!\!\!}

\theoremstyle{remark}
\newtheorem{remark}{{Remark}}

\def\ep{ \epsilon }

\newcommand{\vz}{\mathbf{z}}
\newcommand{\vw}{\mathbf{w}}
\newcommand{\vx}{\mathbf{x}}

\newcommand{\vp}{\mathbf{p}}

\newcommand{\cH}{\mathcal{H}}
\newcommand{\UP}{\mathcal{U}(\Psi)}

\begin{document}
\title{Diophantine approximation and the Mass Transference Principle: incorporating the unbounded setup}


 \author{ Bing Li \\ {\small\sc (SCUT) } \and Lingmin Liao \\ {\small\sc (Wuhan) }  \and
Sanju Velani \\ {\small\sc (York) }   \and  Baowei Wang \\ {\small\sc (Hust) }  \and  \vspace*{4ex}
 \and ~ Evgeniy Zorin \\ {\small\sc (York)}}

\date{}

 \maketitle

\begin{abstract}
We develop the Mass Transference Principle for rectangles of Wang \& Wu (Math. Ann. 2021) to incorporate the `unbounded' setup; that is, when along some direction the lower order (at infinity) of the side lengths of the rectangles under consideration is infinity. As applications, we obtain the Hausdorff dimension of naturally occurring  $\limsup$ sets within the classical framework of simultaneous Diophantine approximation and the dynamical framework of shrinking target problems. For instance, concerning the former,  for  $\tau >0$,  let $S(\tau)$ denote the set of $(x_1,x_2)\in \R^2 $ simultaneously  satisfying  the inequalities
$
\|q x_1 \| \, < \, q^{-\tau}  $ and $ \|q x_2 \| \, < \, e^{-q}
$
 for infinitely many $q \in \N$.  Then, the `unbounded'  Mass Transference Principle enables us to show that
 $\dim_{\rm H} S(\tau)   \, = \,   \min \big\{   1, 3/(1+\tau)  \big\}  \, $.

\end{abstract}

\bigskip

\section{Introduction}  \label{int}

To set the scene, we start by recalling the so called `weighted' version of the classical Jarn\'{\i}k--Besicovitch Theorem in the theory of simultaneous Diophantine approximation. For $ 1 \le i \le d$, let $\psi_i: \R^+ \to \R^+$ be a real positive function. For convenience, let $$\Psi:=(\psi_1,\dots,\psi_d)$$ and in turn, let $W_d(\Psi)$ denote the set of points $\vx=(x_1, \ldots, x_d)\in\I^d :=[0,1]^d$ for which the inequalities
\begin{equation}  \label{aug}
 \| q x_i \| \le  \psi_i(q)  \quad  (1\le   i \le   d) \ \
\end{equation}
hold for infinitely many $q\in\mathbb{N}$. Throughout,  $\| \ . \ \|$ denotes the distance to the nearest integer.
  Given  $\boldsymbol{\tau} = (\tau_1, \ldots, \tau_d) \in \R^d$  with $  \tau_i > 0 $  for all $1\le i\le d$,  in the case $ \psi_i: q \to  q^{-\tau_i} $ we write  $W_d(\boldsymbol{\tau})$  for  $W_d(\Psi)$.
  On making use of the  Borel--Cantelli lemma from probability theory, it is  relatively straightforward to show  that
$$
m_d( W_d(\Psi)) = 0  \quad { \rm if } \quad  \textstyle{\sum_{q = 1}^{\infty} } \psi_1(q) \times \dots \times \psi_d(q)   \, < \, \infty \, ,
$$
where $m_d$ is the $d$-dimensional Lebesgue measure.  In this situation, it is natural to consider the size of the set under consideration  in terms of Hausdorff dimension $\dim_{\rm H}$.
The following statement is due to Rynne \cite{Rynne}.
It builds on the fundamental works of Jarn\'{\i}k, Besicovitch, Eggleston and Dodson -- see \cite{Rynne} for the references. Throughout, for any  $\mathbf{t} = (t_1, \ldots, t_d)  \in (\R^+)^d $ we let
\begin{equation}
\label{weneedto}
	\zeta_i(\mathbf{t}):=\frac{d+1  + \sum_{k: t_k<t_i}(t_i-t_k)}{1+t_i}  \qquad   \quad  (1\le   i \le   d)    \, .
\end{equation}


\begin{thmR} Let $\boldsymbol{\tau} = (\tau_1, \ldots, \tau_d) \in \R^d$ and assume that  $0<\tau_i<\infty$ for all $1\le i\le d$
  and  that $\tau_1+\cdots+\tau_d>1$. 
Then
	$$\dim_{\rm H} W_d(\boldsymbol{\tau})=\min_{1\le   i\le   d}\zeta_i(\boldsymbol{\tau})  \, . $$
\end{thmR}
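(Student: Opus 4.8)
The plan is to prove the two bounds $\dim_{\rm H}W_d(\boldsymbol{\tau})\le\min_i\zeta_i(\boldsymbol{\tau})$ and $\dim_{\rm H}W_d(\boldsymbol{\tau})\ge\min_i\zeta_i(\boldsymbol{\tau})$ separately, after rewriting $W_d(\boldsymbol{\tau})$ as a $\limsup$ of rectangles. For $q\in\N$ and $\vp=(p_1,\dots,p_d)\in\Z^d$ set $R_{q,\vp}:=\prod_{i=1}^{d}\big[\,\tfrac{p_i}{q}-q^{-1-\tau_i},\ \tfrac{p_i}{q}+q^{-1-\tau_i}\,\big]$; then $\|qx_i\|\le q^{-\tau_i}$ for all $i$ holds exactly when $\vx\in R_{q,\vp}$ for some $\vp$, so $W_d(\boldsymbol{\tau})=\limsup_{q\to\infty}\bigcup_{\vp}R_{q,\vp}$ inside $\I^d$. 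For the upper bound I would use the obvious cover: fix $j$, and for each $Q$ note $W_d(\boldsymbol{\tau})\subseteq\bigcup_{q\ge Q}\bigcup_{\vp:\,R_{q,\vp}\cap\I^d\neq\emptyset}R_{q,\vp}$. Each $R_{q,\vp}$ is covered by $\asymp q^{\sigma_j}$ cubes of side $2q^{-1-\tau_j}$, where $\sigma_j:=\sum_{k:\tau_k<\tau_j}(\tau_j-\tau_k)$ (one cube in every direction $i$ with $\tau_i\ge\tau_j$, and $\asymp q^{\tau_j-\tau_i}$ of them in every direction with $\tau_i<\tau_j$), and there are $\asymp q^{d}$ admissible $\vp$. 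Summing $s$-th powers of diameters gives $\asymp\sum_{q\ge Q}q^{\,d+\sigma_j-s(1+\tau_j)}$, which tends to $0$ as $Q\to\infty$ once $s>\zeta_j(\boldsymbol{\tau})=(d+1+\sigma_j)/(1+\tau_j)$; hence $\mathcal H^{s}(W_d(\boldsymbol{\tau}))=0$, so $\dim_{\rm H}W_d(\boldsymbol{\tau})\le\zeta_j(\boldsymbol{\tau})$, and minimising over $j$ closes this half. (Covering instead by cubes of side $2q^{-1-t}$ for an arbitrary $t>0$ gives the exponent $(d+1+\sum_{k:\tau_k<t}(t-\tau_k))/(1+t)$; a one-line check, using $\tau_1+\cdots+\tau_d>1$ to rule out the regime $t\to\infty$, shows this is minimised over $t$ at one of the $\tau_j$, so the simple cover already loses nothing.)

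For the lower bound I would invoke the Mass Transference Principle for rectangles of Wang \& Wu. Since $\tau_1+\cdots+\tau_d>1$, the vector $a_i:=\tau_i/(\tau_1+\cdots+\tau_d)$ satisfies $0<a_i\le\tau_i$ and $a_1+\cdots+a_d=1$ (more generally any $(a_i)$ with $0<a_i\le\tau_i$ and $\sum a_i=1$ will serve). The ``base'' set $\limsup_{q}\bigcup_{\vp}\prod_{i=1}^d\big[\tfrac{p_i}{q}-q^{-1-a_i},\,\tfrac{p_i}{q}+q^{-1-a_i}\big]$ has full Lebesgue measure in $\I^d$: this is a Khintchine--Gallagher-type divergence statement, since $\sum_q\prod_i q^{-a_i}=\sum_q q^{-1}=\infty$ and the rationals form a ubiquitous system with the required regularity. (When $\tau_i\ge 1/d$ for all $i$ one may simply take $a_i=1/d$, and fullness is then immediate from simultaneous Dirichlet: $\|qx_i\|<q^{-1/d}$ for all $i$ holds for infinitely many $q$ at every irrational point.) Because $a_i\le\tau_i$, each target rectangle $R_{q,\vp}$ is nested inside the base rectangle with the same centre, so the rectangular MTP applies and produces a lower bound for $\dim_{\rm H}W_d(\boldsymbol{\tau})$ that is an explicit function of $(a_i)$ and $(\tau_i)$; taking the supremum over admissible $(a_i)$ yields $\min_i\zeta_i(\boldsymbol{\tau})$ — with $a_i=1/d$, for example, the $j$-th candidate falls out directly from the identity $d\,(1+1/d)=d+1$ visible in the numerator of $\zeta_j$. (Alternatively one can sidestep the Gallagher input and build, inside a generic base rectangle, a Cantor-type subset of $W_d(\boldsymbol{\tau})$ supporting a mass distribution with respect to which balls of radius $r$ carry mass $\ll r^{\,\min_i\zeta_i(\boldsymbol{\tau})}$, then conclude by the mass distribution principle.)

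The upper bound is routine; the crux is the lower bound, and within it two points require care. First, supplying the hypotheses of the rectangular MTP — in particular the full-measure base $\limsup$ set, which rests on a divergence Borel--Cantelli / ubiquity-of-rationals argument. Second, the elementary but bookkeeping-heavy optimisation identifying the supremum of the MTP lower bounds over the admissible exponent vectors $(a_i)$ with $\min_{1\le i\le d}\zeta_i(\boldsymbol{\tau})$. Finally, it is worth recording that the hypothesis $\tau_1+\cdots+\tau_d>1$ is used at both ends: in the upper bound to exclude the degenerate optimiser $t\to\infty$ (equivalently, to keep $\min_i\zeta_i\le d$), and in the lower bound to guarantee that an admissible $(a_i)$ exists at all.
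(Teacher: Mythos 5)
The paper does not actually prove Theorem~R: it is quoted from Rynne \cite{Rynne} as known input (and later recovered as a special case of Theorem~WW and of Theorem~\ref{Runbounded}). So your sketch cannot match "the paper's proof" verbatim; what it does match, quite closely, is the machinery the paper builds to prove its generalisation. Your upper bound is the standard covering argument and is fine as written (and indeed the hypothesis $\tau_1+\cdots+\tau_d>1$ guarantees $\min_i\zeta_i(\boldsymbol{\tau})\le d$, so nothing is lost). Your lower bound — full-measure base $\limsup$ set with exponents $a_i\le\tau_i$, $\sum_i a_i=1$, then the rectangular Mass Transference Principle, then optimise over $(a_i)$ — is exactly the route of Proposition~\ref{lowerbound-d} in the paper, with two differences worth noting. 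First, the paper does not invoke a Khintchine--Gallagher divergence theorem for the full-measure hypothesis; it proves the needed statement directly (Lemma~\ref{ll2}) via Minkowski's convex body theorem plus a Lebesgue density argument, which also localises the approximation to denominators $q\in[q_\ell/M,q_\ell]$ so that monotonicity of the approximating functions can be exploited in the general (non power-law) case. For pure powers $q^{-\tau_i}$ your Gallagher-based input is an acceptable substitute.

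The one substantive incompleteness is the final step you describe as "bookkeeping-heavy" and then assert: that the supremum of the MTP dimensional numbers over admissible $(a_i)$ equals $\min_{1\le i\le d}\zeta_i(\boldsymbol{\tau})$. Your worked choice $a_i=1/d$ only covers the regime $\tau_i\ge 1/d$ for all $i$ (the paper's Case~1). When some $\tau_i<1/d$ the uniform choice is inadmissible (it would violate $a_i\le\tau_i$), and the correct choice is genuinely non-obvious: in the paper's normalisation one takes $a_i=1+\tau_i$ for the indices with $\tau_i<t_K$ and $a_i=1+t^*$ for the rest, where $t_K$ is the smallest element of the auxiliary set $\mathcal{T}$ and $t^*=(1-\sum_{i:\tau_i<t_K}\tau_i)/\#\{i:\tau_i\ge t_K\}$; one then has to verify $s(\mathbf{u},\mathbf{v},i)=\zeta_i(\boldsymbol{\tau})$ for $\tau_i\ge t_K$, $s(\mathbf{u},\mathbf{v},i)=d$ for $\tau_i<t_K$, and that $\zeta_K\le d<\zeta_i$ for the small indices, so that the minimum is not spoiled. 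This is the crux of the lower bound rather than a routine optimisation, and your proposal leaves it as an assertion; with that case analysis supplied (as in Case~2 of Proposition~\ref{lowerbound-d}), your argument closes correctly.
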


Regarding the general function  $\Psi=(\psi_1,\cdots,\psi_d) : \R \to (\R^+)^d$, as pointed out explicitly by Rynne \cite[Corollary~1]{Rynne}, his theorem easily allows us to deduce that if the limits
\begin{equation}  \label{fg}
\lambda_i:=\lim_{n\to\infty}\frac{-\log\psi_i(n)}{\log n}   \qquad (1\le i\le d) \end{equation}
exist and  that  $0<\lambda_i<\infty$ for all $1\le i\le d$
  and   $\lambda_1+\cdots+\lambda_d>1$, then
  \begin{equation}  \label{ryncor} \dim_{\rm H} W_d(\Psi)=\min_{1\le   i\le   d}\zeta_i(\boldsymbol{\lambda})  \qquad {\rm where }  \qquad \boldsymbol{\lambda} = (\lambda_1, \ldots, \lambda_d)  \, .
  \end{equation}
	In other words, if the order at infinity of the functions $\psi_i$  ($1\le i\le d$) exist {(and indeed are finite)} then we are in good shape.
However, this is a pretty restrictive condition on $\Psi=(\psi_1,\cdots,\psi_d)$ and the problem of determining the dimension of $W_d(\Psi)$ for general monotonic $\Psi$ remained open until recently.  In view of earlier work of Dodson (see Remark~\ref{remq} below) the general consensus was that the dimensional formula given by \eqref{ryncor} is true with the order replaced by the lower order at infinity of  $\psi_i$   (i.e. replacing the limit in \eqref{fg} by the limit infimum).  In fact, Rynne  stated   in his paper that this can be proved by combining  his arguments with those stemming from the work of Dodson.  However, this turns out not to be the case.

The breakthrough for determining the dimension of $W_d(\Psi)$ for general monotonic $\Psi$ was made in the pioneering work of Wang $\&$  Wu \cite{WW2021} in which they developed the so-called  `rectangles to rectangles' Mass Transference Principle  -- see \S\ref{RR-baby}. In short, as an application of their extremely versatile mass transference principle, they showed that $\dim_{\rm H} W_d(\Psi)$  is dependent on the  set   $\UP$  of accumulation points $ \mathbf{t}=(t_1,t_2, \ldots, t_d)$ of the sequence
\begin{align}\label{sequence-psi}
\Big\{\Big(\frac{-\log\psi_1(n)}{\log n},\cdots,
\frac{-\log\psi_d(n)}{\log n}\Big)\Big\}_{n\ge   1}  \, .
\end{align}

\begin{thmWW}
 For  $1\leq i\leq d$, let  $\psi_i: \mathbb{R}^+\to\mathbb{R}^+$ be a real, positive, non-increasing function and suppose that $\UP$ is bounded. Then
  $$\dim_{\rm H} W_d(\Psi)=\sup_{\mathbf{t}\in\UP}  { \min } \left\{ \min_{1\le   i\le   d}\zeta_i(\mathbf{t})   { , d }  \right\} .$$
\end{thmWW}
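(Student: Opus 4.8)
Since $W_d(\Psi)$ is a $\limsup$ set assembled from finite unions of rectangles, I would establish the two inequalities separately: a direct covering argument for the upper bound, and the Wang--Wu `rectangles to rectangles' Mass Transference Principle, recalled in \S\ref{RR-baby}, for the lower bound. Write $\mathbf{e}(n):=\big(\tfrac{-\log\psi_1(n)}{\log n},\dots,\tfrac{-\log\psi_d(n)}{\log n}\big)$, so that $\UP$ is the set of accumulation points of $(\mathbf{e}(n))_{n\ge1}$; monotonicity of each $\psi_i$ together with the boundedness hypothesis forces $(\mathbf{e}(n))_n$ to be a bounded sequence, so $\UP$ is a nonempty compact subset of $\R^d$. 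I shall also use that $\mathbf{t}\mapsto\zeta_i(\mathbf{t})=\big(d+1+\sum_{k=1}^d(t_i-t_k)^+\big)/(1+t_i)$ is continuous, and that, $W_d(\Psi)$ being a $\limsup$ set, only large $n$ are relevant.

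\textbf{Upper bound.} Fix $s>\sup_{\mathbf{t}\in\UP}\min\{\min_{1\le i\le d}\zeta_i(\mathbf{t}),d\}$; since $\dim_{\rm H}W_d(\Psi)\le d$ for free, it suffices to show $\cH^s(W_d(\Psi))=0$. By compactness, cover $\UP$ by finitely many balls $B(\mathbf{t}^{(1)},\ep),\dots,B(\mathbf{t}^{(L)},\ep)$ centred at points of $\UP$; all but finitely many $n$ have $\mathbf{e}(n)$ in some $B(\mathbf{t}^{(\ell)},\ep)$, which partitions the large $n$ into classes $\mathcal{N}_1,\dots,\mathcal{N}_L$. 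For $n\in\mathcal{N}_\ell$ one has $\psi_i(n)\le n^{-(t^{(\ell)}_i-\ep)}$, so the level-$n$ union of rectangles is covered by $\asymp n^d$ boxes whose $i$-th side is $\asymp n^{-(1+t^{(\ell)}_i-\ep)}$. Covering each such box economically by balls (taking the best of its side lengths as the common radius) and summing the resulting $s$-volumes over large $n\in\mathcal{N}_\ell$, the series is dominated, for the optimal index $j$, by $\sum_{n}n^{\,d+\sum_{i:\,t^{(\ell)}_i<t^{(\ell)}_j}(t^{(\ell)}_j-t^{(\ell)}_i)\,-\,s(1+t^{(\ell)}_j-\ep)}$, which tends to $0$ along the truncation as soon as $s>\zeta_j(\mathbf{t}^{(\ell)})+O(\ep)$; this is exactly the covering computation behind Rynne's theorem, carried out one block at a time. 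Summing over $\ell$ gives $\cH^s(W_d(\Psi))=0$, hence $\dim_{\rm H}W_d(\Psi)\le\sup_{\mathbf{t}\in\UP}\min\{\min_i\zeta_i(\mathbf{t}),d\}+O(\ep)$; letting $\ep\to0$, using continuity and $\mathbf{t}^{(\ell)}\in\UP$, yields the upper bound.

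\textbf{Lower bound.} Fix $\mathbf{t}=(t_1,\dots,t_d)\in\UP$; it is enough to prove $\dim_{\rm H}W_d(\Psi)\ge\min\{\min_i\zeta_i(\mathbf{t}),d\}$. Choose $n_k\to\infty$ with $\mathbf{e}(n_k)\to\mathbf{t}$. For small $\ep'>0$, monotonicity of each $\psi_i$ spreads the good approximation at the $n_k$ over a multiplicatively thick set of denominators: there is $\de=\de(\mathbf{t},\ep')>0$ with $\psi_i(q)\ge q^{-(t_i+\ep')}$ for all $q\in\mathcal{Q}:=\bigcup_k\big([n_k^{1-\de},n_k]\cap\N\big)$ with $k$ large, and $\sum_{q\in\mathcal{Q}}q^{-1}=\infty$. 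Writing $u_i:=1+t_i+\ep'$ one then has
$$
W_d(\Psi)\ \supseteq\ \limsup_{q\in\mathcal{Q}}\ \bigcup_{0\le p_1,\dots,p_d\le q}\ \prod_{i=1}^d B(p_i/q,\,q^{-u_i})\ =:\ E_{\ep'}\,.
$$
To bound $\dim_{\rm H}E_{\ep'}$ from below I would apply the Wang--Wu Mass Transference Principle with the auxiliary family $\prod_iB(p_i/q,q^{-a_i})$, $q\in\mathcal{Q}$, for base exponents $\mathbf{a}=(a_1,\dots,a_d)$ subject to $1\le a_i\le u_i$ and $\sum_i a_i\le d+1$. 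Its hypothesis — that $\limsup_{q\in\mathcal{Q}}\bigcup_{\vp}\prod_iB(p_i/q,q^{-a_i})$ is of full Lebesgue measure — is a weighted Khintchine--Gallagher statement along $\mathcal{Q}$: the divergence side reduces to $\sum_{q\in\mathcal{Q}}q^{\,d-\sum a_i}=\infty$ (guaranteed by $\sum a_i\le d+1$ and the multiplicative thickness of $\mathcal{Q}$), and the full-measure conclusion then follows from the standard quasi-independence of these boxes. The MTP yields $\dim_{\rm H}E_{\ep'}\ge\min_{1\le i\le d}\vartheta_i(\mathbf{a},\mathbf{u})$, where $\vartheta_i$ is Wang--Wu's explicit exponent; optimizing over admissible $\mathbf{a}$ and letting $\ep',\de\to0$ one checks that $\sup_{\mathbf{a}}\min_i\vartheta_i(\mathbf{a},(1+t_i)_i)=\min\{\min_i\zeta_i(\mathbf{t}),d\}$. (When $\sum_i t_i\le1$ the optimum is $a_i=u_i$, the auxiliary family coincides with the target family and the MTP reduces to the full-measure statement itself, giving $\dim_{\rm H}=d$; when $\sum_i t_i>1$ a genuine transference is needed — for $d=1$ this is the classical choice $a_1=\min\{2,1+t_1\}$.) Taking the supremum over $\mathbf{t}\in\UP$ matches the upper bound.

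\textbf{Where the difficulty lies.} The upper bound is essentially bookkeeping once compactness of $\UP$ reduces it to finitely many `localized Rynne' estimates. The substance is in the lower bound, and it is of two kinds. First, one must certify the MTP hypothesis: the auxiliary box-$\limsup$ set must have full Lebesgue measure even though the admissible denominators are confined to the possibly very sparse sequence $(n_k)$ — this is exactly where monotonicity of the $\psi_i$ is indispensable, being used to fatten $\{n_k\}$ into the multiplicatively thick set $\mathcal{Q}$ so that the relevant divergence series still diverges. Second, one must carry out the optimization over the base shape $\mathbf{a}$ and reconcile $\min_i\vartheta_i(\mathbf{a},\mathbf{u})$ with $\min\{\min_i\zeta_i(\mathbf{t}),d\}$, and then verify that this lower bound, supremized over $\mathbf{t}\in\UP$, meets the covering upper bound — controlling the error terms uniformly as $\ep,\ep',\de\to0$, which is once more where the boundedness of $\UP$ enters.
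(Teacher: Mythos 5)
Your overall architecture --- a block-by-block, Rynne-type covering argument for the upper bound and the Wang--Wu `rectangles to rectangles' Mass Transference Principle applied to an auxiliary family of boxes with exponents $\mathbf{a}$ that are then optimized for the lower bound --- is exactly the architecture behind this theorem (the paper itself does not reprove Theorem~WW but cites \cite{WW2021}; the same machinery, in its unbounded form, is what \S\ref{classicalproof} carries out in Propositions~\ref{upperbound-unbdd-d} and~\ref{lowerbound-d}). Your upper bound sketch is essentially sound, and you correctly use boundedness of $\UP$ to localize to finitely many shape classes.

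The genuine gap is at the decisive point of the lower bound: the full measure hypothesis of the MTP (condition \eqref{fullmeasure1}) is never actually verified. You assert that the $\limsup$ of the auxiliary boxes $\prod_i B(p_i/q,q^{-a_i})$ over the thickened denominator set $\mathcal{Q}$ has full Lebesgue measure because $\sum_{q\in\mathcal{Q}}q^{d-\sum_i a_i}$ diverges and by ``standard quasi-independence of these boxes''. A divergence Borel--Cantelli argument along a restricted set of denominators is not standard: one must prove quasi-independence on average for the weighted sets $A_q=\bigcup_{\vp}\prod_i B(p_i/q,q^{-a_i})$, and the overlaps $A_q\cap A_{q'}$ involve gcd-type correlations that are exactly the technical heart of Khintchine--Gallagher theorems; no estimate or reference is offered, so the key hypothesis remains unproved. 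The paper's route avoids this entirely: taking $a_i\ge 1$ with $\sum_i a_i=d+1$ \emph{exactly}, Lemma~\ref{ll2} uses Minkowski's convex body theorem to produce, for every point, a denominator $q\le q_\ell$, then a counting estimate showing that denominators $q<q_\ell/M$ (with $M=4^{d+1}$) contribute at most half the measure of any ball, and finally the Lebesgue density theorem --- no second Borel--Cantelli input, and a constant-factor window $[q_\ell/M,q_\ell]$ suffices in place of your window $[n_k^{1-\delta},n_k]$. Note in particular that your relaxed constraint $\sum_i a_i\le d+1$ is not what the Minkowski/Dirichlet mechanism needs; equality is what makes it work. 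A second, smaller gap: the identification $\sup_{\mathbf{a}}\min_i\vartheta_i(\mathbf{a},\mathbf{u})=\min\{\min_i\zeta_i(\mathbf{t}),d\}$ is asserted (``one checks'') but is a substantive case analysis --- in the paper it is Cases~1 and~2 of Proposition~\ref{lowerbound-d}, with the explicit choices $a_i=1+1/d$ when $\min_i t_i>1/d$ and otherwise the construction via $t_K$ and $t^*$. So, as written, the lower bound is incomplete at precisely the two load-bearing steps which, to your credit, you correctly identified as where the difficulty lies.
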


\medskip

\begin{remark} \label{remq}
It is easily seen that if $ \psi_i: q \to  q^{-\tau_i} $  ($  \tau_i > 0 $)  for all $1\le i\le d$,  then $\UP = \boldsymbol{\tau} = (\tau_1, \ldots, \tau_d)  { \in (\R^+)^d}$  and so
 Theorem~WW implies Theorem~R.  Moreover, in the case  $\psi_1= \ldots =\psi_d := \psi $, let us  write $W_d(\psi)$ for $ W_d(\Psi) $ and observe that  Theorem~WW implies that for $\lambda > 1/d $
$$
\dim_{\rm H} W_d(\psi)=\frac{d+1}{1+\lambda}  \, ,
$$
where $\lambda$ is the lower order of $\psi$.
This is Dodson's theorem.  Specialising further, given a real number $\tau > 0$, we write $W_d(\tau) $ for  $W_d(\psi:  x \mapsto x^{-\tau})$ and we recovers the  simultaneous version of the classical Jarn\'{\i}k--Besicovitch theorem; that is to say that $\tau $  replaces  $\lambda$ in the statement of Dodson's theorem.
\end{remark}

\medskip

\begin{remark} \label{remunded}
Theorem R explicitly requires  that the exponents $\tau_i  \  ( 1 \le i \le d) $ are bounded or in its slightly more general form that the  order $\lambda_i $ at infinity of the functions $\psi_i$  ($1\le i\le d$) are bounded.   The situation in which one or more of the exponents is  unbounded cannot be inferred from the bounded statement -- see below for an explicit  example that illustrates this point.
Turning our attention to  Theorem~WW, as already mentioned the proof makes use of the `rectangles to rectangles'  Mass Transference Principle of  Wang $\&$  Wu. It turns out that in order to apply the latter it is necessary to impose the condition that  the  set  $\UP$  of accumulation points is bounded.  The main goal of this work is to  extend their mass transference principle  to  incorporate the `unbounded' setup (see Theorem~\ref{pointcase} in \S\ref{MTP}) and thereby  remove the need for  $\UP$ to be bounded within the framework of Theorem~WW -- see Theorem~\ref{Runbounded} below.  In turn, this means that we can establish a version of  Theorem~R free from the condition that the exponents $\tau_i$  are bounded -- see Corollary~\ref{corR} below.
\end{remark}

\medskip


\noindent We  now give a  concrete example that  illustrates the  point made in Remark~\ref{remunded}.   Given a real number $\tau >0$,  let $S(\tau)$ denote the set of $(x_1,x_2)\in \I^2 $ for which the inequalities
$$
\|q x_1 \| \, < \, q^{-\tau}   \qquad   {\rm and}  \qquad \|q x_2 \| \, < \, e^{-q}
$$
hold for infinitely many $q \in \N$.  Obviously,  the lower order of the function $ \psi : x \mapsto e^{-x} $ is infinity.    It is easily seen that for any $\nu > 0$,
$$
W(\tau) \times \{0\}    \ \subseteq \  S(\tau) \ \subseteq \  W(\tau,\nu)  \,
$$
where $ W(\tau,\nu) = W_2(\Psi) $ with $\psi_1: x \mapsto x^{-\tau} $ and $\psi_2 :  x \mapsto x^{-\nu} $.
It then follows on applying  Theorem R  and on letting $\nu \to \infty $,  that
$
\dim S(\tau) = 1 $  for $1/2 \le \tau  \le 1 $  and  that for $\tau > 1 $
$$
\frac{2}{1+\tau}   \, \le \, \dim_{\rm H} S(\tau)   \, \le \,   \min \Big\{   1, \frac{3}{1+\tau}  \Big\}  \, .
$$
The upshot of this is that Theorem R does not provide a precise formula for $ \dim_{\rm H} S(\tau) $ when $\tau > 1$.  To the best of our knowledge it cannot  be deduced   from existing results.   In short, the problem of determining a precise formula highlights a `hole' in classical theory of Diophantine approximation.  Just as importantly, it also brings to the forefront the limit of current results and frameworks.    As alluded to in Remark~\ref{remunded} above, we extend the  Mass Transference Principle of  Wang $\&$  Wu to incorporate the unbounded setup (such as in the above concrete example when the lower order of an approximating function under consideration  is unbounded). In turn, this allows us to establish  the following generalisation of Theorem~WW from which it is easy to deduce that, for $\tau > 0 $,
\begin{equation} \label{easy}
 \dim_{\rm H} S(\tau)   \, = \,   \min \Big\{   1, \frac{3}{1+\tau}  \Big\}  \, .
\end{equation}

\medskip

\begin{theorem}\label{Runbounded}
For  $1\leq i\leq d$, let  $\psi_i: \mathbb{R}^+\to\mathbb{R}^+$ be a real, positive, non-increasing function.
 Then
  $$\dim_{\rm H} W_d(\Psi)=
  \sup_{\mathbf{t}\in\UP}  \min \big\{ \min_{  i \in \mathcal{L}(\mathbf{t}) }\zeta_i(\mathbf{t}),\ \#\mathcal{L}(\mathbf{t})\big\},$$
	where given $\mathbf{t}=(t_1,\dots,t_d) \in (\R^+\cup \{+\infty\})^d$  we set  $\mathcal{L}(\mathbf{t}):=\{1\leq i\leq d: t_i<+\infty\}$.
\end{theorem}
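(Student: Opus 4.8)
The plan is to establish the two inequalities separately. For the lower bound we invoke the unbounded Mass Transference Principle of \S\ref{MTP} (Theorem~\ref{pointcase}); for the matching upper bound we truncate the approximating functions and appeal to Theorem~WW. Throughout, write $\mathbf{a}(q):=\big(-\log\psi_1(q)/\log q,\dots,-\log\psi_d(q)/\log q\big)$ for $q\ge 2$, so that $\UP$ is exactly the (non-empty, compact) set of accumulation points of $(\mathbf{a}(q))_{q\ge 2}$ in the compactification $[0,+\infty]^d$ --- here we may and do assume each $\psi_i(q)\to 0$, since a coordinate on which $\psi_i$ does not tend to $0$ is eventually unconstrained and contributes only a trivial interval factor to $W_d(\Psi)$.

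\emph{Lower bound.} Fix $\mathbf{t}=(t_1,\dots,t_d)\in\UP$ and, after permuting coordinates, assume $\mathcal{L}(\mathbf{t})=\{1,\dots,\ell\}$ with $t_1\le\dots\le t_\ell<+\infty=t_{\ell+1}=\dots=t_d$. Choose an increasing sequence $(q_k)$ with $\mathbf{a}(q_k)\to\mathbf{t}$, passing to a subsequence if necessary so that the scaling exponents below approach their limits rapidly enough to be fed into Theorem~\ref{pointcase}. Set $r_k:=1/q_k$ and consider the balls $B(\mathbf{p}/q_k,r_k)$ with $\mathbf{p}\in\{0,1,\dots,q_k\}^d$: since the rationals of denominator $q_k$ are $r_k$-dense in $\I$, for every $k$ the union of these balls already covers $\I^d$, so the corresponding $\limsup$ set has full Lebesgue measure. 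Shrinking each $B(\mathbf{p}/q_k,r_k)$ to the rectangle $\prod_{i=1}^{d}B\big(p_i/q_k,\psi_i(q_k)/q_k\big)$ yields a $\limsup$ set of rectangles contained in $W_d(\Psi)$ (because $\|q_k x_i\|\le|q_k x_i-p_i|<\psi_i(q_k)$), with rectangle side-lengths $\psi_i(q_k)/q_k=r_k^{\,1+t_i+o(1)}$ as $k\to\infty$, i.e.\ with (possibly infinite) scaling exponents $1+t_i\in[1,+\infty]$ --- precisely the configuration covered by Theorem~\ref{pointcase}. Substituting the exponents $1+t_i$, its conclusion reduces to Rynne's quantity $\zeta_i$ evaluated over the finite coordinates, capped by their number, and hence gives
$$
\dim_{\rm H} W_d(\Psi)\ \ge\ \min\big\{\min_{i\in\mathcal{L}(\mathbf{t})}\zeta_i(\mathbf{t}),\ \#\mathcal{L}(\mathbf{t})\big\}.
$$
Taking the supremum over $\mathbf{t}\in\UP$ completes the lower bound.

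\emph{Upper bound.} For $M\in\N$ set $\psi_i^{(M)}(q):=\max\{\psi_i(q),q^{-M}\}$; each $\psi_i^{(M)}$ is non-increasing, $W_d(\Psi)\subseteq W_d(\Psi^{(M)})$, and the accumulation set of the corresponding sequence is the bounded set $\{\min(\mathbf{t},M):\mathbf{t}\in\UP\}$ (coordinatewise minimum), so Theorem~WW applies to $\Psi^{(M)}$:
$$
\dim_{\rm H} W_d(\Psi^{(M)})\ =\ \sup_{\mathbf{t}\in\UP}\min\Big\{\min_{1\le i\le d}\zeta_i\big(\min(\mathbf{t},M)\big),\ d\Big\}.
$$
Using that $\mathbf{t}\mapsto\sum_k\max\{t_i-t_k,0\}$ is continuous, one checks that, for fixed $\mathbf{t}\in\UP$, as $M\to\infty$ one has $\zeta_i(\min(\mathbf{t},M))\to\zeta_i(\mathbf{t})$ for $i\in\mathcal{L}(\mathbf{t})$ and $\zeta_i(\min(\mathbf{t},M))\to\#\mathcal{L}(\mathbf{t})$ for $i\notin\mathcal{L}(\mathbf{t})$, so that the summand above tends to $\min\{\min_{i\in\mathcal{L}(\mathbf{t})}\zeta_i(\mathbf{t}),\#\mathcal{L}(\mathbf{t})\}$. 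Since the sets $W_d(\Psi^{(M)})$ decrease and $\bigcap_M W_d(\Psi^{(M)})\supseteq W_d(\Psi)$, and since $\mathbf{t}\mapsto\min\{\min_{i\in\mathcal{L}(\mathbf{t})}\zeta_i(\mathbf{t}),\#\mathcal{L}(\mathbf{t})\}$ is upper semicontinuous on the compact set $\UP$, a routine compactness argument lets us exchange $\sup_{\mathbf{t}\in\UP}$ with $\lim_{M\to\infty}$ and conclude
$$
\dim_{\rm H} W_d(\Psi)\ \le\ \inf_{M\in\N}\dim_{\rm H} W_d(\Psi^{(M)})\ =\ \sup_{\mathbf{t}\in\UP}\min\big\{\min_{i\in\mathcal{L}(\mathbf{t})}\zeta_i(\mathbf{t}),\ \#\mathcal{L}(\mathbf{t})\big\}.
$$
(Specialising to $d=2$ with $\psi_1:q\mapsto q^{-\tau}$ and $\psi_2:q\mapsto e^{-q}$ recovers \eqref{easy}.)

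\emph{Main obstacle.} Granting Theorem~\ref{pointcase}, the substantive work is in the lower bound: realising inside $W_d(\Psi)$ a $\limsup$ set of rectangles that genuinely satisfies the hypotheses of the unbounded Mass Transference Principle --- the scaling $r_k=1/q_k$, the full-measure property of the ambient $\limsup$ of balls, and the control (by thinning) of the discrepancy between $\psi_i(q_k)/q_k$ and $r_k^{\,1+t_i}$ when $t_i=+\infty$ --- together with checking that the output of Theorem~\ref{pointcase}, rewritten in terms of $\mathbf{t}$, is exactly $\min\{\min_{i\in\mathcal{L}(\mathbf{t})}\zeta_i(\mathbf{t}),\#\mathcal{L}(\mathbf{t})\}$. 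The upper bound is essentially bookkeeping; the only delicate point there is the exchange of supremum and limit, handled by compactness of $\UP$ together with the upper semicontinuity of the target function on $[0,+\infty]^d$.
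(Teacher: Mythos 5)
Your upper bound is essentially the paper's argument (truncate at $q^{-M}$, apply Theorem~WW, let $M\to\infty$), and although your ``routine compactness/upper semicontinuity'' step compresses what the paper does by an explicit three--case comparison of $\zeta_i(\widetilde{\mathbf{t}})$ with $\zeta_i(\mathbf{t})$, it is the same route and can be made rigorous. The genuine gap is in the lower bound. The full--measure configuration you feed into Theorem~\ref{pointcase} is too weak: you cover $\I^d$ at stage $k$ by the balls $B(\mathbf{p}/q_k,1/q_k)$ with denominator exactly $q_k$, which in the notation of Theorem~\ref{pointcase} means $r_k=1/q_k$ and $u_1=\cdots=u_d=1$, so $\sum_i u_i=d$. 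Computing the dimensional number with this $\mathbf{u}$ and $v_i=1+t_i$ gives, for $i\in\mathcal{L}(\mathbf{t})$, the quantity $\bigl(d+\sum_{k:t_k<t_i}(t_i-t_k)\bigr)/(1+t_i)$, i.e.\ $\zeta_i(\mathbf{t})-\tfrac{1}{1+t_i}$, strictly below $\zeta_i(\mathbf{t})$: the ``$+1$'' in the numerator of \eqref{weneedto} is exactly what your choice loses. In the motivating example $S(\tau)$ your argument only yields $2/(1+\tau)$, the trivial bound already noted in the introduction, not $3/(1+\tau)$.

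To get the sharp bound one needs a genuinely stronger full--measure (ubiquity) input: rectangles of sidelengths $(\widetilde{M}/q_\ell)^{a_i}$ centred at rationals with denominators ranging over $\tfrac1M q_\ell\le q\le q_\ell$, where $a_i\ge 1$ and $\sum_i a_i=d+1$; this is the paper's Lemma~\ref{ll2}, proved via Minkowski's convex body theorem plus a Lebesgue density argument, and it is here (together with monotonicity of the $\psi_i$) that the limsup of shrunk rectangles lands inside $W_d(\Psi)$. Moreover, even granted such a lemma, Theorem~\ref{pointcase} only applies when $u_i\le v_i=1+t_i$, and one must then \emph{optimise} the admissible vector $\mathbf{a}$ so that $s_0(\mathbf{a},1+\mathbf{t})$ actually equals $\min\{\min_{i\in\mathcal{L}(\mathbf{t})}\zeta_i(\mathbf{t}),\#\mathcal{L}(\mathbf{t})\}$; in the paper this is the nontrivial Case~1/Case~2 analysis involving $t_K$ and $t^*$, and it is only possible when $\sum_{i\in\mathcal{L}(\mathbf{t})}t_i\ge 1$. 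When $\sum_{i\in\mathcal{L}(\mathbf{t})}t_i<1$ the MTP route is abandoned altogether and the bound $\#\mathcal{L}(\mathbf{t})$ is obtained by a separate slice argument ($\I^m\times\{0\}^{d-m}\subset W_d(\Psi)$ via Minkowski). None of these three ingredients appears in your sketch, and your assertion that the conclusion of Theorem~\ref{pointcase} ``reduces to Rynne's quantity $\zeta_i$'' is false for the exponents you chose; so as written the lower bound does not prove the theorem.
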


\bigskip

\begin{remark} \label{def2024} Note that with reference to \eqref{weneedto}, the definition of $ \zeta_i(\mathbf{t}) $    is equally valid for $\mathbf{t} \in (\R^+\cup \{+\infty\})^d$ if we restrict $1\leq i\leq d $ to those with $ t_i<+\infty$; that is to say,  $ i \in \mathcal{L}(\mathbf{t})$.  In the case $\#\mathcal{L}(\mathbf{t}) \neq  d $, we set
$$
\zeta_i(\mathbf{t}) := \#\mathcal{L}(\mathbf{t})     \qquad  {\rm for }  \quad  i \notin \mathcal{L}(\mathbf{t})    \, .
$$ In view of this,  when $\#\mathcal{L}(\mathbf{t}) \neq  d $, the conclusion of the theorem can be simplified to
$$
\dim_{\rm H} W_d(\Psi)=
  \sup_{\mathbf{t}\in\UP}  \; \min_{  1 \le i \le d } \zeta_i(\mathbf{t})  \, .
$$
In particular, with the above extension of  the definition of $ \zeta_i(\mathbf{t}) $ in mind,  note that the dimension formula as stated in Theorem~WW is valid in both the bounded  case  ((i.e., when $\#\mathcal{L}(\mathbf{t}) =   d $) and the unbounded case (i.e., when $\#\mathcal{L}(\mathbf{t}) \neq  d $).  The point being that in that the latter
$$ { \min } \left\{ \min_{1\le   i\le   d}\zeta_i(\mathbf{t})   { , d }  \right\}  =   \min_{  1 \le i \le d } \zeta_i(\mathbf{t}) .
$$
\end{remark}

\noindent To see that the theorem implies  the dimension result \eqref{easy} for $S(\tau)$,  let  $\psi_1 :  x \mapsto x^{-\tau} $ and $\psi_2: x\to e^{-x}$ and simply note that $\UP =\{(\tau, +\infty)\}$ and $\zeta_1(\mathbf{t}) =3/(1 + \tau)$.


As a corollary of Theorem \ref{Runbounded}, we have the following unbounded version of Theorem~R  for general functions.
\begin{corollary}  \label{corR}
For  $1\leq i\leq d$, let  $\psi_i: \mathbb{R}^+\to\mathbb{R}^+$ be a real, positive, non-increasing function such that  the limit $\lambda_i $   given by \eqref{fg}
exists.
Then
\[
\dim_{\rm H} W_d(\Psi)=\min\Big\{\min_{i \in\mathcal{L}(\boldsymbol{\lambda})}\zeta_i(\boldsymbol{\lambda}), \ \#\mathcal{L}(\boldsymbol{\lambda}) \Big\}  \, ,
\]
where $\boldsymbol{\lambda}=(\lambda_1, \dots, \lambda_d) $ and $\mathcal{L}(\boldsymbol{\lambda})=\{1\leq i\leq d: 0< \lambda_i<+\infty\}$.

\end{corollary}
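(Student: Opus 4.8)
The plan is to read the corollary off from Theorem~\ref{Runbounded} once the set $\UP$ has been identified: under the extra hypothesis that the limits in \eqref{fg} exist, this set degenerates to a single point, so the supremum in Theorem~\ref{Runbounded} disappears and nothing remains but to unwind notation.

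First I would show that $\UP=\{\boldsymbol{\lambda}\}$ with $\boldsymbol{\lambda}=(\lambda_1,\dots,\lambda_d)$. By \eqref{fg}, for every $1\le i\le d$ the $i$-th coordinate $-\log\psi_i(n)/\log n$ of the sequence \eqref{sequence-psi} tends to $\lambda_i$ as $n\to\infty$, where $\lambda_i\in[0,+\infty]$ and, in the generality inherited from Theorem~R, $\lambda_i\in(0,+\infty]$, so that $\lambda_i<+\infty$ precisely for $i\in\mathcal{L}(\boldsymbol{\lambda})$. A sequence of vectors in the compact product $\big([0,+\infty]\big)^d$ all of whose coordinate sequences converge is itself convergent, with unique accumulation point the coordinatewise limit. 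Hence \eqref{sequence-psi} converges to $\boldsymbol{\lambda}\in(\R^+\cup\{+\infty\})^d$ and $\UP=\{\boldsymbol{\lambda}\}$. It is essential here that accumulation points, and hence $\UP$, are taken in the two-point compactification of $\R^+$ — exactly the domain $(\R^+\cup\{+\infty\})^d$ on which the quantity $\min\{\min_{i\in\mathcal{L}(\mathbf{t})}\zeta_i(\mathbf{t}),\,\#\mathcal{L}(\mathbf{t})\}$ is evaluated in Theorem~\ref{Runbounded} — so that the coordinates with $\lambda_i=+\infty$ are precisely the ones discarded from $\mathcal{L}(\boldsymbol{\lambda})=\{1\le i\le d:0<\lambda_i<+\infty\}$.

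It then remains only to substitute $\mathbf{t}=\boldsymbol{\lambda}$ into the conclusion of Theorem~\ref{Runbounded}. Since $\UP$ is a singleton the supremum is vacuous, and, reading $\zeta_i(\boldsymbol{\lambda})$ at the indices $i\notin\mathcal{L}(\boldsymbol{\lambda})$ via the convention of Remark~\ref{def2024} (which is only needed when $\#\mathcal{L}(\boldsymbol{\lambda})\ne d$), one obtains precisely
\[
\dim_{\rm H}W_d(\Psi)=\min\Big\{\min_{i\in\mathcal{L}(\boldsymbol{\lambda})}\zeta_i(\boldsymbol{\lambda}),\ \#\mathcal{L}(\boldsymbol{\lambda})\Big\}.
\]
I do not expect a genuine obstacle here: all the mathematical content sits in Theorem~\ref{Runbounded} (and hence in the unbounded Mass Transference Principle, Theorem~\ref{pointcase}), and the corollary is merely its specialisation to the case where the sequence \eqref{sequence-psi} actually converges rather than merely having a bounded, or possibly unbounded, set of accumulation points. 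The only point worth a line of care is the bookkeeping just described — verifying that the two notations for $\mathcal{L}$ agree on convergent sequences and invoking the extended definition of $\zeta_i$ from Remark~\ref{def2024} — after which the statement is immediate.
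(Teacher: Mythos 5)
Your proposal is correct and matches the paper's (implicit) derivation: the paper treats the corollary as an immediate specialisation of Theorem~\ref{Runbounded}, precisely because when the limits \eqref{fg} exist the sequence \eqref{sequence-psi} converges in $(\R^+\cup\{+\infty\})^d$, so $\UP=\{\boldsymbol{\lambda}\}$ and the supremum collapses, exactly as you argue. Your remark that one should read $\lambda_i\in(0,+\infty]$ (so that the corollary's $\mathcal{L}(\boldsymbol{\lambda})$ coincides with the theorem's $\mathcal{L}(\mathbf{t})$ at $\mathbf{t}=\boldsymbol{\lambda}$) is the same bookkeeping the paper relies on.
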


\noindent Note that when $\boldsymbol{\lambda } $ is bounded  (i.e. when
$\boldsymbol{\lambda }  \in (\R^+)^d$ ), then $\#\mathcal{L}(\boldsymbol{\lambda})= d $   and we easily deduce Theorem~R.

Another application of our modified `rectangles to rectangles' Mass Transference Principle (Theorem \ref{pointcase} in \S\ref{RR-baby})  is to the shrinking target problem for matrix transformation of tori.  It allows us to solve the general dimension  problem discussed in \cite[Section~5.3]{LLVZstand}. In its simplest form the general problem can be viewed as the $\times2 \times3$ analogue of the above classical problem concerning the dimension of the set $S(\tau)$.  Indeed, for $\tau >0$  let $S^*(\tau)$ denote the set of $(x_1,x_2)\in \I^2 $ for which the inequalities
$$
\|2^n x_1 \| \, < \, e^{-n\tau}   \qquad   {\rm and}  \qquad \|3^n x_2 \| \, < \, e^{-n^2}
$$
hold for infinitely many $n \in \N$.  Then,
our  general dimension result for matrix transformation of tori (\S\ref{RR-baby}, Theorem~\ref{rectangledimresult}) implies the following statement.

\begin{theorem} \label{matrixEG} For $ \tau > 0$,
\begin{equation*} \label{yes} \dim_{\rm H}  S^*(\tau)  \,  = \,   \min \Big\{  1 , \ \frac{\log 2+\log 3}{\log 2+\tau} \Big\}    \, .
\end{equation*}
\end{theorem}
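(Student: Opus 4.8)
The plan is to realise $S^*(\tau)$ as a $\limsup$ set of rectangular neighbourhoods associated with the matrix torus map $(x_1,x_2)\mapsto(2x_1,3x_2)$, and then invoke the general dimension result for such shrinking target sets, Theorem~\ref{rectangledimresult} (which is itself obtained from the modified `rectangles to rectangles' Mass Transference Principle, Theorem~\ref{pointcase}, that incorporates the unbounded setup). Concretely, $S^*(\tau)=\limsup_{n\to\infty}E_n$ where
$$
E_n\;=\;\Big\{x_1\in\I:\ \|2^nx_1\|<e^{-n\tau}\Big\}\times\Big\{x_2\in\I:\ \|3^nx_2\|<e^{-n^2}\Big\}
$$
is a union of $2^n3^n$ rectangles with centres at the points $(j2^{-n},k3^{-n})$ and side lengths $\asymp e^{-n\tau}2^{-n}$ in the first coordinate and $\asymp e^{-n^2}3^{-n}$ in the second. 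This is exactly the shape of $\limsup$ set to which Theorem~\ref{rectangledimresult} applies, with $q_1=2$, $q_2=3$ and approximating functions $\psi_1(n)=e^{-n\tau}$, $\psi_2(n)=e^{-n^2}$, both positive and non-increasing as required.

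I would first dispose of the upper bound directly, since the natural cover makes it transparent and does not need the mass transference machinery. Fix $s>0$ and use $S^*(\tau)\subseteq\bigcup_{n\ge N}E_n$. Covering each rectangle of $E_n$ by $O(1)$ cubes of side $\delta_n=e^{-n\tau}2^{-n}$ (its longer side, for all large $n$) gives
$$
\mathcal{H}^s\big(S^*(\tau)\big)\ \ll\ \liminf_{N\to\infty}\sum_{n\ge N}2^n3^n\big(e^{-n\tau}2^{-n}\big)^s,
$$
and the exponent of $e$ in the $n$-th summand is $n\big(\log2+\log3-s(\log2+\tau)\big)$, which is negative once $s>\tfrac{\log2+\log3}{\log2+\tau}$; hence $\dim_{\rm H}S^*(\tau)\le\tfrac{\log2+\log3}{\log2+\tau}$. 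Covering instead by cubes of the shorter side $\delta_n=e^{-n^2}3^{-n}$ needs $\asymp e^{-n\tau}2^{-n}/\delta_n$ cubes per rectangle, and the resulting sum carries a factor $\big(e^{-n^2}\big)^{s-1}$, which for any $s>1$ decays faster than any exponential in $n$; hence also $\dim_{\rm H}S^*(\tau)\le 1$. Together these give $\dim_{\rm H}S^*(\tau)\le\min\{1,\tfrac{\log2+\log3}{\log2+\tau}\}$.

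For the matching lower bound I would feed this data into Theorem~\ref{rectangledimresult}. The relevant `decay exponents' are $v_i:=-\lim_{n\to\infty}\tfrac1n\log\psi_i(n)$, so $v_1=\tau<\infty$ while $v_2=\lim_{n\to\infty}n=+\infty$; hence the index set of finite coordinates is $\mathcal{L}(\mathbf v)=\{1\}$ with $\#\mathcal{L}(\mathbf v)=1$, and the $i$-th side of the level-$n$ rectangle decays like $e^{-n\sigma_i}$ with $\sigma_i=\log q_i+v_i$, so $\sigma_1=\log2+\tau<\sigma_2=+\infty$. The general (weighted) dimension formula of Theorem~\ref{rectangledimresult} then specialises, with this data, so that the $\zeta$-type quantity for $i=1$ has numerator $\log q_1+\log q_2=\log2+\log3$ (the second coordinate still contributing its entropy $\log3$ even though its target shrinks too fast to add dimension) with no correction term since $\sigma_1<\sigma_2$, giving the value $\tfrac{\log2+\log3}{\log2+\tau}$, while the unbounded coordinate $i=2$ contributes the cap $\#\mathcal{L}(\mathbf v)=1$. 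Taking the minimum reproduces $\dim_{\rm H}S^*(\tau)=\min\{1,\tfrac{\log2+\log3}{\log2+\tau}\}$.

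The genuinely hard work sits inside Theorem~\ref{rectangledimresult} and ultimately Theorem~\ref{pointcase}: because the second coordinate is `unbounded' (writing $e^{-n^2}3^{-n}=3^{-na_2(n)}$ forces $a_2(n)=1+n/\log3\to\infty$), the classical Wang--Wu principle does not apply and one needs the extension developed in this paper to run the $\limsup$ lower bound. Granting that, the present statement is essentially a verification: checking the mild hypotheses on $\psi_1,\psi_2$, identifying the accumulation data $\mathbf v=(\tau,+\infty)$, and performing the short computation above. The one point calling for care is getting the \emph{weighted} dynamical form of the $\zeta$-function right --- the `$1$'s replaced by the entropies $\log q_i$, and the extra `$+1$' familiar from the Diophantine setting absent because here there is no summation over a denominator --- which is precisely why the answer reads $\tfrac{\log2+\log3}{\log2+\tau}$ rather than $\tfrac{3}{1+\tau}$.
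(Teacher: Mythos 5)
Your proposal is correct, and its core coincides with the paper's (one-line) proof: realise $S^*(\tau)$ as the shrinking target set $W(T,\Psi,\mathbf{0})$ for $T=\mathrm{diag}(2,3)$ with $\psi_1(n)=e^{-n\tau}$, $\psi_2(n)=e^{-n^2}$, note $\UP=\{(\tau,+\infty)\}$, $\mathcal{L}(\mathbf{t})=\{1\}$, and read the dimension off Theorem~\ref{rectangledimresult}. The difference is that the paper takes both bounds from the equality in Theorem~\ref{rectangledimresult}, whereas you re-prove the upper bound by a direct two-scale covering of the $\asymp 6^n$ rectangles of $E_n$; that argument is valid and makes the upper bound transparent without invoking the machinery, at the cost of being redundant. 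One small inaccuracy in your lower-bound bookkeeping: whether the second coordinate enters $\xi_1$ through $\mathcal{K}_3(1)$ (contributing $\log 3/(\log 2+\tau)$) or through $\mathcal{K}_1(1)$ (contributing $1$) depends on comparing $\log 3$ with $\log 2+\tau$, not on $\sigma_1<\sigma_2$; for $0<\tau<\log(3/2)$ one actually gets $\xi_1(\mathbf{t})=1+\tfrac{\log 2}{\log 2+\tau}$ rather than $\tfrac{\log 2+\log 3}{\log 2+\tau}$. Since in that range both exceed the cap $\#\mathcal{L}(\mathbf{t})=1$, the minimum is $1$ either way and the stated formula $\min\bigl\{1,\tfrac{\log 2+\log 3}{\log 2+\tau}\bigr\}$ is unaffected, so this is a presentational slip rather than a gap.
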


\bigskip

\begin{remark}  \label{Yubin}
In a recent  work, He \cite{He} has established a  Mass Transference Principle from the view point of Hausdorff content.  In particular, under the assumption of uniform local ubiquity his  Theorem~2.9  implies `full'  Hausdorff measure statements  and  can be utilized (see \cite[Remark 8]{He}) to show that   $   {\cal H}^{s}( S^*(\tau) ) = \infty $ at the critical exponent $s= \dim_{\rm H}  S^*(\tau) \, $.   However,  as discussed in Remark~\ref{NoToHMbut} below, his work does not  imply our  Mass Transference Principle (\S\ref{RR-baby}, Theorem \ref{pointcase}). Indeed,  our work is based on  the `original'  weaker full measure assumption and  as shown in
Remark~\ref{NoToHM} below, it is simply  not possible to obtain  a Hausdorff measures statement under the framework of Theorem~\ref{pointcase}.


\end{remark}

\section{The Mass Transference Principle(s) }  \label{RR-baby}

We start by defining Hausdorff measure and dimension
for completeness and for establishing some notation. Let $E$ be a subset of $\R^d$.   For $\rho
> 0$, a countable collection $ \left\{B_{i} \right\} $ of
Euclidean balls of diameter $|B_i| \le   \rho $ for each
$i$ such that $E \subset \bigcup_{i} B_{i} $ is called a $ \rho
$-cover for $E$.  Let $s$ be a non-negative number and define $$
 {\cal H}^{s}_{\rho}(E)
  \; = \; \inf \left\{ \sum_{i} |B_i|^s
\ :   \{ B_{i} \}  {\rm \  is\ a\  } \text{$\rho$-cover of}\  X
\right\} \; , $$ where the infimum is taken over all possible $
\rho $-covers of $E$. The {\it s-dimensional Hausdorff measure}
${\cal H}^{s} (E)$ of $E$ is defined by $$ {\cal H}^{s} (E) =
\lim_{ \rho \rightarrow 0} {\cal H}^{s}_{ \rho } (E) = \sup_{ \rho
> 0} {\cal H}^{s}_{ \rho } (E)
$$ \noindent and the {\it Hausdorff dimension}  $\dim_{\rm H}E$ of $E$ by
$$ \dim_{\rm H} \, E = \inf \left\{ s : {\cal H}^{s} (E) =0 \right\} =
\sup \left\{ s : {\cal H}^{s} (E) = \infty \right\} \, . $$

\vskip 9pt


\noindent Further details and alternative definitions of Hausdorff measure  and dimension can be found in \cite{F,MAT}.

We now  describe a powerful mechanism, the so called Mass Transference Principle, for obtaining  lower bounds for the Hausdorff dimension of a large class of  $\limsup$ sets.
 To set the scene, let $X$ be a locally compact subset of $\R^d$ equipped with a non-atomic probability measure $\mu$. Suppose there
exist constants $ \delta > 0$, $0<a\le 1\le b<\infty$ and $r_0 > 0$ such
that
\begin{equation} \label{MTPmeasure}
 a \, r ^{\delta}  \ \le    \  \mu(B)  \ \le    \   b \, r
^{\delta}
\end{equation}
for any ball $B=B(x,r)$ with $x\in X$ and radius $r\le r_0$. Such a measure is said to be \emph{$\delta$-Ahlfors regular}. It is well known that if $X$ supports a $\delta$-Ahlfors regular measure $\mu$, then we have
 \begin{equation} \label{ARdim}
\dim_{\rm H} X = \delta
\end{equation} and moreover
that $ \mu$ is comparable  to the $\delta$-dimensional Hausdorff measure ${\cal H}^\delta$ -- see \cite{F,MAT} for
the details. The latter implies that \eqref{MTPmeasure} is valid with $\mu$ replaced by  $\cH^{\delta} $.  Next, given $s > 0$  and a ball $B=B(x,r)$ we define
the scaled ball
$$
B^s:=B\big(x,r^{\frac{s}{\delta}}\big)\,.
$$
So, by definition $B^{\delta}=B$.  The original  Mass Transference Principle
(MTP) established in  \cite{BV2006}, allows us to transfer
$\cH^\delta$-measure theoretic statements for $\limsup$ subsets of
$X$ arising from balls to general $\cH^s$-measure theoretic statements.

\begin{thmBV}[MTP: balls to balls]\label{MTPBB}
		Let $X$ be a locally compact subset of $\R^d$ equipped with a  $\delta$-Ahlfors regular measure $\mu$.    Let $\{B_n\}_{n \in \N}$ be a sequence of balls in $X$ with radius $r(B_n)\to 0$ as $n\to\infty$.
		Let $s \ge  0 $ and suppose that
		$$\mathcal{H}^\delta \big( \limsup_{n\to\infty}B_n^s\big)=\mathcal{H}^\delta(X).$$
		Then, for any ball $B$	 $$\mathcal{H}^s\big(B\cap \limsup_{n\to\infty}B_n\big)=\mathcal{H}^s(B\cap X) \, $$ In particular, if $ s \le \delta$ then $$\dim_{\rm H} \big(\limsup_{n\to\infty}B_n   \big)  \ge   s \, .$$
	\end{thmBV}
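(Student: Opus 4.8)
\textit{Proof proposal.} The plan is to run the Cantor--set construction together with the mass distribution principle, transferring the mass carried by the enlarged balls $B_n^s$ onto the original balls $B_n$. First I would dispose of the extreme ranges of $s$. If $s>\delta$ then $\dim_{\rm H}X=\delta$ gives $\mathcal{H}^s(B\cap X)=0$, while (since $r(B_n)\to0$ and the $B_n$ are balls in $X$) $\limsup_{n\to\infty}B_n\subseteq X$ forces $\mathcal{H}^s(B\cap\limsup_{n\to\infty}B_n)=0$ as well, so the equality is trivial; if $s=\delta$ then $B_n^s=B_n$ and, because $\mathcal{H}^\delta\asymp\mu$ is a finite measure on $X$, the hypothesis says exactly that $X\setminus\limsup_{n\to\infty}B_n$ is $\mathcal{H}^\delta$--null, whence $\mathcal{H}^\delta(B\cap\limsup_{n\to\infty}B_n)=\mathcal{H}^\delta(B\cap X)$. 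So the substance is the range $0\le s<\delta$, where $\mathcal{H}^s(B\cap X)=\infty$ for every ball $B$ centred on $X$, and the task reduces to proving $\mathcal{H}^s(B\cap\limsup_{n\to\infty}B_n)>0$ for every such $B$; the equality (hence $\infty$) is recovered at the end by a standard localisation argument.

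The engine is a covering lemma drawn from the hypothesis: given any ball $\mathcal{B}$ centred on $X$ and any $\eta>0$, there is a finite, pairwise disjoint family $\{B_{n_i}^s\}_i$ with $B_{n_i}^s\subseteq\mathcal{B}$, with indices $n_i$ as large as we like and $r(B_{n_i})<\eta$, such that $\sum_i\mu(B_{n_i}^s)\ge\tfrac12\mu(\mathcal{B})$. This follows from the fact that, since $r(B_n)\to0$, only finitely many $B_n$ have radius $\ge\eta$, so $\limsup_{n\to\infty}B_n^s=\limsup_{n:\,r(B_n)<\eta,\,n\ge N}B_n^s$ still has full $\mu$--measure for every $N$; intersecting with $\mathcal{B}$, applying a $5r$--covering (Vitali) argument to $\{B_n^s:B_n^s\subseteq\mathcal{B},\ r(B_n)<\eta,\ n\ge N\}$ and using $\mu(5B_n^s)\asymp\mu(B_n^s)\asymp r(B_n)^s$ (Ahlfors regularity, together with $\mathcal{H}^\delta\asymp\mu$) produces the disjoint subfamily with the stated mass proportion. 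Iterating this lemma inside every ball of the previous level, with the radius bound $\eta$ at each level chosen so that radii collapse extremely fast between consecutive levels, yields nested finite families; since at level $k$ the indices can be taken to exceed all those used earlier, each point of the limit set $\mathbf{K}_\infty$ lies in $B_n$ for infinitely many $n$, so $\mathbf{K}_\infty\subseteq B\cap\limsup_{n\to\infty}B_n$.

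On $\mathbf{K}_\infty$ I would place a measure $\nu$ by a consistent mass split in which each parent's mass is distributed among its children $B_{n_i}$ in proportion to $\mu(B_{n_i}^s)\asymp r(B_{n_i})^s$ --- the transference step, whereby the small ball $B_{n_i}$ inherits the mass its enlargement $B_{n_i}^s$ would carry; the covering lemma's fixed proportion $\tfrac12$ guarantees the split is nondegenerate, so the total mass $\nu(\mathbf{K}_\infty)$ equals the (positive) mass assigned to the root. The crux is the Frostman bound $\nu(B(x,\rho))\le C\rho^s$ for all small $\rho$: a ball $B(x,\rho)$ either falls between two consecutive level--scales, in which case one bounds the number of children of the relevant parent (radius $R$) that it meets by an Ahlfors packing estimate applied to the \emph{disjoint enlarged balls} $B_{n_i}^s$, and multiplies by the per--child mass; or it lies within the region governed by a single child, in which case one descends one further level. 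In each case the estimate closes via the elementary inequality $\rho^{\delta}R^{\,s-\delta}\le\rho^{s}$, valid for $\rho\le R$ precisely because $s<\delta$. The mass distribution principle (see \cite{F}) then gives $\mathcal{H}^s(B\cap\limsup_{n\to\infty}B_n)\ge\nu(\mathbf{K}_\infty)/C>0$.

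Finally, positivity on every ball upgrades to the asserted equality $\mathcal{H}^s(B\cap\limsup_{n\to\infty}B_n)=\mathcal{H}^s(B\cap X)=\infty$ by a standard argument: were $(B\cap X)\setminus\limsup_{n\to\infty}B_n$ of positive $\mathcal{H}^s$--measure, a density/localisation step would furnish a sub-ball $B'$ centred on $X$ in which $\limsup_{n\to\infty}B_n$ is $\mathcal{H}^s$--null, contradicting the positivity already established in $B'$ (cf.\ \cite{BV2006}). The ``in particular'' clause is then immediate: for any $s\le\delta$ one has $\mathcal{H}^s(\limsup_{n\to\infty}B_n)=\mathcal{H}^s(X)>0$, hence $\dim_{\rm H}\limsup_{n\to\infty}B_n\ge s$. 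The main obstacle is the Frostman verification of the third paragraph: one must tune how fast the level--scales decay so that every scale $\rho$ falls into a clean case, maintain the fixed captured proportion at every level so that $\nu(\mathbf{K}_\infty)$ does not collapse over infinitely many levels, and balance the allocation so that it is simultaneously mass--conserving (positive total) and Frostman--safe --- Ahlfors regularity being used essentially both in the covering lemma and in every ball--counting step.
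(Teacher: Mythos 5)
The paper does not prove Theorem BV at all: it imports it verbatim from \cite{BV2006}, so the benchmark is the original proof there (and the paper's in-house analogue, the proof of Theorem~\ref{pointcase}). Your outline reproduces the right skeleton — a $K_{G,B}$-type covering lemma from the full-measure hypothesis, a nested Cantor set inside $B\cap\limsup_{n\to\infty}B_n$, mass transferred from $B_n^s$ to $B_n$, and the mass distribution principle — and your reductions for $s>\delta$ and $s=\delta$ are fine. The genuine gap is the final ``upgrade''. Your construction only yields $\mathcal{H}^s(B'\cap\limsup_n B_n)>0$ for every ball $B'$, but for $s<\delta$ the theorem asserts $\mathcal{H}^s(B\cap\limsup_n B_n)=\mathcal{H}^s(B\cap X)=\infty$, and positivity on every ball does not imply infinitude: a set of \emph{finite} $\mathcal{H}^s$-measure can meet every ball of $X$ in positive measure (put inside each member of a countable basis of balls a Cantor subset of dimension exactly $s$ with summable measures). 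Moreover your contradiction is aimed at a non-contradiction: $(B\cap X)\setminus\limsup_n B_n$ having positive (indeed infinite) $\mathcal{H}^s$-measure is the normal state of affairs — in the Jarník-type applications the complement has full Lebesgue measure — and since $\mathcal{H}^s$ restricted to $B\cap X$ is not locally finite when $s<\delta$, the density/localisation theorems you invoke are not available. What \cite{BV2006} actually does is make the Frostman constant arbitrarily small: for instance, the disjoint blown-up balls of the first level satisfy $\sum_n r(B_n)^s\gg\big(\max_n r(B_n)\big)^{s-\delta}\mu(B)\to\infty$ as the index threshold grows, and the measure is built so that $\nu(A)\ll r(A)^s/\eta$ with $\eta$ arbitrarily large, whence $\mathcal{H}^s(B\cap\limsup_n B_n)\gg\eta$ for every $\eta$, which is how the infinite (full) measure conclusion is reached.

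Relatedly, your Frostman verification with a single uniform constant does not close as described. Splitting a parent's mass proportionally to $\mu(B_{n_i}^s)\asymp r(B_{n_i})^s$, with the covering lemma giving only $\sum_i\mu(B_{n_i}^s)\gg\mu(P)\asymp r(P)^\delta$, yields the per-child bound $\nu(B_{n_i})\ll r(B_{n_i})^s\cdot\nu(P)/\mu(P)$; feeding in the inductive hypothesis $\nu(P)\le C\,r(P)^s$ produces the extra factor $r(P)^{s-\delta}>1$, and these factors compound over infinitely many levels, so the constant is not uniform. (Your inequality $\rho^{\delta}R^{s-\delta}\le\rho^{s}$ does settle the intermediate scales, but not this hand-off at the children's own scale.) This is exactly where additional structure is needed: in \cite{BV2006} the measure is arranged to behave like $\mu$ down to the scale of the blown-up balls so that the ratio $\nu/\mu$ stays controlled, while in the present paper's proof of Theorem~\ref{pointcase} the same difficulty is absorbed by choosing $G_j$ so large that $\nu(B_j)/\mu(B_j)\le r_n^{-\varepsilon}$, at the price of an $\varepsilon$ in the exponent — which is precisely why that argument (and yours, repaired in this way) delivers only the dimension bound, i.e. the ``in particular'' clause, and not the Hausdorff measure equality that Theorem BV asserts (compare Remark~\ref{NoToHM}).
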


\medskip

Note that the `in particular'  part follows directly from the definition of dimension and the  fact that $\cH^s(X) > 0 $  when $ s \le  \delta = \dim X$.   Although Theorem~BV has numerous applications, it has its limitations due to the fact that  the $\limsup$ sets are defined via balls.   Indeed, it is not applicable to the classical `rectangular' $\limsup$   subset   $W_d(\boldsymbol{\tau})$ of $\I^d$.  In short, $ \vw \in W_d(\boldsymbol{\tau})$ if and only if  there exist infinitely many  $(\vp,q) \in \Z^d \times \N$ such that
\begin{eqnarray*}
\vw  \in   R\big((\vp,q), \boldsymbol{\tau} \big) &:=&  \Big\{ \vx  \in \I^d :  |x_i-p_i/q| \le q^{-(\tau_i+1)} \  \ (1\le   i \le   d) \Big\} \\[1ex] &  = & \prod_{i=1}^d B(p_i/q, \psi(q)/q) \cap \I^d \, ;
 \end{eqnarray*}
that is,
$$
W_d(\boldsymbol{\tau}) =  \limsup_{q \to \infty} \bigcup_{\vp \in \Z^d} R\big((\vp,q),   \boldsymbol{\tau} \big)  \ .
$$
Clearly, the sets  $R\big((\vp,q), \boldsymbol{\tau} \big)$ are rectangular in shape and so Theorem~BV is only applicable when $\tau_1 = \ldots = \tau_d$ ; i.e. when the rectangles are balls.  The upshot of this basic and classical  setup is that an analogue of the   Mass Transference Principle  for $\limsup$ sets arising naturally via rectangles  is highly desirable.  As mentioned in the introduction, the breakthrough was made in the pioneering work of Wang $\&$  Wu \cite{WW2021} who obtained a dimension version of Theorem~BV  for $\limsup$ sets defined via rectangles within the `bounded' setup.   We now describe our main result which is a generalisation of their statement  to the `unbounded' setup.


\

\subsection{Mass Transference Principle for rectangles \label{MTP}  }


\bigskip



%


The lower bound for the dimension given by the Mass Transference Principle   is in terms of the so-called dimensional number.  This we first describe.

Let $p$ be a positive integer and let $\boldsymbol{\delta}=(\delta_1,\dots, \delta_p)\in (\mathbb{R}^+)^p $ be given.   Now for any
 $\mathbf{u}=(u_1,\dots, u_p)\in (\mathbb{R}^+)^p $ and
$\mathbf{v}=(v_1, \ldots,v_p)\in (\mathbb{R}^+\cup\{+\infty\})^p$ such that
\begin{equation} \label{111}
u_i\leq  v_i  \quad  \ {\text{for each}}\ \ 1\le i\le p  \, ,
\end{equation}
 let
\begin{equation} \label{112} \mathcal{L}(\mathbf{v}):=\{1\le   i \le   p: v_i<+\infty\}  \ \quad \text{and}  \
 \quad \mathcal{L}_\infty(\mathbf{v}):=\{1\le   i \le   p: v_i=+\infty\}  \end{equation}
and for each $ i \in \mathcal{L}(\mathbf{v}) $,  define the quantity
\begin{equation*}
	{s}(\mathbf{u}, \mathbf{v},i)=
	\sum_{k\in \mathcal{K}_1(i)}\delta_k+\sum_{k\in \mathcal{K}_2(i)}\delta_k\left(1-\frac{v_k-u_k}{v_i}\right) +\sum_{k\in \mathcal{K}_3(i)}\frac{\delta_ku_k}{v_i}
\end{equation*}
where
\begin{equation*}
	\mathcal{K}_1(i):= \{1\le   k\le   p: u_k> v_i\} \, ,  \ \ \ \mathcal{K}_2(i):= \{k\in\mathcal{L}(\mathbf{v}): v_k \le v_i\}, \
\end{equation*}
 and
\begin{equation*}
	\mathcal{K}_3(i):=\{1, \dots, p\}\setminus (\mathcal{K}_1(i)\cup \mathcal{K}_2(i)).
\end{equation*}
For each $ i \in \mathcal{L}_\infty(\mathbf{v}) $, we set
$$
{s}(\mathbf{u}, \mathbf{v},i)=\sum_{k\in\mathcal{L}(\mathbf{v})}\delta_k.
$$
Then, the  quantity
\begin{equation}\label{dimensionalnumber}
s_0(\mathbf{u},\mathbf{v}) := \min_{1 \le i \le p }  \{ {s}(\mathbf{u}, \mathbf{v},i) \}   =  \min \left\{\min_{i\in \mathcal{L}(\mathbf{v})}\{ {s}(\mathbf{u}, \mathbf{v},i) \}, \  \sum_{i\in\mathcal{L}(\mathbf{v})}\delta_i\right\}
\end{equation}
is called the \textit{dimensional number} with respect to $\mathbf{u}$ and $\mathbf{v}$.  Clearly, the right hand side term in the above follows directly from the definition of the quantity ${s}(\mathbf{u}, \mathbf{v},i)$ and also note that in view of \eqref{111}, for any $ 1 \le k \le p $ we  have that
\begin{equation}\label{dimensionalnumberUB}
s_0(\mathbf{u},\mathbf{v})  \leq {s}(\mathbf{u}, \mathbf{v},k) \leq  \sum_{ 1 \le i \le p }\delta_i  \, .
\end{equation}
Before proceeding with the statement of the Mass Transference Principle,  a few remarks are in order.

\bigskip

\begin{remark}
 In the `bounded'  setup (i.e. when  $\mathcal{L}_\infty(\mathbf{v})= \emptyset $), it is easily seen that the above definition of dimensional number coincides   with that of   Wang $\&$  Wu \cite[\S3.2]{WW2021}.   Indeed, on putting  $u_i=a_i$ and $v_i=a_i+t_i$ in the above recovers their formulation.
\end{remark}

\medskip

\begin{remark}\label{rem:6}
 It is evident from  the definition  of the quantities  ${s}(\mathbf{u}, \mathbf{v},i)$ and $s_0(\mathbf{u},\mathbf{v})$ that they remain unchanged when  $\mathbf{u} $ and  $\mathbf{v}$ are multiplied by a constant $c>0$;  that is
 \begin{equation}  \label{cucu}
 {s}(\mathbf{u}, \mathbf{v},i)={s}(c\mathbf{u}, c\mathbf{v},i) \quad   {\rm and }  \quad  s_0(\mathbf{u},\mathbf{v})=s_0(c\mathbf{u},c\mathbf{v})   \, .
 \end{equation}
\end{remark}

\noindent Further properties of the dimensional number, such as continuity, are discussed in  \S\ref{Subsec:dim-number}.   At this point it suffices  to simply note that Proposition~\ref{ll1} in \S\ref{Subsec:dim-number} implies that the dimensional number is continuous in $\mathbf{v}$; that is
\begin{equation}  \label{fufu}
\lim_{\epsilon \to 0} s_0(\mathbf{u}, (1\pm\epsilon)\mathbf{v})  = s_0(\mathbf{u}, \mathbf{v})   \,
\end{equation}
 for any $\epsilon>0$.

 Now, let us turn to the Mass Transference Principle for rectangles.

Fix an integer $p \ge 1$ and for $1 \le i \le p$, let
 $X_i$ be a subset of $\R^{d_i}$.  Obviously, if  $B_{i}$ is a ball in $X_i$ then  $\prod_{i=1}^pB_{i}$ is in general a rectangle in the product space $ \prod_{i=1}^p   X_i$.  The Mass Transference Principle for rectangles  provides a lower bound for the dimension of  $\limsup $ sets arising from sequences of such rectangles in the setting of  locally compact sets  equipped with Ahlfors regular measures.

%
%
%

\begin{theorem}\label{pointcase}
For each $1 \le i \le p$, let
 $X_i$ be a locally compact subset of $\R^{d_i}$ equipped with a  $\delta_i$-Ahlfors regular measure $\mu_i$.  Let $\{B_{i,n}\}_{n \in \N}$ be a sequence of balls $B_{i,n}$ in $X_i$ with radius $r(B_{i,n}) \to 0$ as $n\to\infty$ for each $1\le   i\le   p$.  Furthermore, for each $1\le   i\le   p$ assume that there exist sequences of positive real numbers $\{r_n\}_{n\in\mathbb{N}}$ and $\{v_{i,n}\}_{n\in\mathbb{N}}$ such that

\begin{equation}\label{ezsv}  r(B_{i,n})=r_n^{v_{i,n}}   \, 
 \end{equation} 
with
\begin{equation} \label{r_n_tends_to_zero}
\lim_{n\to\infty} r_n=0  
\end{equation}
 and
\begin{align}\label{cond:r-n}
   \lim\limits_{n\to \infty}v_{i,n}=v_i     
\end{align}
for some ${\mathbf{v}}=(v_1,\dots, v_p)\in(\mathbb{R}^+ \cup\{+\infty\} )^p$.
 Finally, suppose there exists $\mathbf{u}=(u_1,\dots, u_p)\in (\mathbb{R}^+)^p $ verifying~\eqref{111}  
and
		\begin{equation}\label{fullmeasure1}	
\mu_1\times\cdots\times\mu_p\left(\limsup_{n\to\infty}{ \prod_{i=1}^p}B_{i,n}^{s_{i,n}}\right)= \mu_1\times\cdots\times\mu_p \left( {\prod_{i=1}^p}X_i\right),
		\end{equation}
where \begin{equation}\label{sin}
s_{i,n}:=\frac{u_i\delta_i}{v_{i,n}}  \, . \end{equation}  
Then
\begin{equation} \label{pointcase_result}
			\dim_{\rm H}\left(\limsup_{n\to\infty} {\prod_{i=1}^p}B_{i,n} \right)\ge  s_0(\mathbf{u},\mathbf{v}).
		\end{equation}
\end{theorem}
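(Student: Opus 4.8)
The plan is to adapt the generational Cantor-set construction and mass distribution argument which underlie the bounded `rectangles to rectangles' Mass Transference Principle of Wang \& Wu \cite{WW2021}, the essentially new ingredient being a separate -- and in fact simpler -- treatment of the coordinate directions $i\in\mathcal{L}_\infty(\mathbf{v})$ along which the side lengths decay faster than any fixed power of $r_n$. It should be stressed at the outset that the `unbounded' conclusion \emph{cannot} be deduced from the bounded statement by truncating the exponents $v_i$: truncation replaces the true rectangles $\prod_i B(x_{i,n},r_n^{v_{i,n}})$ by strictly \emph{larger} ones and so controls the associated $\limsup$-sets from above, not from below. A genuine construction inside $\limsup_n\prod_i B_{i,n}$ is therefore unavoidable.

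Some routine preliminaries first. Localising if necessary we may assume each $X_i$ is bounded, so that $\mu:=\mu_1\times\cdots\times\mu_p$ is a finite measure on $X:=\prod_{i=1}^p X_i$; we may also assume, after a standard downward perturbation of $\mathbf{u}$ (which only enlarges the scaled balls, hence preserves \eqref{fullmeasure1}, and alters $s_0(\mathbf u,\mathbf v)$ by an arbitrarily small amount since each $s(\mathbf u,\mathbf v,i)$ is piecewise linear in $\mathbf u$), that $u_i<v_i$ for every $i$, and in particular that $s_{i,n}<\delta_i$ for all large $n$. The key observation is that, by \eqref{ezsv} and \eqref{sin},
$$
B_{i,n}^{s_{i,n}}=B\!\left(x_{i,n},\,(r_n^{v_{i,n}})^{u_i/v_{i,n}}\right)=B\!\left(x_{i,n},\,r_n^{u_i}\right),
$$
a ball whose exponent $u_i$ is finite regardless of whether $i\in\mathcal{L}(\mathbf v)$ or $i\in\mathcal{L}_\infty(\mathbf v)$. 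Thus hypothesis \eqref{fullmeasure1} says precisely that the enlarged rectangles $\widehat R_n:=\prod_{i=1}^p B(x_{i,n},r_n^{u_i})$ satisfy $\mu\big(\limsup_n\widehat R_n\big)=\mu(X)$; since a $\limsup$-set is unchanged by deleting any initial segment, we may throughout restrict $n$ to be as large as we please.

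Next I would build, following Wang \& Wu, a generational Cantor set $\mathcal{K}=\bigcap_{k\ge1}\bigcup_{R\in G_k}R\subseteq\limsup_n\prod_{i=1}^p B_{i,n}$ together with a probability measure $\nu$ supported on it. Here $G_k$ is a finite family of true rectangles $R_m:=\prod_{i=1}^p B(x_{i,m},r_m^{v_{i,m}})$, with every index $m$ appearing in $G_k$ exceeding a cutoff $N_k$ and $N_k\to\infty$; the children of $R_m\in G_k$ are those $R_n$ with $n\ge N_{k+1}$ whose enlargement $\widehat R_n$ lies inside $R_m$, chosen pairwise disjoint and to cover a fixed positive proportion of $R_m$ in the sense that $\sum_{\text{children}}\mu(\widehat R_n)\gg\mu(R_m)$. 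Such a selection exists because $\limsup_{n\ge N_{k+1}}\widehat R_n=\limsup_n\widehat R_n$ has full $\mu$-measure, hence full measure in the positive-measure set $R_m$, so a covering argument for rectangles (as in \cite{WW2021}) supplies the required disjoint subfamily. The measure $\nu$ is distributed down the tree in proportion to the $\mu$-masses of the enlarged rectangles, $\nu(R_n)=\nu(R_m)\cdot\mu(\widehat R_n)\big/\sum_{\text{siblings}}\mu(\widehat R_{n'})$, and one takes $N_{k+1}$ larger than every index occurring in $G_k$, so that indices strictly increase along each branch and hence $\mathcal{K}\subseteq\limsup_n\prod_i B_{i,n}$. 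Two points force $N_k$ to grow fast: for the nesting $\widehat R_n\subseteq R_m$ to be feasible along a direction $i\in\mathcal{L}_\infty(\mathbf v)$ one needs $r_n^{u_i}\le r_m^{v_{i,m}}$ with $v_{i,m}$ already huge, so $n$ must be enormous relative to $m$; and one wants $r_n$ and the $v_{i,n}$ (for $i\in\mathcal{L}(\mathbf v)$) well approximated by $0$ and $v_i$ at each level.

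The heart of the matter -- and the step I expect to be the main obstacle -- is the local measure estimate: one must show that for every $x\in\mathcal{K}$ and all small $\rho>0$,
$$
\nu\big(B(x,\rho)\big)\ \ll\ \rho^{\,s_0(\mathbf u,\mathbf v)},
$$
whereupon $\dim_{\rm H}\mathcal{K}\ge s_0(\mathbf u,\mathbf v)$ by the Mass Distribution Principle \cite{F} and the theorem follows. This is proved by a case analysis of where $\rho$ falls relative to the two scales $r_m^{u_i}$ (the $i$-th side of $\widehat R_m$) and $r_m^{v_{i,m}}$ (the $i$-th side of $R_m$) attached to each generation $k$ and coordinate $i$. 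For the bounded coordinates $i\in\mathcal{L}(\mathbf v)$ this reproduces the computation of Wang \& Wu and yields the local bound $\rho^{s(\mathbf u,\mathbf v,i)}$ with $s(\mathbf u,\mathbf v,i)$ as in \eqref{dimensionalnumber}, the three sums over $\mathcal{K}_1(i),\mathcal{K}_2(i),\mathcal{K}_3(i)$ recording respectively the coordinates whose enlarged rectangles are already sub-$\rho$, those that have passed a level transition by scale $\rho$, and those caught mid-level. For $i\in\mathcal{L}_\infty(\mathbf v)$ the true rectangles have collapsed in that direction essentially at once, so at the scales relevant to the estimate coordinate $i$ contributes nothing beyond the bounded coordinates and the corresponding bound is $\rho^{\sum_{k\in\mathcal{L}(\mathbf v)}\delta_k}$ -- precisely the convention $s(\mathbf u,\mathbf v,i)=\sum_{k\in\mathcal{L}(\mathbf v)}\delta_k$. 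Taking the minimum over all $i$ gives $\rho^{s_0(\mathbf u,\mathbf v)}$. The bulk of the technical work lies in absorbing the error terms produced by $v_{i,n}\to v_i$ rather than $v_{i,n}=v_i$ and by the constant-proportion losses at each generation, and in verifying that the $\mathcal{L}_\infty$-directions genuinely do not erode the exponent below $s_0(\mathbf u,\mathbf v)$ -- which is also the reason, as noted in Remark~\ref{NoToHM}, that one can only expect a dimension (and not a Hausdorff-measure) statement in the unbounded regime.
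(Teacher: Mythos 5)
Your overall architecture is the paper's: observe that $B_{i,n}^{s_{i,n}}=B(x_{i,n},r_n^{u_i})$, use the full measure hypothesis to select, inside each piece of the previous generation, a separated packing of enlarged rectangles of comparable total measure, nest the true rectangles inside them, distribute mass proportionally, and finish with the Mass Distribution Principle. However, two concrete points in your sketch do not survive scrutiny, and they are exactly where the unbounded setup differs from Wang \& Wu. First, the H\"older estimate you aim for, $\nu(B(x,\rho))\ll\rho^{s_0(\mathbf u,\mathbf v)}$, is too strong: along the construction the ratio $\nu(R)/\mu(R)$ on the pieces necessarily blows up (in an unbounded direction the true rectangle occupies a proportion $r_n^{-(u_i-v_{i,n})\delta_i}$ of its enlargement, which tends to $0$ super-polynomially), so a uniform constant in front of $\rho^{s_0}$ is unattainable; the paper instead proves $\nu(B(x,\rho))\ll_{\varepsilon}\rho^{s_0(\mathbf u,\mathbf v)-2\varepsilon}$ by choosing the generation cutoffs $G_j$ so fast that $\nu(B_j)/\mu(B_j)\le r_n^{-\varepsilon}$ for all later indices $n$ (see \eqref{ratio_nu_mu_is_small}) and then absorbing this loss against a geometric series in the dyadic scales. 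This is fixable, but it must be built into the construction, not absorbed afterwards.

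Second, and more seriously, the key assertion that the $\mathcal{L}_\infty$-directions ``contribute nothing beyond the bounded coordinates'' while the bounded coordinates ``reproduce the computation of Wang \& Wu'' is precisely what needs proof, and your tree of rectangles does not by itself deliver it. When $\rho$ lies between the smallest and the largest side of a child rectangle, the measure inside that rectangle is carried by grandchildren whose placement you have not controlled; the paper handles this by uniformising each shrunk rectangle into $5r$-separated balls of radius $2^{-k\,w(G,B,k)}$, where $w(G,B,k)=\max_i v_i(G,B,k)\to\infty$ as the cutoffs grow (see \eqref{w_tends_to_infty}), and it is exactly this $w\to\infty$ that makes the per-generation exponent $\sum_i\delta_i\bigl(1-\frac{v_i(j-1,k)-u_i}{w(j-1,k)}\bigr)$ at least $\sum_{i\in\mathcal L(\mathbf v)}\delta_i-\varepsilon$. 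Moreover, at intermediate scales the local estimate produces the exponents $s(\mathbf u,\mathbf v(n),A)$ with $A$ ranging over the merged list of the $u_i$ \emph{and} the finite exponents $v_{i,n}$; to conclude that all of these exceed $s_0(\mathbf u,\mathbf v)-\varepsilon$ one needs the unbounded analogue of Wang--Wu's Proposition~3.1 (Proposition~\ref{Prop:4}, showing $\min_i\overline{s}(\mathbf u,\mathbf v,i)\ge s_0(\mathbf u,\mathbf v)$) together with the continuity of the dimensional number in $\mathbf v$ (Proposition~\ref{ll1}), since $v_{i,n}\ne v_i$. Neither is available off the shelf from the bounded theory, and without them the minimum over the case analysis cannot be identified with $s_0(\mathbf u,\mathbf v)$; supplying these two propositions, the ball-uniformisation at scale $2^{-kw}$, and the $\varepsilon$-bookkeeping above is the actual content of the unbounded theorem.
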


\bigskip
\begin{remark}\label{rem6666}
Note that since we assume that ${\mathbf{u}}, {\mathbf{v}} $ satisfy  \eqref{111}, we have that \eqref{dimensionalnumberUB} holds. Then,  this  together  with the fact that \eqref{ARdim} holds for each set $X_i$ ($1 \le i \le p$), implies  that
$$
s_0(\mathbf{u},\mathbf{v})  \leq  \sum_{ 1 \le i \le p }\delta_i  = \dim  \Big( {\prod_{i=1}^p}X_i\Big)  \, .
$$
Thus, the lower bound inequality \eqref{pointcase_result}  is consistent with the fact that `rectangular' $\limsup$ set under consideration is a subset of the product space $ {\prod_{i=1}^p}X_i $ and so
$$
\dim_{\rm H}\left(\limsup_{n\to\infty} {\prod_{i=1}^p}B_{i,n} \right) \le \dim  \Big( {\prod_{i=1}^p}X_i\Big)  \, .
$$
\end{remark}

%
%

\bigskip

\begin{remark}\label{rem666}
 Note that in view of  \eqref{ezsv} and \eqref{sin},  for each $1\le i\le p$, the scaled ball
\begin{equation} \label{B_in_s_in}
B_{i,n}^{s_{i,n}}:=     B(x_{i,n},r(B_{i,n})^{\frac{s_{i,n}}{\delta_i}})  =   B(x_{i,n},r_n^{u_i}) \, .
\end{equation}  Thus the full measure condition \eqref{fullmeasure1} can be rewritten as
  \begin{equation}\label{fullmeasure2}	\mu_1\times\cdots\times\mu_p\Big(\limsup_{n\to\infty}{ \prod_{i=1}^p}B(x_{i,n},r_n^{u_i})\Big)= \mu_1\times\cdots\times\mu_p \Big( {\prod_{i=1}^p}X_i\Big).
  \end{equation}
Also, observe that  in view of \eqref{r_n_tends_to_zero},  if we have the strict inequality     $u_i <  v_i$,  it follows that for $n$ sufficiently large (so that $r_n<1$) the radius  of any ball $B_{i,n}$  from the given sequence of balls  is less than the radius of the scaled ball  $B_{i,n}^{s_{i,n}}$.
Thus $$R_n:={\prod_{i=1}^p}B_{i,n}  \ \subset  \  R_n^{\mathbf{s}_n}:= {\prod_{i=1}^p}B_{i,n}^{s_{i,n}}$$
  and in short, if the $ \limsup $ set of the  `blown up' rectangles  $ R_n^{\mathbf{s}_n} $  has full  measure then the  Mass Transference Principle provides a lower bound for the dimension of the $ \limsup $ set of the given rectangles  $ R_n $.  We will see in Remark~\ref{111_to_222S} that when it comes to proving Theorem~\ref{pointcase} we can assume without loss of generality that the strict inequality
$u_i\ < v_i$  holds.

   \end{remark}

\bigskip

\begin{remark}\label{rem:7a} It is worth emphasizing that  the assumption  on the existence of the sequences $\{r_n\}_{n\in\mathbb{N}}$ and $\{v_{i,n}\}_{n\in\mathbb{N}}$  is   automatically  satisfied for balls satisfying the following relatively mild `limit condition' on the radii.  As in the theorem,  for each $1\leq i\leq p$,  let   $\{B_{i,n}\}_{n \in \N}$ be a sequence of balls $B_{i,n}$ in $X_i$ with radius $r(B_{i,n}) \to 0$ as $n\to\infty$.
 \emph{We say that the collection $\{B_{i,n}: 1\leq i\leq p, n \in \N \}$ of balls   satisfies the limit condition {\rm \textbf{LC}} if for each pair $(i,j)$ with $1\leq i, j\leq p$, the limit
$$
a_{i,j}:=\lim_{n\to\infty} {\log r(B_{i,n}) \over \log r(B_{j,n})}
$$
exists (possibly equal to $0$ or $+\infty$).}
To see that (\ref{ezsv}) and (\ref{cond:r-n}) are satisfied for such  collection of balls,   let
$$
r_n^*:=\max\{r(B_{i,n}): 1\le i\le p\}.
$$
Then, since $p$ is finite  there exists an index $1\le j_o\le p$ and an infinite sequence $\{n_k\}_{k \in \N}$ of integers such that $r_{n_k}^*=r(B_{j_o, n_k})$ for all $k\in \N$. Now in view of limit condition  \textbf{LC}, for each  $1\le i\le p$ the limit  $$
a_{i, j_o}=\lim_{n\to \infty}\frac{\log r(B_{i,n})}{\log r(B_{j_o, n})}=\lim_{k\to \infty}\frac{\log r(B_{i, n_k})}{\log r(B_{j_o, n_k})}\ge 1
$$ exists.  The upshot of this  is that  (\ref{ezsv}) and (\ref{cond:r-n}) are satisfied  on taking
$$r_n:=r(B_{j_o, n}),   \qquad  v_{i,n}:=\frac{\log r(B_{i,n})}{\log r(B_{j_o, n})}   \qquad {\rm and } \qquad  v_i := a_{i, j_o} \, .  $$
\end{remark}

\bigskip

\begin{remark}  \label{WW rem}
In the statement of Theorem~\ref{pointcase}, if for each $1\le   i\le   p$ the  sequence of positive real numbers $\{v_{i,n}\}_{n\in\mathbb{N}}$ can be taken  to be a finite positive constant sequence; that is
\begin{equation} \label{const_vi}
0< v_{i,n}=v_i < \infty   \text{ for all } n \in \N  \, , \end{equation} then the theorem  reduces to (bounded)  Mass Transference Principle  of  Wang \& Wu \cite[Theorem 3.4]{WW2021} - see also \cite[Theorem 11]{LLVZstand}.  Note that in this case,  we can simply write $s_i$ for  $s_{i,n}$  since by definition it is independent of $n \in \N$.  To be precise, the Mass Transference Principle of  Wang \& Wu  incorporates  the more general framework of resonant sets and under \eqref{const_vi} our theorem reduces to theirs when the resonant sets are restricted to points. In \S\ref{extendingour},   we briefly discuss the resonant set framework and   we formulate the general form of Theorem~\ref{pointcase}  that incorporates resonant sets.   
\end{remark}

\bigskip

\begin{remark}  \label{WwwwwW}    With the previous remark in mind, even in bounded case when for each $1\le   i\le   p$
\begin{equation} \label{finite_vi}
v_i<+\infty 
\, ,
\end{equation}
 the statement of our theorem is more general than that of Wang \& Wu \cite[Theorem 3.4]{WW2021}. In short we have the `extra' flexibility of  the  sequence $\{v_{i,n}\}_{n\in\mathbb{N}}$ at out disposal.  This, for example, allows to directly deal  with the following situation.
For $ 1 \le i \le 2$, let $ \psi_i: \mathbb{R}^+\to\mathbb{R}^+ :  x \to  \psi_i(x) :=  x^{-\tau_i}(\log x)^{-(1+\tau_i)}  $ where $ \tau_i > 0$, and consider the corresponding set $W_2(\Psi)$.  Now with reference to   Theorem~\ref{pointcase}, for $n \in \N$  let  $r_n = n^{-1} $,  $v_{i,n} = (1+\tau_i)(1+(\log \log n)/\log n))$ and $v_i = 1+\tau_i$.    For convenience assume that $\tau_1\ge \tau_2$  and  $\tau_1+\tau_2>1$.  Then, Theorem~\ref{pointcase} implies that \begin{equation}  \label{lkj}
\dim_{\rm H}W_2(\Psi)  \ge \min\left\{\frac{3+\tau_1-\tau_2}{1+\tau_1}, \frac{3}{1+\tau_2}\right\} \, .  \end{equation}
Clearly, the (bounded)  Mass Transference Principle  of  Wang \& Wu is not applicable (at least directly)  since the exponents $v_{i,n}$ are not constant. Indeed, for the same reason, even when $\tau_1=\tau_2$ so that $ W_2(\Psi) $ is a  $\limsup$ of balls, the original Mass Transference Principle  (Theorem~BV) is not applicable.

\end{remark}

\bigskip

\begin{remark}  \label{WwwwwWsv}
It is worth pointing  out that in the bounded case our more general theorem  can in fact  be easily deduced from the
 theorem of Wang \& Wu (that is, the statement of Theorem~\ref{pointcase} in which \eqref{const_vi} is assumed).
To see this, assume that we are within the framework of Theorem~\ref{pointcase}  with  the additional  assumption \eqref{finite_vi}.
Choose and fix $\epsilon>0$    and let
$$
\mathbf{\overline{v}}= (\overline{v}_1,\dots,\overline{v}_p) := (1+\epsilon)\mathbf{v}.
$$
In view of \eqref{cond:r-n}, for $n$ sufficiently large we have that
\begin{equation} \label{v_in_is_small}
v_{i,n} \leq \overline{v}_i \quad\text{ for each } \ i=1,\dots,p.
\end{equation}
 Also, note  that in view of~\eqref{r_n_tends_to_zero}, for $n$ sufficiently large we have that
\begin{equation} \label{r_n_less_than_one}
r_n<1  \, .
\end{equation}
Assume for the moment that \eqref{v_in_is_small} and ~\eqref{r_n_less_than_one} hold for all $n\in\N$ rather than just for all but finitely many indices.  Then,  inequality~\eqref{v_in_is_small} implies that
\begin{equation*} \label{ie_s_in}
s_{i,n}=\frac{u_i\delta_i}{v_{i,n}}\geq\frac{u_i\delta_i}{\overline{v}_i}
=:\overline{s}_{i} \quad\text{ for each } \ i=1,\dots,p \ \text{ and }  \ \forall \ n \in \N .
\end{equation*}

\noindent Now denote by $\tilde{B}_{i,n}$  the ball with the same centre as $B_{i,n}$ and radius $r(\tilde{B}_{i,n})=r_n^{\overline{v}_i}$.
Then, by definition (see~\eqref{B_in_s_in}) we have that
\begin{equation*} \label{ie_s_inball}
B_{i,n}^{s_{i,n}}=\tilde{B}_{i,n}^{\overline{s}_i} \quad\text{ for each } \ i=1,\dots,p \ \text{ and }  \ \forall \ n \in \N .
\end{equation*}
 The upshot of this is that we can apply Theorem~\ref{pointcase} with 
the same vector $\mathbf{u}$  but with $v_{i,n}$ replaced by the finite constant sequence $v_{i,n}=\overline{v}_i$ for all $i=1,\dots,p$ and $n\in\N$  (i.e. we are within the setup of the theorem of Wang \& Wu).  On applying the theorem we conclude that
\begin{equation*} \label{BB1}
\dim_{\rm H}\left(\limsup_{n\to\infty}\prod_{i=1}^p\tilde{B}_{i,n} \right) \geq s_0(\mathbf{u},\mathbf{\overline{v}}).
\end{equation*}
In turn, together with the fact that inequalities~\eqref{v_in_is_small} and ~\eqref{r_n_less_than_one} imply that $\tilde{B}_{i,n}\subset B_{i,n}$ for every $i=1,\dots,p$ and all $n\in\N$, it follows that   for any $\epsilon >0$
\begin{equation} \label{BB3}
\dim_{\rm H}\left(\limsup_{n\to\infty} {\prod_{i=1}^p}B_{i,n}\right) \ \geq \ \dim_{\rm H}\left(\limsup_{n\to\infty} {\prod_{i=1}^p}\tilde{B}_{i,n} \right)
\ \geq    \
s_0(\mathbf{u}, (1+\epsilon)\mathbf{v})  \, .
\end{equation}
The desired statement follows from \eqref{BB3} by  making use of \eqref{fufu}.  Finally,  observe that the  $\limsup$ sets under consideration remain unchanged if we drop a finite set of indices $n$ and so, in the above argument,  we can indeed assume that \eqref{v_in_is_small} and~\eqref{r_n_less_than_one} hold for all $n\in\N$.
\end{remark}
\bigskip

\begin{remark} \label{111_to_222S}  It is convenient to note at this point that when it comes to proving Theorem~\ref{pointcase},
we can assume without loss of generality that
\begin{equation} \label{222S} 
u_i\ < v_i  \quad  \ {\text{for each}}\ \ 1\le i\le p  \, .
\end{equation}
Recall that these inequalities being strict is  also partly the subject  of Remark~\ref{rem666}.
To see that \eqref{222S} suffices, assume that we are within the framework of Theorem~\ref{pointcase} with $\mathbf{u}$ and $\mathbf{v}$ satisfying~\eqref{111}. Let $\epsilon>0$ be arbitrary. Then the vectors $(1-\epsilon)\mathbf{u}$ and $\mathbf{v}$ satisfy~\eqref{222S} and it is easily seen via \eqref{fullmeasure2} that if the   full measure condition  \eqref{fullmeasure1} holds for $\mathbf{u}$ and $\mathbf{v}$  then it also holds with $\mathbf{u}$ replace by $(1-\epsilon)\mathbf{u}$.  Thus,  Theorem~\ref{pointcase} implies  that
\begin{equation} \label{BB4}
\dim_{\rm H}\left(\limsup_{n\to\infty} {\prod_{i=1}^p}B_{i,n}\right) \geq s_0((1-\epsilon)\mathbf{u}, \mathbf{v}).
\end{equation}
In view of  \eqref{cucu}
\[
s_0((1-\epsilon)\mathbf{u}, \mathbf{v})=s_0\left(\mathbf{u}, \textstyle{\frac{1}{(1-\epsilon)}}\mathbf{v}\right)    \, ,
\]
 and so the desired statement \eqref{pointcase_result} follows  from \eqref{BB4} by  making use of \eqref{fufu}.
\end{remark}
\bigskip

\begin{remark} \label{NoToHM}   With the original Mass Transference Principle in mind (Theorem~BV), we note that  it is not possible to obtain  an analogue of  Theorem~\ref{pointcase} for  Hausdorff measures without imposing stronger assumptions. The reason for this is simple.  Even though the radii of the balls $B_{i,n}$ depend on the sequence $v_{i,n}$, the Hausdorff dimension $s_0$ of the associated $\limsup$ set depends only on its limiting behaviour $v_i$. However, as the following situation shows the $s_0$-Hausdorff measure can be drastically effected by the particular  sequence $v_{i,n}$ under consideration.  For  $\tau > 1$, let $ \psi: \mathbb{R}^+\to\mathbb{R}^+ :  x \mapsto  \psi(x) :=  x^{-\tau}(\log x)^{-(1+\tau)}  $ and consider the corresponding  sets $W_1(\tau)$  and  $W_1(\Psi)$.  With reference to Theorem~\ref{pointcase}, the first of these  is  a $\limsup$ set of balls  with  $r_n = n^{-1} $ and  $v_{1,n} =1+ \tau $ while the second is  a $\limsup$ set of balls  with  $r_n = n^{-1} $ and  $v_{1,n} = (1+\tau)(1+(\log \log n)/\log n))$.  In both cases, $ v_1= \lim_{n \to \infty} v_{1,n} =   1+\tau$ and so Theorem~\ref{pointcase} implies the lower bound (the hard part)  in  following dimension statement:
$$
\dim_{\rm H}W_{1}(\tau) = \dim_{\rm H}W_{1}(\psi) = s_0 :=  \frac{2}{1+ \tau} \, .
$$
Regarding the $s_0$-dimensional Hausdorff measure,  it follows from Jarn\'{\i}k's Theorem  \cite[Theorem 1.3.4]{durham} that
$$
\mathcal{H}^{s_0} \big( W_{1}(\tau) \big) = \infty    \qquad   {\rm and }    \qquad \mathcal{H}^{s_0} \big( W_{1}(\psi) \big) = 0  \, .
$$
In fact, the first of these statements can easily be deduced from Theorem~BV while the other can be easily deduced via a simple covering argument.
\end{remark}

\smallskip

\begin{remark} \label{NoToHMbut}
As mentioned in  Remark~\ref{Yubin}, recently He \cite{He} has established a `rectangular to rectangular'  Mass Transference Principle from the view point of Hausdorff content. In short,
he `essentially' shows  \cite[Theorem 2.9]{He} that if we  replace the full measure condition \eqref{fullmeasure1}	 by the stronger assumption of uniform local ubiquity (see \cite[Definition 1.7]{He}), then information regarding  the Hausdorff content of `blown up'  $\limsup$ sets implies `full'  Hausdorff measure statements (as well as the  large intersection property) for the original sets.   We say `essentially' since the full measure part of the statement also requires that the  implicit  sequences corresponding to  $s_0(\mathbf{u},\mathbf{v}(n))$ are  non-increasing.
Clearly, such an assumption is necessary as shown by the analysis in Remark~\ref{NoToHM} concerning the $\mathcal{H}^{s_0}$-measure of set $W_1(\psi)$  --  the sequence  $s_0(\mathbf{u},\mathbf{v}(n))=2/v_{1,n}$ is increasing.
Without the  non-increasing hypothesis, \cite[Theorem 2.9]{He} yields a dimension result only akin to our Theorem~\ref{pointcase} but under the significantly stronger assumption of uniform local ubiquity. Indeed, it is worth emphasising that  under the weaker  full measure condition \eqref{fullmeasure1} of Theorem~\ref{pointcase},   He obtains a dimension statement  \cite[Theorem 2.10]{He} but only within the bounded setup of Wang \& Wu (see Remark~\ref{WW rem}). The upshot is that under \eqref{fullmeasure1}, the results in \cite{He} cannot be applied to obtain statements such as Theorem~\ref{matrixEG} or   \eqref{easy} within the unbounded setup or indeed \eqref{lkj} within the bounded setup.


\end{remark}

%
%

\subsection{Proof of Theorem~\ref{Runbounded} modulo  Theorem~\ref{pointcase} \label{classicalproof} }

 The proof of Theorem~\ref{Runbounded} will involve  establishing  the upper bound (Proposition \ref{upperbound-unbdd-d} below) and lower bound (Proposition \ref{lowerbound-d} below) for $ \dim_{\rm H} W_d(\Psi) $ separately.

\subsubsection{The  upper bound  }

The goal is to prove the following statement.

\begin{proposition}\label{upperbound-unbdd-d}

For  $1\leq i\leq d$, let  $\psi_i: \mathbb{R}^+\to\mathbb{R}^+$ be a real, positive, non-increasing function.
 Then
  $$\dim_{\rm H} W_d(\Psi) \leq
  \sup_{\mathbf{t}\in\UP}  \min \big\{ \min_{  i \in \mathcal{L}(\mathbf{t}) }\zeta_i(\mathbf{t}),\ \#\mathcal{L}(\mathbf{t})\big\},$$
	where given $\mathbf{t}=(t_1,\dots,t_d) \in (\R^+\cup \{+\infty\})^d$  we set  $\mathcal{L}(\mathbf{t}):=\{1\leq i\leq d: t_i<+\infty\}$.		
\end{proposition}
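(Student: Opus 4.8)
The plan is to establish the equivalent statement that $\mathcal{H}^s\big(W_d(\Psi)\big)=0$ for every $s>s^{*}:=\sup_{\mathbf{t}\in\mathcal{U}(\Psi)}\min\{\min_{i\in\mathcal{L}(\mathbf{t})}\zeta_i(\mathbf{t}),\,\#\mathcal{L}(\mathbf{t})\}$, which immediately yields $\dim_{\rm H}W_d(\Psi)\le s^{*}$. Since $W_d(\Psi)\subseteq\I^d$ we may assume $s^{*}<d$, as the case $s^{*}=d$ is trivial. For $q\in\N$ put $A_q:=\{\mathbf{x}\in\I^d:\|qx_i\|\le\psi_i(q)\ (1\le i\le d)\}$; this is the product over $i$ of a union of $\asymp q$ intervals of length $2\psi_i(q)/q$ centred at the rationals $p_i/q$, and $W_d(\Psi)=\limsup_{q\to\infty}A_q$, so $W_d(\Psi)\subseteq\bigcup_{q\ge Q}A_q$ for every $Q$. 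The only genuine nuisance in setting up the cover is that the point $\mathbf{t}(q):=\big(-\tfrac{\log\psi_1(q)}{\log q},\dots,-\tfrac{\log\psi_d(q)}{\log q}\big)$ need not converge; but it accumulates only on the compact set $\mathcal{U}(\Psi)\subseteq[0,+\infty]^d$. So, for a small parameter $\epsilon>0$ (to be fixed in terms of $s,s^{*},d$), I would cover $\mathcal{U}(\Psi)$ by finitely many open neighbourhoods $U_1,\dots,U_M$, where $U_j$ is centred at $\mathbf{t}^{(j)}\in\mathcal{U}(\Psi)$ and consists of those $\mathbf{s}$ with $|s_i-t_i^{(j)}|<\epsilon$ for $i\in\mathcal{L}(\mathbf{t}^{(j)})$ and $s_i>1/\epsilon$ for $i\notin\mathcal{L}(\mathbf{t}^{(j)})$; a routine compactness argument then produces $Q_0$ with $\mathbf{t}(q)\in\bigcup_jU_j$ for all $q\ge Q_0$, and I assign each such $q$ to one class $j(q)$.

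The core of the argument is, for each class $j$ and each $q$ with $j(q)=j$, to cover $A_q$ by balls of a well-chosen radius $\rho_q$ and to estimate the cost $N(\rho_q,q)\,\rho_q^{\,s}$, where $N(\rho,q)$ — the number of $\rho$-balls needed to cover $A_q$ — factorises as the product over $i$ of the number of $\rho$-balls needed to cover the $i$-th coordinate set. Two cases arise according to which term realises $\min\{\min_{i\in\mathcal{L}(\mathbf{t}^{(j)})}\zeta_i(\mathbf{t}^{(j)}),\,\#\mathcal{L}(\mathbf{t}^{(j)})\}$. In \emph{Case~A}, $\min_{i\in\mathcal{L}(\mathbf{t}^{(j)})}\zeta_i(\mathbf{t}^{(j)})\le\#\mathcal{L}(\mathbf{t}^{(j)})$: pick $i_0\in\mathcal{L}(\mathbf{t}^{(j)})$ realising this minimum and take $\rho_q=\psi_{i_0}(q)/q$. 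A direction-by-direction count (direction $i_0$ costs $\asymp q$ balls, each ``thinner'' direction — including every $i$ with $t_i^{(j)}=+\infty$ — costs $\asymp q$ balls, each ``wider'' direction $i$ costs $\asymp q\psi_i(q)/\psi_{i_0}(q)$ balls) gives $N(\rho_q,q)\asymp q^{d}\prod_{i:\psi_i(q)>\psi_{i_0}(q)}\psi_i(q)/\psi_{i_0}(q)$, whence, using $\psi_i(q)=q^{-t_i(q)}$ together with $|t_i(q)-t_i^{(j)}|<\epsilon$ on the finite coordinates, the cost is at most $q^{(1+t_{i_0}^{(j)})(\zeta_{i_0}(\mathbf{t}^{(j)})-s)-1+C(d,s)\epsilon}$; since $\zeta_{i_0}(\mathbf{t}^{(j)})\le s^{*}<s$ and $1+t_{i_0}^{(j)}\ge1$, for $\epsilon$ small this exponent is $\le-1-\eta$ with $\eta:=\tfrac12(s-s^{*})>0$. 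In \emph{Case~B}, $\#\mathcal{L}(\mathbf{t}^{(j)})=:L_j<\min_{i\in\mathcal{L}(\mathbf{t}^{(j)})}\zeta_i(\mathbf{t}^{(j)})$, so $L_j\le s^{*}<d$ and $\mathcal{L}(\mathbf{t}^{(j)})^{c}\ne\emptyset$: here I would instead cover at the far smaller scale $\rho_q=\max_{i\notin\mathcal{L}(\mathbf{t}^{(j)})}\psi_i(q)/q$, i.e. using a side length coming from an \emph{unbounded} direction. Then the $d-L_j$ unbounded directions cost $\asymp q$ balls each and each of the $L_j$ finite directions costs $\asymp q\psi_i(q)/\rho_q$ balls, so $N(\rho_q,q)\asymp q^{d}\rho_q^{-L_j}\prod_{i\in\mathcal{L}(\mathbf{t}^{(j)})}\psi_i(q)$ and the cost is $\asymp q^{d-s}\rho_q^{\,s-L_j}\prod_{i\in\mathcal{L}(\mathbf{t}^{(j)})}\psi_i(q)\le q^{\,d-s-(s-L_j)/\epsilon}$, because every unbounded direction has $\psi_i(q)<q^{-1/\epsilon}$ on the class $U_j$; as $s-L_j\ge s-s^{*}>0$, for $\epsilon$ small this is $\le q^{-2}$.

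Finally I would assemble everything: for any $\rho_0>0$ choose $Q\ge Q_0$ large enough that $\rho_q\le\rho_0$ for all $q\ge Q$ (legitimate since $\rho_q\to0$ in both cases), so that $W_d(\Psi)\subseteq\bigcup_{q\ge Q}A_q$ is covered by the resulting balls, all of radius $\le\rho_0$, with total $s$-cost at most $\sum_{q\ge Q}(\text{cost for }q)\le C(d)\sum_{q\ge Q}q^{-1-\eta}\to0$ as $Q\to\infty$; hence $\mathcal{H}^s_{\rho_0}(W_d(\Psi))=0$ for every $\rho_0$, so $\mathcal{H}^s(W_d(\Psi))=0$, and letting $s\downarrow s^{*}$ completes the proof. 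Note that monotonicity of the $\psi_i$ is not actually used for this upper bound. I expect the main obstacle to be, first, organising the non-convergence of $\mathbf{t}(q)$ via the compactness device above — this forces one to work in $[0,+\infty]^d$ and to design the neighbourhoods $U_j$ so that both the finite and the infinite coordinates are controlled — and, more substantively, pinpointing the correct covering scale in Case~B, namely a side length of an \emph{unbounded} direction rather than a finite one; this choice (which has no analogue in the bounded Wang--Wu setting) is exactly what lets the super‑polynomially small $\psi_i$ absorb the loss and produce the sharp bound $\dim_{\rm H}W_d(\Psi)\le\#\mathcal{L}(\mathbf{t})$. The remaining bookkeeping with the $\epsilon$-slop terms is routine provided $\epsilon=\epsilon(s,s^{*},d)$ is fixed once and for all at the outset.
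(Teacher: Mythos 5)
Your strategy is genuinely different from the paper's. The paper builds no covers at all: it truncates, setting $\widetilde{\psi}_i(q):=\max\{\psi_i(q),q^{-M}\}$ with $(d+1)/(1+M)<\epsilon$, notes $W_d(\Psi)\subset W_d(\widetilde{\Psi})$, applies the bounded Theorem~WW to $\widetilde{\Psi}$ (whose exponent set is bounded by $M$), and then shows that every $\widetilde{\mathbf{t}}\in\mathcal{U}(\widetilde{\Psi})$ has a companion $\mathbf{t}\in\mathcal{U}(\Psi)$ with $\zeta_i(\widetilde{\mathbf{t}})<\zeta_i(\mathbf{t})+\epsilon$ for every $i$, before letting $\epsilon\to0$. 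Your direct covering/Hausdorff-content argument is self-contained (it in effect reproves the upper-bound half of Theorem~WW instead of quoting it), and your choice in Case~B of covering at the scale of an \emph{unbounded} direction is exactly the right mechanism for producing the bound $\#\mathcal{L}(\mathbf{t})$; the compactness device for the non-convergence of $\mathbf{t}(q)$ is also sound in principle.

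There is, however, one concrete gap: the design of the classes $U_j$. You fix $\epsilon=\epsilon(s,s^*,d)$ at the outset and constrain the unbounded coordinates only by $s_i>1/\epsilon$, but the finite coordinates of the centres $\mathbf{t}^{(j)}$ --- which are produced by the compactness argument \emph{after} $\epsilon$ is chosen --- may exceed $1/\epsilon$. Then the Case~A claim that every direction with $t_i^{(j)}=+\infty$ is ``thinner'' than $i_0$ at scale $q$ can fail, and with it the exponent estimate. Concretely, take $d=2$ and suppose $\mathcal{U}(\Psi)$ contains both $(T,+\infty)$ and $(T,c)$ with $1/\epsilon<c\ll T$; the infinitely many $q$ with $\mathbf{t}(q)\approx(T,c)$ satisfy the membership condition of the class centred at $(T,+\infty)$ and nothing in your argument prevents assigning them there. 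In that class you are in Case~A with $i_0=1$ and $\rho_q=\psi_1(q)/q\approx q^{-1-T}$, so the cost is $\asymp q^{2+(T-c)-s(1+T)}$, whose exponent is large and positive for $s$ close to $s^*$ (here $s^*\le 3/(1+c)<3\epsilon$), and the series diverges. The repair is local: make the threshold for the unbounded coordinates depend on the centre, e.g.\ require $s_i>\max\{1/\epsilon,\,1+\max_{k\in\mathcal{L}(\mathbf{t}^{(j)})}t_k^{(j)}\}$ for $i\notin\mathcal{L}(\mathbf{t}^{(j)})$; the sets remain open neighbourhoods, the compactness step is unchanged, Case~A regains the property that unbounded directions are thinner than $i_0$, and Case~B still has $\psi_i(q)<q^{-1/\epsilon}$. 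Two further slips in Case~B are harmless (they only overestimate): the count for a finite direction is $\asymp\psi_i(q)/\rho_q$, not $q\psi_i(q)/\rho_q$, and the resulting cost is $\asymp q^{d-L_j}\rho_q^{\,s-L_j}\prod_{i\in\mathcal{L}(\mathbf{t}^{(j)})}\psi_i(q)$, which still decays like a negative power of $q$ once $\epsilon$ is small.
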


	\begin{proof}
%
When $\UP$ is bounded, by definition  we have that $\#\mathcal{L}(\mathbf{t}) = d $ and so the  desired statement corresponds to the result of Wang \& Wu (\S\ref{int}, Theorem~WW).  Thus, without loss of generality,  assume that $\UP$ is unbounded.

For any $\epsilon>0$, let $M>0$ be sufficiently large so that
\begin{align}\label{epsilon-M}
\frac{d+1}{1+M}<\epsilon.
\end{align}
Let $
\widetilde{\Psi}:=(\widetilde{\psi}_i)_{1\leq i \leq d}  \ $ where $$  \widetilde{\psi}_i(q):=\max\{\psi_i(q), \ q^{-M}\} \ \ \qquad  (1\le i\le d)
$$ and define $W_d(\widetilde{\Psi})$ and $\mathcal{U}(\widetilde{\Psi})$ accordingly.  Then, by definition
$
W_d(\Psi) \subset W_d(\widetilde{\Psi})
$ and in turn, since $\mathcal{U}(\widetilde{\Psi})$ is bounded,  it follows via Theorem~WW that \begin{equation}\label{f5}
\dim_{\rm H} W_d(\Psi)\leq  \dim_{\rm H} W_d(\widetilde{\Psi})= \sup_{{\widetilde{\bold{t}}}\in\mathcal{U}(\widetilde{\Psi})} \min \left\{\min_{1\le i\le d} {\zeta}_i({\widetilde{\bold{t}}}), \, d\right\}.
\end{equation}

\noindent Now, let $\widetilde{\bold{t}}\in \mathcal{U}(\widetilde{\Psi})$ be a point such that
\begin{equation}\label{special-t}
\min \left\{\min_{1\le i\le d} {\zeta}_i(\widetilde{\bold{t}}), \, d\right\}>\sup_{{\widetilde{\bold{t}}}\in\mathcal{U}(\widetilde{\Psi})} \min \left\{\min_{1\le i\le d} {\zeta}_i({\widetilde{\bold{t}}}), \, d\right\}-\epsilon  \, .
\end{equation}
 Note that for any $\widetilde{\bold{t}}\in \mathcal{U}(\widetilde{\Psi})$,  there exists an unbounded integer sequence $\{q_k\}_{k\ge 1}$ such that
$$
\lim_{k\to\infty}\frac{-\log \widetilde{\psi}_i(q_k)}{\log q_k}=\widetilde{t}_i  \qquad \forall \ 1\le i\le d
$$
By the definition of $\widetilde{\Psi}$,  we clearly  have that $\widetilde{t}_i   \le M $ for each  $1\le i\le d$.   With this in mind, let  $\mathcal{L}_M(\widetilde{\bold{t}}):=\{1\le i\le d: \widetilde{t}_i<M\}$. In turn,
 let $t_i=\widetilde{t}_i$ for $i\in \mathcal{L}_M(\widetilde{\bold{t}})$ and for $i\not\in \mathcal{L}_M(\widetilde{\bold{t}})$, let $$t_i=\lim_{j\to\infty}\frac{-\log {\psi}_i(q_{k_j})}{\log q_{k_j}},$$
for some subsequence $q_{k_j}$ of $\{q_k\}_{k\ge 1}$.
Then, it is easy to check that  ${\bold{t}}:=({t}_1,\cdots, {t}_d)\in \mathcal{U}(\Psi)$. Moreover, by the definitions of $\widetilde{\psi}_i$ and $\mathcal{L}_M(\widetilde{\bold{t}})$, one has that
\begin{align}\label{relation-t-i}
{t}_i=\widetilde{t}_i<M   \ \  {\text{for}}\ i\in \mathcal{L}_M(\widetilde{\bold{t}})    \quad \text{and} \quad   {t}_i\ge \widetilde{t}_i=M   \ \ {\text{for}}\ i\not\in \mathcal{L}_M(\widetilde{\bold{t}}).\end{align}

\noindent We now show that
\begin{equation}\label{two-xi}
 {\zeta}_i(\widetilde{\bold{t}})  < {\zeta}_i({\bold{t}})+\epsilon    \qquad   \forall \ 1\le i\le d  \, .
\end{equation}
To this end, first note that $\mathcal{L}_M(\widetilde{\bold{t}})\subset \mathcal{L}({\bold{t}})$ and that for $i\not\in \mathcal{L}_M(\widetilde{\bold{t}})$, by \eqref{relation-t-i} and \eqref{epsilon-M}, we have
\begin{align}\label{compare-zeta}
\begin{split}
\zeta_i(\widetilde{\bold{t}})&=\frac{d+1+\sum_{k: \widetilde{t}_k<\widetilde{t}_i}(\widetilde{t}_i-\widetilde{t}_k)}{1+\widetilde{t}_i}=\frac{d+1+\sum_{k\in \mathcal{L}_M(\widetilde{\bold{t}})}(M-t_k)}{1+M}\\
&\le \# \mathcal{L}_M(\widetilde{\bold{t}})-\frac{\sum_{k\in \mathcal{L}_M(\widetilde{\bold{t}})}t_k}{1+M}+\epsilon.
\end{split}
\end{align}
 To continue, we consider the following three cases:
\begin{itemize}
  \item For $i\in \mathcal{L}_M(\widetilde{\bold{t}})$, it is clear that $\zeta_i(\widetilde{{\bold{t}}})=\zeta_i({\bold{t}})$ since both of them are only related to the components in $\mathcal{L}_M(\widetilde{\bold{t}})$ which are equal for  $\widetilde{\bold{t}}$ and ${\bold{t}}$.
  \item For $i\in \mathcal{L}({\bold{t}})\setminus \mathcal{L}_M(\widetilde{\bold{t}})$, by \eqref{relation-t-i} and \eqref{compare-zeta},
  \begin{align*}
    \zeta_i({\bold{t}})&=\frac{d+1+\sum_{k: {t}_k<{t}_i}({t}_i-{t}_k)}{1+{t}_i}\ge \frac{d+1+\sum_{k\in \mathcal{L}_M(\widetilde{\bold{t}})}({t}_i-{t}_k)}{1+{t}_i}\\
    &\ge \# \mathcal{L}_M(\widetilde{\bold{t}})-\frac{\sum_{k\in \mathcal{L}_M(\widetilde{\bold{t}})}t_k}{1+M}\ge \zeta_i(\widetilde{\bold{t}})-\epsilon.
  \end{align*}
  \item For $i\not\in \mathcal{L}({\bold{t}})$, it is clear that $i\not\in \mathcal{L}_M(\widetilde{\bold{t}})$ and so  by definition (see Remark~\ref{def2024}) and \eqref{compare-zeta},
  $$
  \zeta_i({\bold{t}})=\# \mathcal{L}({\bold{t}})\ge \# \mathcal{L}_M(\widetilde{\bold{t}})\ge \zeta_i(\widetilde{\bold{t}})-\epsilon.
  $$
\end{itemize}
Together, these three cases imply  \eqref{two-xi} and so it  follows that for any $\widetilde{\bold{t}}\in \mathcal{U}(\widetilde{\Psi})$ there exists a corresponding ${\bold{t}} \in \mathcal{U}(\Psi)$ such that
$$
 \min \left\{\min_{1\le i\le d} {\zeta}_i(\widetilde{\bold{t}}), \, d\right\} \ < \   \min \left\{\min_{1\le i\le d} {\zeta}_i({\bold{t}}), \, d\right\}+\epsilon.$$
In particular,  on combining with \eqref{f5} and  \eqref{special-t} we find  that
\[
\dim_{\rm H} W_d(\Psi)\ \leq \ \min \left\{\min_{1\le i\le d} {\zeta}_i({\bold{t}}), \, d\right\}+2\epsilon  \ \leq  \  \sup_{\mathbf{t}\in\mathcal{U}({\Psi})}\min \left\{\min_{1\le i\le d} {\zeta}_i({\bold{t}}), \, d\right\}+2\epsilon  \, .
\]
Now $ \epsilon > 0$ is arbitrary and  so the desired statement follows on letting $\epsilon \to 0$ and noting  that by definition (see Remark~\ref{def2024}), in the unbounded case for any $ \mathbf{t}\in\mathcal{U}({\Psi}) $ with $\#\mathcal{L}(\mathbf{t}) \neq  d $ we have  that
$$ { \min } \left\{ \min_{1\le   i\le   d}\zeta_i(\mathbf{t})   { , d }  \right\} \  =  \   \min_{  1 \le i \le d } \zeta_i(\mathbf{t})    \ = \ { \min } \left\{ \min_{1\le   i\le   d}\zeta_i(\mathbf{t})   { , \#\mathcal{L}(\mathbf{t}) }  \right\}   \, .
$$

\medskip

We conclude by mentioning that the above argument  is equally valid if  $\mathcal{L}_M(\widetilde{\bold{t}}) = \emptyset$ for all  $M > 0$ sufficiently large. Indeed, in this case \eqref{compare-zeta} becomes $\zeta_i(\widetilde{\bold{t}})\leq \epsilon$ for all $1\leq i \leq d$.

\bigskip

\end{proof}

\subsubsection{The  lower bound  modulo  Theorem~\ref{pointcase} \label{lowerbound-th1-1}  }

To prove the complementary lower bound for  $\dim_{\rm H} W_d(\Psi)$  we will  make use of the following  full measure result.  The proof will be given at the end of this section.

\begin{lemma}\label{ll2}
Let $d \in \N$ and   $\mathbf{a} =({a}_1,\cdots,{a}_d)  \in (\R^+)^d  $ such that  $$
a_i \ge 1   \quad  (1\le i\le d)  \qquad   { and } \qquad
{a}_1+{a}_2+\cdots+{a}_d= d+1.
$$  Let  $M= 4^{d+1}$  and $\widetilde{M}=\max_{1\leq i \leq d}M^{1/a_i}$. Then, for any sequence $\{q_\ell\}_{\ell\ge 1}$ of integers, the limsup set $$W:=\limsup_{\ell\to \infty}E_\ell$$ with
$$
E_\ell:=\bigcup_{\frac{1}{M}q_\ell\le q \le q_\ell}\ \bigcup_{0\le p_1,\cdots, p_d  \le q}\ \prod_{i=1}^d B\left(\frac{p_i}{q},  \left(\frac{\widetilde{M}}{q_\ell}\right)^{a_i}\right)
$$ is of full Lebesgue measure.
\end{lemma}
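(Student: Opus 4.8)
The plan is to prove $m_d(W)=1$ by establishing a uniform \emph{local} lower bound $m_d(W\cap B)\gtrsim_d m_d(B)$ valid for every ball $B\subseteq\I^d$, and then invoking the Lebesgue density theorem. Write $\rho_{i,\ell}:=(\widetilde{M}/q_\ell)^{a_i}$, $U_{i,\ell}(q):=\bigcup_{0\le p\le q}B(p/q,\rho_{i,\ell})$ and $G_\ell(q):=\prod_{i=1}^{d}U_{i,\ell}(q)$, so that $E_\ell=\bigcup_{q_\ell/M\le q\le q_\ell}G_\ell(q)$. We may assume $q_\ell\to\infty$ (otherwise pass to a subsequence; a bounded sequence is handled by the observation that $E_\ell=\I^d$ once $q_\ell$ drops below a threshold depending only on $d$). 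If $a_i=1$ then $\widetilde{M}=M$ and $U_{i,\ell}(q)\supseteq[0,1]$ for every $q$ in the window, so only the coordinates in $\mathcal{A}:=\{i:a_i>1\}$ are active; put $k:=\#\mathcal{A}$ and note that $k\ge1$ and $\sum_{i\in\mathcal{A}}a_i=(d+1)-(d-k)=k+1$, both being consequences of the hypothesis $\sum_i a_i=d+1$.

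The first and main step, the \emph{local ubiquity} of these rectangles at a single level, is to produce a constant $c=c(d)>0$ such that $m_d(E_\ell\cap B)\ge c\,m_d(B)$ for every ball $B\subseteq\I^d$ and every $\ell$ with $q_\ell\,r(B)$ large in terms of $d$. For such $\ell$ the radii $\rho_{i,\ell}$ ($i\in\mathcal{A}$) are $\ll 1/q$ throughout the window, so $m_d(G_\ell(q))\asymp_d q^{k}\prod_{i\in\mathcal{A}}\rho_{i,\ell}=q^{k}(\widetilde{M}/q_\ell)^{k+1}$, and summing over $q\in[q_\ell/M,q_\ell]$ and using $\sum_{i\in\mathcal{A}}a_i=k+1$ yields $\sum_{q}m_d(G_\ell(q))\asymp_d\widetilde{M}^{\,k+1}\asymp_d 1$ --- the value $M=4^{d+1}$ being chosen precisely so that the implied lower bound is comfortably positive. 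Let $N_\ell(x):=\#\{q\in[q_\ell/M,q_\ell]:x\in G_\ell(q)\}$, so that $E_\ell=\{N_\ell\ge1\}$; since each $G_\ell(q)$ is $1/q$-periodic coordinatewise, $\int_B N_\ell\asymp_d m_d(B)$ once $q_\ell r(B)$ is large. For the second moment one has $\int_B N_\ell^2=\int_B N_\ell+\sum_{q\ne q'}m_d(G_\ell(q)\cap G_\ell(q')\cap B)$, and the elementary one-dimensional estimate $m_1(U_{i,\ell}(q)\cap U_{i,\ell}(q'))\lesssim m_1(U_{i,\ell}(q))\,m_1(U_{i,\ell}(q'))+\gcd(q,q')\,\rho_{i,\ell}$, together with the inclusion $G_\ell(\gcd(q,q'))\subseteq G_\ell(q)\cap G_\ell(q')$, gives $\sum_{q\ne q'}m_d(G_\ell(q)\cap G_\ell(q')\cap B)\le C(d)\bigl(\int_B N_\ell\bigr)^2+m_d(B)\cdot O_d\bigl(q_\ell^{\,a^*-2}\log q_\ell\bigr)$ with $a^*:=\max_{i\in\mathcal{A}}a_i$. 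If $k\ge2$ then $a^*<2$ (an exponent equal to $2$ forces every other $a_i$ to equal $1$, i.e.\ $k=1$), so the error is $o(1)$, whence $\int_B N_\ell^2\asymp_d m_d(B)$ and the Paley--Zygmund inequality gives $m_d(E_\ell\cap B)\ge\tfrac14(\int_B N_\ell)^2/\int_B N_\ell^2\ge c(d)\,m_d(B)$. If $k=1$, say $\mathcal{A}=\{i_0\}$ with $a_{i_0}=2$, then $E_\ell=\bigl(\bigcup_q U_{i_0,\ell}(q)\bigr)\times\I^{d-1}$ and the claim reduces to the classical local ubiquity of the rationals in one dimension: a point whose largest convergent denominator not exceeding $q_\ell$ already lies in the window is captured, and the exceptional set --- on which some partial quotient of the point straddles the window $[q_\ell/M,q_\ell]$ --- has relative measure bounded away from $1$ uniformly in $q_\ell$ by the mixing of the Gauss map, once more thanks to the choice of $M$.

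For the second step, since $q_\ell\to\infty$ we may pass to a subsequence $(\ell_m)$ with $q_{\ell_{m+1}}\ge q_{\ell_m}^{\,3}$. At this separation every level-$\ell_{m+1}$ rectangle sits at a far finer scale than the level-$\ell_m$ ones, so $m_d(E_{\ell_j}\cap E_{\ell_m}\cap B)\le C(d)\,m_d(E_{\ell_j}\cap B)\,m_d(E_{\ell_m}\cap B)/m_d(B)$ for $j<m$ with $q_{\ell_j}\,r(B)$ large. Together with $\sum_m m_d(E_{\ell_m}\cap B)=\infty$ from the first step, the quantitative divergence Borel--Cantelli lemma (if $S_K:=\sum_{m\le K}m_d(E_{\ell_m}\cap B)\to\infty$ and $\sum_{j,m\le K}m_d(E_{\ell_j}\cap E_{\ell_m}\cap B)\le S_K+(C(d)/m_d(B))\,S_K^2$, then $m_d(W\cap B)\ge\limsup_K S_K^2\big/\sum_{j,m}m_d(\,\cdot\,)\ge m_d(B)/C(d)$) gives $m_d(W\cap B)\ge c'(d)\,m_d(B)$ for every ball $B\subseteq\I^d$. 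If $m_d(\I^d\setminus W)>0$ then a Lebesgue density point of $\I^d\setminus W$ lies in a small ball $B$ with $m_d(W\cap B)<c'(d)\,m_d(B)$, a contradiction; hence $m_d(W)=m_d(\I^d)$, as required.

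The main obstacle is the first step --- controlling the $\gcd$-driven correlations in the second moment in the range $k\ge2$, and the one-dimensional ubiquity statement when $k=1$. It is exactly here that the hypotheses $a_i\ge1$ and $\sum_i a_i=d+1$, and the explicit constant $M=4^{d+1}$, enter.
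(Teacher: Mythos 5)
Your outer skeleton (a uniform local estimate $m_d(W\cap B)\ge c\,m_d(B)$ followed by the Lebesgue density theorem) is exactly the paper's, but the heart of the matter --- the single-level estimate $m_d(E_\ell\cap B)\ge c\,m_d(B)$ --- is not actually established in your proposal, and this is a genuine gap, not a routine verification. Your second-moment route hinges on the asserted bound
$\sum_{q\ne q'}m_d(G_\ell(q)\cap G_\ell(q')\cap B)\le C(d)\bigl(\int_B N_\ell\bigr)^2+m_d(B)\,O_d(q_\ell^{a^*-2}\log q_\ell)$,
but expanding the product $\prod_i m_1(U_{i,\ell}(q)\cap U_{i,\ell}(q')\cap B_i)$ produces several correlation terms you do not account for: the terms in which two or more coordinates pick up the coincident-rational contribution $\gcd(q,q')\rho_{i,\ell}$ are \emph{not} $o(1)$ relative to the main term (already for $d=2$, $a_1=a_2=3/2$, the global double-gcd sum is of the same order as the main term), and the localisation to $B$ of the gcd terms involves $(r\gcd(q,q')+O(1))$ counts whose ``$+O(1)$'' pieces need separate treatment. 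These can probably be tamed, but as written the claimed error bound is wrong in form, and the conclusion $\int_B N_\ell^2\asymp_d m_d(B)$ is asserted rather than proved. The degenerate case $k=1$ (where $a^*=2$ and your variance bound collapses) is dispatched by an appeal to ``mixing of the Gauss map'', which is a gesture, not an argument: what you need is a uniform-in-$B$ and uniform-in-$\ell$ bound on the measure of points whose continued-fraction denominators skip the window $[q_\ell/M,q_\ell]$, and nothing in the proposal supplies it. Finally, your second step is both unjustified and unnecessary: the quasi-independence inequality $m_d(E_{\ell_j}\cap E_{\ell_m}\cap B)\le C\,m_d(E_{\ell_j}\cap B)\,m_d(E_{\ell_m}\cap B)/m_d(B)$ does not follow from scale separation alone (it requires equidistribution of the finer level inside each coarser rectangle, i.e.\ another instance of the very estimate under discussion), and once you have $m_d(E_\ell\cap B)\ge c\,m_d(B)$ for all large $\ell$ you can skip Borel--Cantelli entirely, since $m_d(W\cap B)=m_d(\limsup_\ell(E_\ell\cap B))\ge\limsup_\ell m_d(E_\ell\cap B)$ in a finite measure space. (A side remark: your disposal of bounded sequences $\{q_\ell\}$ is also incorrect --- a bounded sequence need not drop below your threshold, and for a constant large $q_\ell=Q$ the conclusion can fail; like the paper, you really need $q_\ell\to\infty$, which is how the lemma is used.)

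The paper's proof avoids all of this with one idea you are missing: a weighted Dirichlet (Minkowski convex body) argument. Writing $\bar a_i=a_i-1\ge0$ with $\sum_i\bar a_i=1$, Minkowski's theorem gives, for every $\mathbf{x}\in\I^d$, some $1\le q\le q_\ell$ and integers $p_i$ with $|qx_i-p_i|<q_\ell^{-\bar a_i}$; hence every ball $B$ is covered by the rectangles $\prod_iB\bigl(p_i/q,\tfrac{1}{q\,q_\ell^{\bar a_i}}\bigr)$ with $q\le q_\ell$. A direct count (at most $(2rq+2)^d$ rational centres per denominator, each rectangle of measure $2^d/(q^dq_\ell)$) shows the rectangles with $q<q_\ell/M$ meet $B$ in measure at most $\tfrac14 m_d(B)+8^d\log q_\ell/q_\ell<\tfrac12 m_d(B)$ for $\ell$ large --- this is precisely where $M=4^{d+1}$ enters. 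So at least half of $B$ is covered by rectangles with $q\in[q_\ell/M,q_\ell]$, and for such $q$ one has $\tfrac{1}{q\,q_\ell^{\bar a_i}}\le M/q_\ell^{a_i}\le(\widetilde M/q_\ell)^{a_i}$, whence these rectangles lie in $E_\ell$ and $m_d(B\cap E_\ell)\ge\tfrac12 m_d(B)$ uniformly, with no case distinctions, no second moment, and no gcd correlations. If you want to salvage your route you must genuinely prove the localized overlap estimates and the $k=1$ ubiquity statement; but the Minkowski argument is both shorter and is what the hypotheses $a_i\ge1$, $\sum_ia_i=d+1$ and the constant $M=4^{d+1}$ are designed for.
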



The following proposition constitutes the desired lower bound.

\begin{proposition}\label{lowerbound-d}
  For  $1\leq i\leq d$, let  $\psi_i: \mathbb{R}^+\to\mathbb{R}^+$ be a real, positive, non-increasing function.
 Then
  $$\dim_{\rm H} W_d(\Psi) \geq
  \sup_{\mathbf{t}\in\UP}  \min \left\{ \min_{  i \in \mathcal{L}(\mathbf{t}) }\zeta_i(\mathbf{t}),\ \#\mathcal{L}(\mathbf{t})\right\},$$
	where given $\mathbf{t}=(t_1,\dots,t_d) \in (\R^+\cup \{+\infty\})^d$  we set  $\mathcal{L}(\mathbf{t}):=\{1\leq i\leq d: t_i<+\infty\}$.
\end{proposition}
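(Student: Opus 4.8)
The plan is to prove the lower bound one accumulation point at a time, applying the unbounded Mass Transference Principle (Theorem~\ref{pointcase}) to a suitable $\limsup$ subset of $W_d(\Psi)$ whose blown‑up version has full measure by Lemma~\ref{ll2}. Since the right‑hand side is a supremum over $\mathbf{t}\in\UP$, it suffices to fix $\mathbf{t}=(t_1,\dots,t_d)\in\UP$ and show $\dim_{\rm H}W_d(\Psi)\ge\min\{\min_{i\in\mathcal{L}(\mathbf{t})}\zeta_i(\mathbf{t}),\,\#\mathcal{L}(\mathbf{t})\}$. If $\mathcal{L}(\mathbf{t})=\emptyset$ the bound is $0$, and if $\mathcal{L}(\mathbf{t})=\{1,\dots,d\}$ the inequality is the lower bound of Theorem~WW (equivalently, the argument below with the surplus distributed among the finite coordinates). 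So I would assume $1\le L:=\#\mathcal{L}(\mathbf{t})<d$ and, reordering coordinates, $t_1,\dots,t_L<+\infty$, $t_{L+1}=\dots=t_d=+\infty$; by definition of $\UP$ there is an increasing integer sequence $\{q_k\}$ with $-\log\psi_i(q_k)/\log q_k\to t_i$ for all $i$.

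The construction would be as follows. Choose $\mathbf{u}=(u_1,\dots,u_d)$ with $1\le u_i\le 1+t_i$ for $i\le L$, $u_i\ge 1$ for $i>L$, and $\sum_i u_i=d+1$ (possible since $L<d$); put $M:=4^{d+1}$, $\widetilde M:=\max_i M^{1/u_i}$, and $r_n:=\widetilde M/q_k$. For each $k$, each integer $q\in[q_k/M,q_k]$ and each $\mathbf{p}\in\{0,\dots,q\}^d$ take the ball $B_{i,n}:=B(p_i/q,\psi_i(q_k)/q_k)\subseteq[0,1]$, the index $n$ enumerating the triples $(k,q,\mathbf{p})$. Then $r_n\to0$ and $r(B_{i,n})=\psi_i(q_k)/q_k=r_n^{v_{i,n}}$ with $v_{i,n}:=\log(\psi_i(q_k)/q_k)/\log(\widetilde M/q_k)\to 1+t_i=:v_i$ (finite exactly for $i\le L$), so \eqref{ezsv}--\eqref{cond:r-n} hold and $\mathbf{u},\mathbf{v}$ satisfy \eqref{111}. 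By \eqref{B_in_s_in} and \eqref{sin} the scaled balls are $B_{i,n}^{s_{i,n}}=B(p_i/q,(\widetilde M/q_k)^{u_i})$, hence $\limsup_n\prod_i B_{i,n}^{s_{i,n}}=\limsup_k E_k$, where $E_k$ is exactly the set in Lemma~\ref{ll2} for $\mathbf{a}=\mathbf{u}$; so \eqref{fullmeasure1} holds by Lemma~\ref{ll2} and Theorem~\ref{pointcase} yields $\dim_{\rm H}(\limsup_n\prod_i B_{i,n})\ge s_0(\mathbf{u},\mathbf{v})$. Finally, since $q\le q_k$ and each $\psi_i$ is non‑increasing, $\psi_i(q_k)/q_k\le\psi_i(q)/q$, so $\prod_i B_{i,n}\subseteq R((\mathbf{p},q),\Psi)$; as the denominators $q\ge q_k/M$ tend to infinity along $n$, this gives $\limsup_n\prod_i B_{i,n}\subseteq W_d(\Psi)$ and therefore $\dim_{\rm H}W_d(\Psi)\ge s_0(\mathbf{u},\mathbf{v})$.

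The one genuinely delicate structural point is that on the window $[q_k/M,q_k]$ monotonicity controls $\psi_i$ only from below, so the radius of $B_{i,n}$ cannot be allowed to follow $\psi_i(q)$ itself; pinning it to $\psi_i(q_k)/q_k$ is precisely what keeps $v_{i,n}$ convergent while still keeping $\prod_i B_{i,n}$ inside $W_d(\Psi)$, and the choice $r_n=\widetilde M/q_k$ makes the blown‑up rectangles coincide exactly (constants and all) with the sets $E_k$ of Lemma~\ref{ll2}. What then remains is to choose $\mathbf{u}$ so that $s_0(\mathbf{u},\mathbf{v})$ is as close as we wish to $\min\{\min_{i\le L}\zeta_i(\mathbf{t}),L\}$; then, letting the auxiliary parameters tend to $0$ (using the continuity \eqref{fufu} of the dimensional number) and taking $\sup_{\mathbf{t}\in\UP}$ completes the proof. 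Taking $u_i=1$ for $i\le L$ and placing the unit surplus on the coordinates $i>L$, a direct expansion of $s(\mathbf{u},\mathbf{v},j)$ from \eqref{dimensionalnumber} gives $s(\mathbf{u},\mathbf{v},j)=\zeta_j(\mathbf{t})$ for every $j\le L$ with $t_j\ge1$ — the surplus $\sum_{k>L}u_k=d-L+1$ supplying exactly the gap between "$L+1$" and "$d+1$" in the numerator of $\zeta_j$ — while the coordinates $j\le L$ with $t_j<1$ together with the cap $\#\mathcal{L}(\mathbf{v})=L$ in \eqref{dimensionalnumber} require a slightly more careful distribution of the surplus among those coordinates and the coordinates $i>L$. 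I expect this final optimisation — matching the dimensional number $s_0(\mathbf{u},\mathbf{v})$ to $\min\{\min_{i\le L}\zeta_i(\mathbf{t}),L\}$ and reconciling the super‑decaying directions with the small‑exponent directions — to be the main remaining obstacle; the rest of the argument is routine.
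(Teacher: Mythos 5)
Your construction is, up to notation, exactly the paper's: the same $\limsup$ subset built from the balls $B(p_i/q,\psi_i(q_k)/q_k)$ with $q\in[q_k/M,q_k]$, the same choices $r_n=\widetilde M/q_k$ and $v_{i,n}$ with $v_i=1+t_i$, full measure of the blown-up rectangles via Lemma~\ref{ll2}, and Theorem~\ref{pointcase} giving $\dim_{\rm H}W_d(\Psi)\ge s_0(\mathbf{u},\mathbf{v})$ for every admissible $\mathbf{a}=\mathbf{u}$ (that is, $a_i\ge1$, $a_i\le 1+t_i$ for $i\in\mathcal{L}(\mathbf{t})$, $\sum_i a_i=d+1$). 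However, the proposition is only proved once one exhibits, for each $\mathbf{t}\in\UP$, such an $\mathbf{a}$ with $s_0(\mathbf{u},\mathbf{v})=\min\{\min_{i\in\mathcal{L}(\mathbf{t})}\zeta_i(\mathbf{t}),\,\#\mathcal{L}(\mathbf{t})\}$, and this is precisely the step you leave open, declaring it ``the main remaining obstacle''. Your suggested choice ($u_i=1$ on $\mathcal{L}(\mathbf{t})$, surplus on the infinite coordinates) gives $s(\mathbf{u},\mathbf{v},j)=\zeta_j(\mathbf{t})$ only when $\mathcal{K}_1(j)=\emptyset$, i.e. when $t_j\ge 1/(d-\#\mathcal{L}(\mathbf{t}))$ (not $t_j\ge1$ as stated), and for the small-exponent coordinates neither the value $\zeta_j(\mathbf{t})$ nor the cap $\#\mathcal{L}(\mathbf{t})$ is recovered. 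Since everything else (the full measure lemma, the transference, the inclusion into $W_d(\Psi)$) is the routine part, this unfinished optimisation is a genuine gap rather than a verification detail.

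For the record, the paper closes it by a case analysis you would need to reproduce in some form. If $\sum_{i\in\mathcal{L}(\mathbf{t})}t_i<1$ it bypasses the Mass Transference Principle entirely: Minkowski's convex body theorem shows $\I^{m}\times\{0\}^{d-m}\subset W_d(\Psi)$ with $m=\#\mathcal{L}(\mathbf{t})$, which suffices because in this regime the right-hand side equals $\#\mathcal{L}(\mathbf{t})$; this also explains why no admissible $\mathbf{a}$ is sought there (the auxiliary set $\mathcal{T}$ used below may be empty). If $\sum_{i\in\mathcal{L}(\mathbf{t})}t_i\ge1$, then either $\min_i t_i>1/d$, in which case $a_i=1+1/d$ for all $i$ yields $s(\mathbf{u},\mathbf{v},i)=\zeta_i(\mathbf{t})$ for every $i\in\mathcal{L}(\mathbf{t})$, or one defines the threshold $t_K$ (smallest element of $\mathcal{T}$, nonempty thanks to $\sum t_i\ge1$) and $t^*$, and sets $a_i=1+t_i$ if $t_i<t_K$ and $a_i=1+t^*$ if $t_i\ge t_K$; one then checks $s(\mathbf{u},\mathbf{v},i)=\zeta_i(\mathbf{t})$ for $t_i\ge t_K$, while for $t_i<t_K$ one gets $s(\mathbf{u},\mathbf{v},i)=d$ together with $\zeta_i(\mathbf{t})>d\ge\zeta_K(\mathbf{t})$, so that $\min_{i\in\mathcal{L}(\mathbf{v})}s(\mathbf{u},\mathbf{v},i)=\min_{i\in\mathcal{L}(\mathbf{t})}\zeta_i(\mathbf{t})$ and hence $s_0(\mathbf{u},\mathbf{v})$ equals the claimed minimum. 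Without this (or an equivalent) explicit choice and verification, your argument does not establish the stated lower bound.
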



\begin{proof}

Fix any point $\mathbf{t}=(t_1,\dots,t_d)\in\UP$. The proof of the proposition follows on showing \begin{align}\label{lower-continuity}
 \dim_{\rm H} W_d(\Psi)\geq  \min\left\{\min_{i\in \mathcal{L}(\mathbf{t})}\zeta_i(\mathbf{t}),\ \#\mathcal{L}(\mathbf{t})\right\}   \, .
 \end{align}
 Without loss of generality, we assume that $\psi_i(q) \to 0$ as $q\to \infty$   for all $1\leq i \leq d$.  Indeed, if this was not the case we consider the function $
\widetilde{\Psi}:=(\widetilde{\psi}_i)_{1\leq i \leq d}  \ $ where $$  \widetilde{\psi}_i(q):=\min\{\psi_i(q), \  (\log q)^{-1} \} \ \ \qquad  (1\le i\le d)
$$ and define $W_d(\widetilde{\Psi})$ and $\mathcal{U}(\widetilde{\Psi})$ accordingly.  Then, by definition $\widetilde{\psi}_i(q) \to 0$ as $q\to \infty$   for all $1\leq i \leq d$  and  $ W_d(\Psi)    \supset W_d(\widetilde{\Psi}) $.  Moreover,  $\mathcal{U}(\widetilde{\Psi})   =  \mathcal{U}({\Psi})$  and so the desired statement \eqref{lower-continuity} for $W_d(\Psi)$  follows on establishing \eqref{lower-continuity} with $\Psi$ replaced by $\widetilde{\Psi}$.    Also note that we can assume that  $\#\mathcal{L}(\mathbf{t}) \neq  0 $, since otherwise the right hand side of \eqref{lower-continuity} is equal to zero and the inequality  is trivially true.  Furthermore, we will assume that $\#\mathcal{L}(\mathbf{t}) \neq  d $.  Indeed, if $\#\mathcal{L}(\mathbf{t}) = d $ for all $\mathbf{t}\in\UP$  the desired statement corresponds to the result of Wang \& Wu (\S\ref{int}, Theorem~WW).  With these assumptions in mind, we proceed with  establishing \eqref{lower-continuity}.

\noindent \emph{Step 1.} \  We first deal with the situation  when

\begin{equation}\label{4sv}
\sum_{i\in\mathcal{L}(\mathbf{t})} t_i  \, <  \, 1  \, . \end{equation}  For notational simplicity, we assume $\mathcal{L}(\mathbf{t})=\{1,\cdots, m\}$ with  $1\leq m \leq d-1$.  Since $\mathbf{t}\in\UP$ and satisfies \eqref{4sv}, it follows form the  definition of $\UP$  that there exists an unbounded sequence of integers $\{q_{\ell}\}_{\ell\ge 1}$
such that $$
q_{\ell}\cdot \prod_{i=1}^m\psi_i(q_{\ell})>1 \qquad  \forall \ \ell\ge 1.
$$
Thus, by  Minkowski's  Convex Body Theorem (\cite[Chp~2: Theorem~2B]{Sch1980}), for any  $\vx=(x_1, \ldots, x_m)\in\I^m :=[0,1]^m$   and each $\ell\ge 1$, there exists a non-zero integer point  $(p_1,\cdots, p_m, q) \in \Z^m \times \N $ with $1\le q\le q_{\ell}$ such that $$
|qx_i-p_i|<\psi_i(q_{\ell})\le \psi_i(q)   \ \ \qquad  (1\le i\le m)  \, .
$$
Since $\psi_i(q_{\ell})\to 0$ as $\ell\to \infty$ for all $1\leq i \leq d$,  
we conclude that there exist infinitely many integers $q\in \N$ such that $$
\|qx_i\|<\psi_i(q) \ \ \qquad  (1\le i\le m)  \, .
$$ Since $m<d$,  the upshot of the above is that
$$
\I^m  \ \times \prod_{i=m+1}^d\{0\}\subset W_d(\Psi)  \, .
$$
Hence,  $\dim_{\rm H} W_d(\Psi)\geq m=\#\mathcal{L}(\mathbf{t})$ and this verifies \eqref{lower-continuity} in the situation that \eqref{4sv} holds.

\medskip

\noindent \emph{Step 2.} \  We now deal with the `other' situation.  That is,  to verify  \eqref{lower-continuity} with  $\mathbf{t}\in\UP$ satisfying \begin{equation}\label{4}
\sum_{i\in\mathcal{L}(\mathbf{t})} t_i\ge 1.
\end{equation}
For this we will make use of Theorem~\ref{pointcase}.
To start with,  note that since $\psi_i(q) \to 0 $ as $q\to\infty$, we have that
$$t_i\geq 0  \qquad \forall  \ 1\le i\le d \, . $$  
Also, by  definition, for any $ \mathbf{t} \in\UP$ there exists an integer  subsequence $\{q_\ell\}_{\ell\in\mathbb{N}}$ such that
$$\lim_{\ell\to\infty}\frac{-\log\psi_i(q_\ell)}{\log q_\ell}=t_i\qquad \forall \ 1\le i\le d \, .  $$
In turn,  consider the  following $\limsup$ set
\begin{align}\label{subset}
W \, := \, \limsup_{\ell\to\infty}\bigcup_{\frac{1}{M}q_\ell\le q\le q_\ell} \ \bigcup_{0\le p_1,\cdots, p_d  \le q} \ \prod_{i=1}^d
B\left(\frac{p_i}{q},\frac{\psi_i(q_\ell)}{q_\ell}\right)  \,
\end{align} 
 where  $M:=4^{d+1}$.  Then,  since for each $1\leq i\leq d$,  the function
  $\psi_i$   is  non-increasing  it follows that
  $$ W \subset W_d(\Psi)  \, , $$
and in view of  Lemma \ref{ll2}
\begin{equation} \label{measuresubset}
m_d\left(\limsup_{\ell\to\infty}\bigcup_{{1 \over M}q_\ell\le q\le q_\ell} \ \bigcup_{0\le p_1,\cdots, p_d  \le q}  \ \prod_{i=1}^d
B\left(\frac{p_i}{q}, \ \Big(\frac{\widetilde{M}}{q_\ell}\Big)^{a_i}\right)\right)
=m_d(\I^d)
\end{equation}
for any  $\mathbf{a}=(a_1,\dots, a_d) $  with
$$a_i  \ge 1   \quad ( 1\leq i\leq d) \qquad {\rm and  }    \qquad  a_1+\cdots+a_d=d+1  \, . $$
Now with Theorem~\ref{pointcase} in mind,  for $\ell \in \N$, let
 $$r_\ell={\widetilde{M} \over q_\ell} \, $$
 and for $1\leq i \leq d$, let
\[
\delta_i=1,  \,  \qquad  u_i=a_i, \qquad v_{i, \ell}={\log \psi_i(q_\ell)- \log q_\ell \over \log \widetilde{M}- \log q_\ell},
\]
which implies that
\[
v_i:=\lim_{\ell\to\infty}v_{i, \ell}=1+t_i \,.
\]
Note that if
$
 B_{i,\ell} := \textstyle{B\Big(\frac{p_i}{q},\frac{\psi_i(q_\ell)}{q_\ell}\Big)}
$ is a ball appearing in \eqref{subset} then
$$
B_{i,\ell}^{s_{i,\ell}}  := \textstyle{B\Big(\frac{p_i}{q},\big(\frac{\widetilde{M}}{q_\ell}\big)^{a_i}\Big)}
\qquad  {\rm with } \quad s_{i,n}:=\frac{a_i}{v_{i,n}}    $$
is the corresponding `enlarged' ball appearing in the full measure statement \eqref{measuresubset}.
Thus, in view of  Theorem \ref{pointcase} we conclude that
\begin{equation}
\dim_{\rm H}W_d(\Psi)\ \geq \ \dim_{\rm H}  W  \ \ge \   \min\left\{\min_{i\in \mathcal{L}(\mathbf{v})}s(\mathbf{u},\mathbf{v},i), \ \#\mathcal{L}(\mathbf{v})\right\}
\end{equation}
with $\mathbf{v}=  1 + \mathbf{t}$ and for any $\mathbf{u}=\mathbf{a}=(a_1,\dots, a_d) $  with
\begin{equation}  \label{poil} 1 \le a_i  \le   1  + t_i  \quad ( 1\leq i\leq d) \qquad {\rm and  }    \qquad  a_1+\cdots+a_d=d+1  \, . \end{equation}
\noindent It is evident that $\mathcal{L}(\mathbf{v})=\mathcal{L}(\mathbf{t})$. Thus, for \eqref{poll} to imply (\ref{lower-continuity}), it suffices to show that on choosing $\mathbf{a}=(a_1,\dots, a_d) $ satisfying \eqref{poil}  appropriately,  we can guarantee that  \begin{align}\label{min_suffice}
\min_{i\in \mathcal{L}(\mathbf{v})}s(\mathbf{u},\mathbf{v},i)=\min_{i\in \mathcal{L}(\mathbf{v})}\zeta_i(\mathbf{t}).
\end{align}

\noindent We divide the  proof  of this into two cases.

\bigskip

\emph{Case 1. \ }   Suppose $\min\limits_{1\leq i\leq d}t_i> \frac{1}{d}$. We choose
$$a_k=1+\frac{1}{d}>1 \ 
\qquad \forall \ 1\le i\le d \, .$$
Then, $a_1+\cdots+a_d=d+1$.  Thus, $\mathbf{a}=(a_1,\dots, a_d) $ satisfies \eqref{poil}  and by definition,
for each $i\in \mathcal{L}(\mathbf{v})=\mathcal{L}(\mathbf{t})$,
\begin{equation}\label{assump-sum-t}
	s(\mathbf{u},\mathbf{v},i)=
	\sum_{k\in \mathcal{K}_1(i)}1+\sum_{k\in \mathcal{K}_2(i)}\left(1-\frac{t_k-{1\over d}}{1+t_i}\right) +\sum_{k\in \mathcal{K}_3(i)}\frac{1+\frac{1}{d}}{1+t_i}
\end{equation}
where
\begin{equation*}
	\mathcal{K}_1(i):= \{1\le   k\le   d: 1+\frac{1}{d}> 1+t_i\}=\emptyset,\ \ \mathcal{K}_2(i):= \{k\in\mathcal{L}(\mathbf{t}): t_k\leq  t_i\}, \
\end{equation*}
and
\begin{equation*}
	\mathcal{K}_3(i):=\{1, \dots, d\}\setminus (\mathcal{K}_1(i)\cup \mathcal{K}_2(i)).
\end{equation*}
So,
\begin{eqnarray*} s(\mathbf{u},\mathbf{v},i) & = &\frac{\sum_{k:t_k\leq t_i}(1+\frac{1}{d}+t_i-t_k)+\sum_{k:t_k> t_i}(1+\frac{1}{d})}{1+t_i} \\[2ex]
& = & \frac{d+1  + \sum_{k: t_k<t_i}(t_i-t_k)}{1+t_i}=\zeta_i(\mathbf{t})  \, .
\end{eqnarray*}
Hence, \eqref{min_suffice} holds as desired.

\emph{Case 2. \ } Suppose $\min\limits_{1\leq i\leq d}t_i\leq  \frac{1}{d}$.
Let
\[
\mathcal{T}:=\left\{t_j: j\in\mathcal{L}(\mathbf{t})  \ {\rm and} \ t_j\geq\frac{1-\sum_{i:t_i<t_j}t_i}{\#\{1\leq i\leq d: t_i\geq t_j\}}\right\}.
\]
Note that if $t_{\rm max} :=\max_{j \in \mathcal{L}(\mathbf{t}) }t_j$ and $\#_{\rm max} :=  \# \{  j \in \mathcal{L}(\mathbf{t}) : t_j = t_{\rm max} \} $, then by~\eqref{4} we have that
$$
\sum_{i:t_i<t_{\rm max} }t_i  \  + \  \#_{\rm max} \, \cdot  t_{\rm max}
\ = \
 \sum_{i  \in\mathcal{L}(\mathbf{t}) }t_i  \  \ge  \ 1
$$
and so $t_{\rm max} \in \mathcal{T}$.  Thus, $\mathcal{T}\neq \emptyset$.   Now,
 let $t_K$ be the smallest element of $\mathcal{T}$ and let
$$t^*:=\frac{1-\sum_{i:t_i<t_K}t_i}{\#\{1\leq i\leq d: t_i\geq t_K\}}  \, . $$
Clearly, $t^*$ is the right hand side of the inequality appearing in the definition of $\mathcal{T}$ with $j=K$.  Hence, trivially
\begin{equation} \label{nm}
t_K \geq t^*  \,
\end{equation}
and moreover $t^*>0$.  Indeed, to see the latter suppose it was not the case. Then
\[
\sum_{i:t_i<t_K}t_i \geq 1.
\]
Let $t_{i_0}:=\max\{t_i: t_i<t_K\}$. It follows that
\[t_{i_0}  \  \geq   \  1-\sum_{i\neq i_0:t_i<t_K}t_i   \ \geq \  {1-\sum_{i\neq i_0:t_i<t_K}t_i \over \#\{1\leq i\leq d: t_i\geq t_K\} +1} \ = \  {1-\sum_{i:t_i<t_{i_0}}t_i \over \#\{1\leq i\leq d: t_i\geq t_{i_0}\}} \, ,   \]
and so  $t_{i_0} \in   \mathcal{T} $. However, this  contradicts the fact that  $t_K$ is the smallest element of $\mathcal{T}$.
\medskip

Now, for $1\leq i \leq d$, let
\[
a_i:=\begin{cases}
  1+t_i\ \ &\text{if}\ t_i<t_K\\[2ex]
  1+t^* \ \ &\text{if}\ t_i\geq t_K.
\end{cases}
\]
Since $t_i\geq 0$ and $t^*>0$, we have that $a_i\geq 1$ for all $1\leq i\leq d$.
Moreover, it follows from the definition of $t^*$ that
\begin{align}\label{sum_ai}
a_1+a_2+\cdots+a_d=d+1.
\end{align}
Thus, $\mathbf{a}=(a_1,\dots, a_d) $ satisfies \eqref{poil}  and by definition,
for each $i\in \mathcal{L}(\mathbf{v})=\mathcal{L}(\mathbf{t})$,
 \begin{equation*}
	s(\mathbf{u},\mathbf{v},i)=
	\sum_{k\in \mathcal{K}_1(i)}1+\sum_{k\in \mathcal{K}_2(i)}\left(1-\frac{1+t_k-a_k}{1+t_i}\right) +\sum_{k\in \mathcal{K}_3(i)}\frac{a_k}{1+t_i}
\end{equation*}
where
\begin{equation*}
	\mathcal{K}_1(i):= \{1\le   k\le   d: a_k> 1+t_i\},\ \ \mathcal{K}_2(i):= \{k\in\mathcal{L}(\mathbf{t}): t_k \leq  t_i\}. \
\end{equation*}
and
\begin{equation*}
	\mathcal{K}_3(i):=\{1, \dots, d\}\setminus (\mathcal{K}_1(i)\cup \mathcal{K}_2(i)).
\end{equation*}

\medskip

To proceed, we further distinguish between two subcases:  indices $i\in\mathcal{L}(\mathbf{t})$ with  $t_i\geq t_K$ (Case~2.1) and, those with  $t_i < t_K$ (Case~2.2).

\medskip

\emph{ Case 2.1.} \ Suppose  $i\in\mathcal{L}(\mathbf{t})$ with $t_i\geq t_K$. Then, by definition, for all $1 \le k \le d $ such that $t_k<t_K$, we have $a_k=1+t_k<1+t_i$, and for all $1 \le k \le d $ such that $t_k\geq t_K$, we have $a_k=1+t^*\leq1+t_K\leq 1+ t_i$. Hence, $\mathcal{K}_1(i)=\emptyset$ and since  $i\in\mathcal{L}(\mathbf{t})$, it is obvious that $\mathcal{K}_2(i)=\{1\leq k \leq d:t_k\leq t_i\}$.  Therefore, it follows on using \eqref{sum_ai} that
\begin{eqnarray}
\label{qaz}
s(\mathbf{u},\mathbf{v},i)
&=&\sum_{k:t_k\leq t_i} \left(1-\frac{1+t_k-a_k}{1+t_i}\right)+ \sum_{k:t_k> t_i}\frac{a_k}{1+t_i}   \nonumber \\[2ex]
&=&\frac{d+1  + \sum_{k: t_k<t_i}(t_i-t_k)}{1+t_i}=\zeta_i(\mathbf{t})  \, .
\end{eqnarray}

\emph{Case 2.2.} \ Suppose  $i\in\mathcal{L}(\mathbf{t})$ with $t_i< t_K$. Then, by the definition of $t_K$,  we have  that
\begin{equation*} 
t_i < \frac{1-\sum_{k:t_k< t_i}t_k}{\#\{1\leq k\leq d: t_k\geq t_i\}}.
\end{equation*}
This is equivalent to
\begin{equation}\label{t-i<t-k}
\#\{1\leq k\leq d: t_k\geq t_i\}\cdot t_i \  < \  1-\sum_{k:t_k< t_i}t_k,
\end{equation}
which together with the fact that $t_i< t_K$,   implies that
\begin{equation*} \label{ti_leq_sum_tk}
\#\{1\leq k\leq d: t_k\geq t_K\} \cdot t_i \ <  \  1-\sum_{k:t_k< t_K}t_k.
\end{equation*}
In the other words,
\begin{equation*} \label{t_i-t_K}
	 t_i \ <  \ \frac{1-\sum_{k:t_k<t_K}t_k}{\#\{1\leq k\leq d: t_k\geq t_K\}} \ = \ t^*.
\end{equation*}
Thus, for all $1 \le k \le d $ such that $t_k\geq t_K $, the above implies that $a_k:=1+t^*>1 + t_i$.  Obviously, if $t_K > t_k > t_i$ then $a_k:=1+t_k >  1 + t_i$. Hence,
$\mathcal{K}_1(i)= \{1\le   k\le   d: t_k>t_i\}$. On the other hand, since $i\in\mathcal{L}(\mathbf{t})$ it follows that  $\mathcal{K}_2(i)= \{1\leq k\leq d : t_k\leq t_i\}$
and hence $\mathcal{K}_3(i)= \emptyset.$  Therefore, 
\begin{eqnarray} \label{qazsv}
s(\mathbf{u},\mathbf{v},i)
&=&\sum_{k:t_k>t_i}1+ \sum_{k:t_k\leq  t_i}(1-{1+t_k-a_k \over 1+t_i}) \nonumber \\[2ex]
&=& \sum_{k:t_k>t_i}1+ \sum_{k:t_k\leq t_i}(1-{1+t_k-(1+t_k) \over 1+t_i})  \nonumber \\[2ex]
&=&d  \, .
\end{eqnarray}


\noindent Now observe, that in view of \eqref{qaz} with $i=K$  and  \eqref{nm}, it follows that
\begin{eqnarray} \label{zeta_K_leq_d}
s(\mathbf{u},\mathbf{v},K) = \zeta_K(\mathbf{t})&:= &\frac{d+1  + \sum_{k: t_k<t_K}(t_K-t_k)}{1+t_K}  \nonumber \\[2ex]
& = & d + \frac{\#\{k:t_k\geq t_K\}}{1+t_K}\cdot\left(-t_K+ t^* \right) \nonumber \\[2ex]
&\leq &  d,
\end{eqnarray}
which together with \eqref{qazsv}  implies that
\begin{equation} \label{patel}
s(\mathbf{u},\mathbf{v},K)\leq s(\mathbf{u},\mathbf{v},i)  \qquad \forall \quad i\in\mathcal{L}(\mathbf{t})   \quad {\rm with} \quad t_i< t_K  \, .
\end{equation}
 On the other hand, for any $i\in\mathcal{L}(\mathbf{t})$ with  $t_i<t_K$, we use \eqref{t-i<t-k}  to obtain
\begin{eqnarray*}\label{t_i}
\zeta_i({\mathbf{t}})&=&\frac{d+1  + \sum_{k: t_k<t_i}(t_i-t_k)}{1+t_i} \nonumber \\[2ex]
&>&\frac{d+\#\{k: t_k<t_i\} \cdot t_i+\#\{k: t_k\geq t_i\} \cdot t_i}{1+t_i}  \nonumber \\[2ex]
&=& d \, ,
\end{eqnarray*}
which together with~\eqref{zeta_K_leq_d} implies that
\begin{equation} \label{patel2}
\zeta_K(\mathbf{t})<\zeta_i(\mathbf{t})  \qquad \forall \quad i\in\mathcal{L}(\mathbf{t})   \quad {\rm with} \quad t_i< t_K  \, .
\end{equation}

\medskip   On  combining our findings in the above subcases, namely \eqref{qaz}, \eqref{patel}  and \eqref{patel2}, we find that
\begin{equation*}\label{compare1}
 \min_{i\in \mathcal{L}(\mathbf{v})}s(\mathbf{u},\mathbf{v},i) \ = \min_{i\in \mathcal{L}(\mathbf{v}): \ t_i\geq t_K}s(\mathbf{u},\mathbf{v},i)\ =\min_{i\in \mathcal{L}(\mathbf{v}): \ t_i\geq t_K}\zeta_i(\mathbf{t})=\min_{i\in \mathcal{L}(\mathbf{v})}\zeta_i(\mathbf{t}) .
\end{equation*}
This establishes~\eqref{min_suffice} in Case~2 and thereby completes the proof of the proposition.
\end{proof}

The above proof of Proposition~\ref{lowerbound-d} is based on the validity of Lemma~\ref{ll2} and of course Theorem~\ref{pointcase}.
We now establish Lemma~\ref{ll2}.

\bigskip

\begin{proof}[Proof of Lemma \ref{ll2}]
By the Lebesgue density theorem, it suffices to show that there is a constant $c>0$ such that for any ball $B\subset \I^d= [0,1]^d$, we have \begin{equation}\label{q2}
m_d(B\cap W)\ge c \cdot m_d(B).
\end{equation}
To this end, we will show that for all large $\ell  \in \N$,
$$
m_d(B\cap E_\ell)\ge c \cdot m_d(B),
$$ which clearly implies (\ref{q2}).   Throughout, for convenience,  we work with  the max norm.  Thus, for any ball $B=B(r)$ of  radius $r$ we have  $m_d(B)= (2r)^d$.

Given
$\mathbf{a} =({a}_1,\cdots,{a}_d)  \in (\R^+)^d  $  as in the statement of the lemma, let $\bar{\mathbf{a}} =(\bar{a}_1,\cdots,\bar{a}_d)   $  where $\bar{a}_i := a_i - 1   $  for each $1\le i\le d$.   It follows that
$$
\bar{a}_i   \ge 0    \quad  (1\le i\le d)  \qquad   {\rm and } \qquad
\bar{a}_1+\bar{a}_2+\cdots+\bar{a}_d=1.$$
 Minkowski's Convex Body  Theorem  \cite[Chp2: Theorem 2B]{Sch1980}, implies that  for  any $\mathbf{x}\in \I^d$ and $ \ell \in \N$, there exists non-zero integer point $(p_1,\cdots, p_d, q)\in \Z \times \mathbb{N}$ with $1 \le q\le q_\ell$ such that $$
|qx_i-p_i|<q_\ell^{-\bar{a}_i} \ \ \qquad  (1\le i\le d)  \, .
$$
Thus,  for any ball $B=B(r)\subset \I^d$  \begin{align}
\label{trew}
m_d(B)=m_d\left(B\cap \bigcup_{1\le q\le q_\ell}   \  \bigcup_{0\le p_1,\cdots, p_d \le q} \  \prod_{i=1}^dB\Big(\frac{p_i}{q}, \frac{1}{q\cdot q_\ell^{\bar{a}_i}}\Big)\right).
\end{align}

\noindent We now  split the above union into two  according to the size of the denominator $q$: $$
I_1:=\bigcup_{1 \le q< \frac{1}{M}\cdot q_\ell} \  \ \bigcup_{0\le p_1,\cdots, p_d \le q} \ \prod_{i=1}^dB\Big(\frac{p_i}{q}, \frac{1}{q\cdot q_\ell^{\bar{a}_i}}\Big) $$
and
$$ I_2:=\bigcup_{\frac{1}{M}q_\ell\le q\le q_\ell} \  \ \bigcup_{0\le p_1,\cdots, p_d\le q}\ \prod_{i=1}^dB\Big(\frac{p_i}{q}, \frac{1}{q\cdot q_\ell^{\bar{a}_i}}\Big)  \,  .
$$  It is easily verified  that for any ball $B=B(r)\subset \I^d$ and fixed $ q \in \N$
$$
 \# \left\{  0\le p_1,\cdots, p_d\le q  :  B\cap \prod_{i=1}^dB\Big(\frac{p_i}{q}, \frac{1}{q\cdot q_\ell^{\bar{a}_i}} \Big) { \neq \emptyset}   \right\}  \ \le  \ (2rq+2)^d  \, . $$
 Here we use the fact that rational points with denominator $q$ are separated by $1/q$.
 Thus,
\begin{align*}
  m_d(B\cap I_1)&\le \sum_{q<\frac{1}{M}\cdot q_\ell}\ \sum_{0\leq p_1,\cdots, p_d \le q}m_d\left(B\cap \prod_{i=1}^dB\Big(\frac{p_i}{q}, \frac{1}{q\cdot q_\ell^{\bar{a}_i}}\Big)\right)\\[2ex]
  &\le \sum_{q<\frac{1}{M}\cdot q_\ell}\ (2rq+2)^d \cdot \frac{2^d}{q^d\cdot q_\ell}=\sum_{q<\frac{1}{M}\cdot q_\ell}\ (r+{1 \over q})^d \cdot \frac{4^d}{ q_\ell}.
\end{align*}
Since $r+{1 \over q} \leq 2 \max\{r, {1 \over q}\}$ and by definition  $M= 4^{d+1}$,  it follows that for all large $\ell$
\begin{eqnarray*}
   m_d(B\cap I_1) &  < &  \sum_{q<\frac{1}{M}\cdot q_\ell}\left(\frac{8^d r^d}{q_\ell}+\frac{8^d}{q^d\cdot q_\ell}\right)\\[2ex]
  &\le & \frac{8^d\cdot r^d}{M}+ \frac{8^d\log q_\ell}{q_\ell} =   \frac{1}{4}m_d(B)+ \frac{8^d\log q_\ell}{q_\ell}  < \frac{1}{2}m_d(B).
\end{eqnarray*}
This, together with  \eqref{trew}  implies that $$
m_d(B\cap I_2)\ge m_d(B)-m_d(B\cap I_1)\ge \frac{1}{2}m_d(B).
$$ Finally, on noting that
\[
{1 \over q q_\ell^{\bar{a_i}}} \leq {M \over q_\ell^{a_i}}\leq {\big(\frac{\widetilde{M}}{q_\ell}\big)^{a_i}},
\]
we have that $
B\cap I_2\subset B\cap E_\ell
$
and so the desired statement follows.
\end{proof}

\section{Application to matrix transformation of tori
  }

  Let $T$ be a $d\times d$ non-singular matrix with real coefficients.
Then, $T$ determines a self-map of  the $d$-dimensional torus
 $X=\T^d:=\R^d/\Z^d$; namely, it sends $\vx\in\T^d$ to $T\vx$ modulo one. In what follows, $T$ will denote both the matrix and the transformation and for $n \in \N$,  by  $T^n$  we will always mean the $n$-th iteration of the transformation $T$ rather than the matrix multiplied $n$ times.

\medskip

  As usual, for $ 1 \le i \le d$, let $\psi_i: \R^+ \to \R^+$ be a real positive function and for convenience, let $\Psi := (\psi_1,\dots,\psi_d)$  and for $n \in \N$ let $\Psi(n) := (\psi_1(n),\dots,\psi_d(n))$.  Fix some point $\vz:= (z_1, \ldots,  z_d)  \in \T^d$ and for $n \in \N$, let
$$
 R\big(\vz, \Psi(n) \big):= \Big\{ \vx  \in \mathbb{T}^d :  \|x_i-z_i\| \le \psi_i(n)   \ (1\le   i \le   d) \Big\}.
$$
Clearly, $R\big(\vz, \Psi(n) \big)$ is  a rectangle centred at the fixed point $\vz$.
In turn, let
$$
W(T,\Psi,\vz):=\big\{\vx\in\mathbb{T}^d: T^n(\vx)\in R\big(\vz, \Psi(n) \big) \ \ \text{for infinitely many}\ n\in\mathbb{N}\big\}  \ . $$
For obvious reasons the sets $R_n:= R\big(\vz, \Psi(n) \big)$  can be thought of as rectangular targets that the orbit under $T$ of points in $X$ have to hit. The interesting situation is when the diameters of $R_n$ tend to zero as $n$ increases and thus it is natural to refer to $ W(T,\Psi,\vz)$ as the corresponding shrinking target set.
It is  easily verified  that if
$$
T = {\rm diag} \, (\beta_1, \ldots, \beta_d  )     \qquad   (\beta_i \in \R)  \,\vspace*{-3ex}
$$
then
\begin{equation}  \label{yesyes}
W(T,\Psi,\vz)=\big\{\vx\in\mathbb{T}^d: |T_{\beta_i}^nx_i-z_i| \le \psi_i(n)  \ (1\le   i \le   d) \ \ \text{for infinitely many}\ n\in\mathbb{N}\big\}  \, ,
\end{equation}
where $T_{\beta_i}$ is the  standard $\beta$-transformation   with $\beta=\beta_i$.
 Recall, given a real number $\beta$  such that  $|\beta|>1$,  the  associated $\beta$-transformation $T_{\beta}: [0,1)\to [0,1)$ is  given by $$T_{\beta}(x):=\beta x \ (\text{mod}\ 1).$$
The $\beta$-transformation admits an absolutely continuous invariant measure $\mu_\beta$, which is called Parry measure for positive $\beta$ and Yrrap measure for negative $\beta$. Let  $K(\beta)$ be the support of $\mu_\beta$ which are finite union of intervals. See \cite[Section 3.3]{LLVZstand} for some more details on $\mu_\beta$ and $K(\beta)$.

The set $W(T,\Psi,\vz)$ is non-trivial only when $\vz\in K:=\prod_{i=1}^d K(\beta_i)$. A straightforward consequence of \cite[Theorem~4]{LLVZstand} is that if  all the  eigenvalues $\beta_1,\beta_2,\dots,\beta_d$ are of absolute value strictly larger than $1$, then for any $\vz\in K$,
we have
\begin{eqnarray*}
		m_d|_K\big(W(T,\Psi,\vz)\big)=
		\begin{cases}
			0 &\text{if}\ \  \sum_{n=1}^\infty   \psi_1(n) \times \dots \times \psi_d(n) <\infty\\[2ex]
			1 &\text{if}\ \ \sum_{n=1}^\infty  \psi_1(n) \times \dots \times \psi_d(n) =\infty \, ,
		\end{cases}
	\end{eqnarray*}
where $m_d|K$ is the $d$-dimensional Lebesgue measure restricted on $K$.  This thus provide a complete description of the size of the shrinking target set $W(T,\Psi,\vz)$ in terms of the ambient measure.

Turning to the problem of determining its size in terms of Hausdorff dimension, it turns out that $ \dim_{\rm H} W(T,\Psi,\vz) $ is dependent on the set $\UP$ of accumulation points $ \mathbf{t}=(t_      1,t_2, \ldots, t_d ) $ of the sequence $$\Big\{\Big(\frac{-\log\psi_1(n)}{n},\cdots,
\frac{-\log\psi_d(n)}{n}\Big)\Big\}_{n\ge   1}  \, . $$
Indeed, the following statement  is in line with that claimed in \cite[Section~5.3]{LLVZstand}.   Under the assumption that $\UP$ is bounded,  it follows from results in \cite{HL, LLVZstand} -- see Remark~\ref{uhj} below.    In view of Theorem~\ref{pointcase}, we are able to remove the `bounded'  assumption and obtain a complete result.  As in the main body of the paper, for $\mathbf{t}=(t_1, \ldots,t_d)\in (\mathbb{R}^+\cup\{\infty\})^d$,  let $$\mathcal{L}(\mathbf{t}):=\{1\le   i \le   d: t_i<+\infty\}    \, .$$

%

\begin{theorem}\label{rectangledimresult}
Let $T$ be a real, non-singular matrix transformation of the torus $\mathbb{T}^d$. Suppose that $T$ is diagonal
and all its eigenvalues  $\beta_1,\beta_2,\dots,\beta_d$  are of modulus strictly larger than~$1$.
For $ 1 \le i \le d$, let $\psi_i: \mathbb{R}^+\to \mathbb{R}^+$ be a real positive non-increasing
		function and $\vz \in K$.  Then
		$$\dim_{\rm H} W(T,\Psi, \vz)=\sup_{\mathbf{t}\in\UP}\min \big\{\min_{i\in \mathcal{L}(\mathbf{t})}\{{\xi}_i(\mathbf{t})\}, \ \#\mathcal{L}(\mathbf{t})\big\}  \, ,$$
		where
		\begin{equation*}
			\xi_i(\mathbf{t}):=
			\sum_{k\in \mathcal{K}_1(i)}1+\sum_{k\in \mathcal{K}_2(i)}\left(1-\frac{t_k}{\log|\beta_i|+t_i}\right) +\sum_{k\in \mathcal{K}_3(i)}\frac{\log|\beta_k|}{\log|\beta_i|+t_i}
		\end{equation*}
and, in turn
			\begin{equation*}
			\mathcal{K}_1(i):= \{1\le   k\le   d: \log|\beta_k|>\log|\beta_i|+t_i\}, \ \mathcal{K}_2(i):= \{k\in\mathcal{L}(\mathbf{t}): \log|\beta_k|+t_k\le   \log|\beta_i|+t_i\}, \
			\end{equation*}
and
\begin{equation*}
  \mathcal{K}_3(i):=\{1, \dots, d\}\setminus (\mathcal{K}_1(i)\cup \mathcal{K}_2(i)).
\end{equation*}
	\end{theorem}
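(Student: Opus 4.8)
The plan is to prove the two matching bounds for $\dim_{\rm H}W(T,\Psi,\vz)$ separately, in close analogy with the proof of Theorem~\ref{Runbounded} in \S\ref{classicalproof}. Since $T$ is diagonal, identity \eqref{yesyes} realises $W(T,\Psi,\vz)$ as a product-type $\limsup$ set inside $K=\prod_{i=1}^{d}K(\beta_i)$, with the $i$-th factor governed by the $\beta_i$-transformation $T_{\beta_i}$. We therefore work in $\prod_{i=1}^{d}X_i$ with $X_i:=K(\beta_i)\subset\R$ and $\mu_i:=$ normalised Lebesgue measure on $K(\beta_i)$; as $K(\beta_i)$ is a finite union of intervals, $\mu_i$ is $1$-Ahlfors regular, so $\delta_i=1$ for all $i$ and $\prod_i\mu_i$ is comparable to $m_d|_K$. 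The only structural fact about $\beta$-transformations we use is that $T_\beta^{-n}(B)$, for a small interval $B$, is a disjoint union of intervals — one in each depth-$n$ cylinder meeting $T_\beta^{n}(B)$ — of length comparable to $|B|\,|\beta|^{-n}$; this is where the terms $\log|\beta_i|$ enter.

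\emph{Lower bound.} Fix $\mathbf{t}=(t_1,\dots,t_d)\in\UP$, and (after the usual reduction $\psi_i(q)\mapsto\min\{\psi_i(q),(\log q)^{-1}\}$ ensuring $\psi_i(n)\to0$, hence $t_i\ge0$) choose integers $n_\ell$ with $-\log\psi_i(n_\ell)/n_\ell\to t_i$ for every $i$. Using that each $\psi_i$ is non-increasing, the $\limsup$ over $\ell$ of the preimages under $T^{n}$ of the targets $R(\vz,\Psi(n_\ell))$ — taken over a window of times $n$ close to $n_\ell$ (included, as is the range of $q$ in Lemma~\ref{ll2}, to secure ubiquity, and short enough that the ratio $n/n_\ell\to1$) — is a subset $W'$ of $W(T,\Psi,\vz)$ whose $i$-th coordinate balls have radii comparable to $\psi_i(n_\ell)/|\beta_i|^{n}$. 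With $r_n:=e^{-n}$ one gets $v_{i,n}\to \log|\beta_i|+t_i=:v_i$, so \eqref{cond:r-n} holds with $\mathbf{v}=(v_1,\dots,v_d)$ and $\mathcal{L}(\mathbf{v})=\mathcal{L}(\mathbf{t})$. The essential input — the analogue of Lemma~\ref{ll2} — is that the associated blown-up set, obtained by the choice $u_i:=\log|\beta_i|$ (so that each scaled ball $B_{i,n}^{s_{i,n}}$ has radius $r_n^{u_i}=|\beta_i|^{-n}$, the size of a full depth-$n$ cylinder), has full $\prod_i\mu_i$-measure; this is precisely the kind of divergence statement underpinning the measure dichotomy for $W(T,\Psi,\vz)$ and is obtained by adapting the arguments of \cite{HL,LLVZstand} (one first works with $u_i=\log|\beta_i|-\epsilon$, so that the blown-up balls overshoot every cylinder and cover $K$ at each level, and then lets $\epsilon\to0$ using \eqref{fufu} and the continuity of $s_0$ in $\mathbf{u}$). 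Theorem~\ref{pointcase} then gives $\dim_{\rm H}W(T,\Psi,\vz)\ge\dim_{\rm H}W'\ge s_0(\mathbf{u},\mathbf{v})$. Finally, substituting $\delta_k=1$, $u_k=\log|\beta_k|$, $v_k=\log|\beta_k|+t_k$ (so $v_k-u_k=t_k$) into the definition of $s(\mathbf{u},\mathbf{v},i)$ turns $\mathcal{K}_1(i),\mathcal{K}_2(i),\mathcal{K}_3(i)$ into exactly the sets of Theorem~\ref{rectangledimresult} and shows $s(\mathbf{u},\mathbf{v},i)=\xi_i(\mathbf{t})$ for $i\in\mathcal{L}(\mathbf{t})$, while $s(\mathbf{u},\mathbf{v},i)=\#\mathcal{L}(\mathbf{t})$ for $i\notin\mathcal{L}(\mathbf{t})$; hence $s_0(\mathbf{u},\mathbf{v})=\min\{\min_{i\in\mathcal{L}(\mathbf{t})}\xi_i(\mathbf{t}),\ \#\mathcal{L}(\mathbf{t})\}$. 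Taking the supremum over $\mathbf{t}\in\UP$ gives the lower bound. (Unlike in the proof of Proposition~\ref{lowerbound-d}, no case analysis in the choice of $\mathbf{u}$ is needed here: the natural choice $u_i=\log|\beta_i|$ lands directly on $\xi_i(\mathbf{t})$, and the matching upper bound forces it to be optimal.)

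\emph{Upper bound.} When $\UP$ is bounded the assertion is the result of \cite{HL,LLVZstand} (Remark~\ref{uhj}). In general, fix $M>0$ and set $\widetilde\psi_i(n):=\max\{\psi_i(n),e^{-Mn}\}$; then $\mathcal{U}(\widetilde{\Psi})$ is bounded and $W(T,\Psi,\vz)\subseteq W(T,\widetilde\Psi,\vz)$, so the bounded case yields $\dim_{\rm H}W(T,\Psi,\vz)\le\sup_{\widetilde{\mathbf{t}}\in\mathcal{U}(\widetilde{\Psi})}\min\{\min_{1\le i\le d}\xi_i(\widetilde{\mathbf{t}}),\,d\}$. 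Arguing as in the proof of Proposition~\ref{upperbound-unbdd-d} — comparing the coordinates of $\widetilde{\mathbf{t}}$ with $\widetilde t_i<M$ against those with $\widetilde t_i=M$, and passing to a sub-subsequence to recover the genuine $\Psi$-accumulation values in the latter — one produces, for each $\widetilde{\mathbf{t}}$, a point $\mathbf{t}\in\UP$ with $\min_{1\le i\le d}\xi_i(\widetilde{\mathbf{t}})\le\min\{\min_{i\in\mathcal{L}(\mathbf{t})}\xi_i(\mathbf{t}),\,\#\mathcal{L}(\mathbf{t})\}+\epsilon(M)$, where $\epsilon(M)\to0$ as $M\to\infty$; letting $M\to\infty$ finishes the proof.

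\emph{The main obstacle} is the full-measure lemma for the blown-up set — the $\beta$-transformation analogue of Lemma~\ref{ll2}. In the Diophantine setting the analogous fact is powered by Minkowski's theorem (every point is approximable), which has no counterpart here; one must instead exploit the invariance and mixing of the Parry/Yrrap measures together with the cylinder combinatorics of $\beta$-expansions (for non-integer $\beta$ the depth-$n$ cylinders are not all of length $|\beta|^{-n}$, and there is a ``short'' last branch), which is exactly the technical work carried out in \cite{HL,LLVZstand} for the measure dichotomy of matrix shrinking-target sets. The remaining ingredients — the identity $s(\mathbf{u},\mathbf{v},i)=\xi_i(\mathbf{t})$ and the $M\to\infty$ comparison of the $\xi_i$'s — are routine if slightly fiddly book-keeping, and the Ahlfors regularity of $\mu_i=m_1|_{K(\beta_i)}$ required to apply Theorem~\ref{pointcase} is immediate since $K(\beta_i)$ is a finite union of intervals.
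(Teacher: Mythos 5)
Your proposal is correct and follows essentially the same route as the paper: for the lower bound you apply the unbounded Mass Transference Principle (Theorem~\ref{pointcase}) to the $\limsup$ set of preimage balls with $r_\ell=e^{-n_\ell}$, $u_i=\log|\beta_i|-\epsilon$ and $v_i=\log|\beta_i|+t_i$, with the full-measure input being the covering of the torus by the enlarged preimage balls imported from \cite{LLVZstand,HL} (the paper's Lemma~\ref{lem3-1}), then let $\epsilon\to0$ and check $s(\mathbf{u},\mathbf{v},i)=\xi_i(\mathbf{t})$; for the upper bound you truncate $\psi_i$ at $e^{-Mn}$ to reduce to the bounded case and rerun the comparison argument of Proposition~\ref{upperbound-unbdd-d}, exactly as the paper does. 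The only point the paper makes explicit that you fold into the citations is the negative-eigenvalue case (handled there via Markov subsystems, first assuming the moduli exceed $8$ and then approximating), and your extra ``window of times'' around $n_\ell$ is superfluous since the preimages of $z_i$ in full cylinders are already sufficiently dense at each single time $n$.
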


\medskip

\begin{remark} \label{uhj}
The above theorem was established in
\cite[Theorem~12]{LLVZstand}  under the assumption that $\UP$ is bounded  and all eigenvalues are strictly large than one.  In the case of  positive eigenvalues, the non-increasing assumption on the approximation functions  $\psi_i $  is not required. We mention, so as to avoid confusion,  that in the statement of \cite[Theorem~12]{LLVZstand} it is assumed that $
1<\beta_1\le \beta_2\le\cdots\le \beta_d $ but this is not required.
 Furthermore, in \cite[Section~5.2]{LLVZstand} the proof of the lower bound  for $\dim_{\rm H} W(T,\Psi, \vz) $  was generalised  to incorporate negative eigenvalues and this is where the non-increasing assumption on the approximation functions  comes into play.   Subsequently,  staying within the bounded setup,  the  corresponding  upper bound for the dimension was established in \cite{HL}. Indeed, under the assumption that $\UP$ is bounded, our theorem corresponds to the statement of Theorem~1.7 together with Remark 9 in \cite{HL} or equivalently Claim~1 in \cite[Section~5.2]{LLVZstand}.
\end{remark}

\medskip



\begin{proof}[Proof of Theorem~\ref{rectangledimresult}]
As is usual when proving  dimension statements, we establishing  the upper and lower bounds  for $ \dim_{\rm H} W(T,\Psi,\vz) $ separately.  In both cases, we will freely draw upon the frameworks and results used in \cite{HL, LLVZstand}  that deal with the situation when $\UP$ is bounded - see Remark~\ref{uhj} above.   With this in mind, the proofs below are best read in conjunction with the relevant cited aspects of these works.

%

\medskip

{\it Upper bound}.  The goal is to show   that
$$\dim_{\rm H} W(T,\Psi, \vz) \le \sup_{\mathbf{t}\in\UP}\min \big\{\min_{i\in \mathcal{L}(\mathbf{t})}\{{\xi}_i(\mathbf{t})\}, \ \#\mathcal{L}(\mathbf{t})\big\}  \, . $$
We first observe that when $\UP$ is bounded, by definition  we have that $ \#\mathcal{L}(\mathbf{t}) = d$ and so  the desired statement corresponds to \cite[Proposition~4.1]{HL}.   For the proof see \cite[Section~4.1]{HL} and  note that it does not require the non-increasing assumption on the approximating functions.   For the sake of completeness, we mention that in the case the eigenvalues are positive the statement  corresponds to \cite[Proposition~4]{LLVZstand} and is proved in \cite[Section~4.2.1]{LLVZstand}.
Now to deal with the situation that  $\UP$ is unbounded, given that the  `bounded' result exists, we use the same arguments as in the proof of Proposition~\ref{upperbound-unbdd-d} in \S\ref{upperbound-unbdd-d} with obvious notational modification.

\medskip

{\it Lower bound}.   We now turn our attention to establishing the  complementary lower bound for $ \dim_{\rm H} W(T,\Psi, \vz) $. As mentioned in Remark~\ref{uhj}, under the assumption that $\UP$ is bounded, the desired lower bound was established  in
\cite[Section~5.2]{LLVZstand}.  The proof given there built upon the proof of Proposition~5 in \cite[Section~4.2.1]{LLVZstand}  in which the eigenvalues are assumed to be strictly greater than one.

To deal with the unbounded situation we adapt the proof in  \cite[Section~5.2]{LLVZstand}. In short, the  key  change is that we apply  the unbounded Mass Transference Principle (Theorem~\ref{pointcase}) in place of the bounded Mass Transference Principle (appearing as Theorem~11 in \cite{LLVZstand}).  To illustrate how to implement this change, for ease of  clarity suppose that the eigenvalues  are strictly greater than one. Then, the  following statement is the upshot of formulae (53) and (54) in \cite[\S4.2.1]{LLVZstand} and is  at the heart of establishing Proposition~5 in \cite[Section~4.2.1]{LLVZstand}.  It has nothing to do with $\UP$ being  bounded.

\begin{lemma}\label{lem3-1}
Under the setting of Theorem \ref{rectangledimresult}, {if in addition  the eigenvalues are positive}, we have that
   \begin{equation}\label{limsupsubset}
 W(T,\Psi,\vz)\supset\limsup_{n\to\infty}\bigcup_{j_1=1}^{M_{1,n}} \cdots\bigcup_{j_d=1}^{M_{d,n}} B\Big(x_{n,j_1}^{{(1)}},\beta_1^{-n}\psi_1(n)\Big)\times\cdots\times B\Big(x_{n,j_d}^{(d)},\beta_d^{-n}\psi_d(n)\Big) \, ,
 \end{equation}
 and for each $1\leq i\leq d$
  \begin{equation}  \label{meadded}
 \T  \ = \  \bigcup_{j_i=1}^{M_{i,n}} B\big(x_{n, j_i}^{(i)}, (n+3)\beta_i^{-n}\big) \, .
 \end{equation}
Here, for each $1\leq i\leq d$, the centres  $\{x_{n,j_i}^{(i)}, 1\le   j_i\le   M_{i,n}\}$ are the preimages of $z_i$ under $T_{\beta_i}^n$ that fall within full cylinders  of order $n$ for $T_{\beta_i}$ and  $M_{i,n}$ is the number of such full cylinders.
\end{lemma}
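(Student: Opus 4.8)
The plan is to read off \eqref{limsupsubset} and \eqref{meadded} from the standard analysis of full cylinders for $\beta$-transformations, essentially as carried out in \cite[\S4.2.1]{LLVZstand}; indeed the two displays are precisely the content of formulae (53)--(54) there, and the task is to recall how they arise. First I would collect the relevant one-dimensional facts. Fix $1\le i\le d$ and $n\in\N$. A cylinder of order $n$ for $T_{\beta_i}$ is an interval on which the greedy $\beta_i$-expansion has a prescribed length-$n$ prefix; on such a cylinder $I$ the iterate $T_{\beta_i}^n$ is increasing and affine and maps $I$ onto an interval of the form $[0,c(I))$ with $c(I)\le 1$, the cylinder being \emph{full} when $c(I)=1$. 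On a full cylinder $I$ the slope of $T_{\beta_i}^n|_I$ is $\beta_i^n$, so $|I|=\beta_i^{-n}$ and $I$ carries a unique preimage of $z_i$ under $T_{\beta_i}^n$, sitting at distance $z_i\beta_i^{-n}$ from the left endpoint of $I$; I would let $\{x^{(i)}_{n,j_i}:1\le j_i\le M_{i,n}\}$ enumerate these preimages over the $M_{i,n}$ full cylinders of order $n$.

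For \eqref{meadded} I would invoke the distribution property of full cylinders, namely that the full cylinders of order $n$ are $(n+2)\beta_i^{-n}$-dense in $[0,1)$ --- a fact about $T_{\beta_i}$ obtained by bounding the length of a maximal run of consecutive non-full cylinders of order $n$ linearly in $n$ via the combinatorics of the expansion of $1$, which is where \cite{LLVZstand} does the real work. Combining this with the facts that each full cylinder has diameter $\beta_i^{-n}$ and contains the corresponding $x^{(i)}_{n,j_i}$ then shows that every point of $\T$ lies within $(n+2)\beta_i^{-n}+\beta_i^{-n}=(n+3)\beta_i^{-n}$ of some $x^{(i)}_{n,j_i}$, which is exactly \eqref{meadded}.

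For \eqref{limsupsubset} I would first reduce to $\psi_i(n)\to 0$ for each $i$ (the harmless reduction used in the proof of Proposition~\ref{lowerbound-d} applies verbatim). Then for all large $n$ and every full cylinder $I_i$ of order $n$ the ball $B\big(x^{(i)}_{n,j_i},\beta_i^{-n}\psi_i(n)\big)$ lies inside $I_i$, since its radius is $o(\beta_i^{-n})=o(|I_i|)$ whereas $x^{(i)}_{n,j_i}$ sits at the fixed relative position $z_i$ inside $I_i$ --- the sole exceptional location $z_i\equiv 0\pmod 1$ being handled by using instead the half-ball to the right of $x^{(i)}_{n,j_i}$, which changes nothing below. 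As $T_{\beta_i}^n$ maps $I_i$ affinely with slope $\beta_i^n$ and sends $x^{(i)}_{n,j_i}$ to $z_i$, it maps this ball into $B(z_i,\psi_i(n))$; hence any $\vx$ in the product $\prod_{i=1}^d B\big(x^{(i)}_{n,j_i},\beta_i^{-n}\psi_i(n)\big)$ satisfies $\|T_{\beta_i}^n x_i-z_i\|\le\psi_i(n)$ for every $i$, i.e.\ $T^n\vx\in R(\vz,\Psi(n))$. Consequently any point lying in such a product for infinitely many $n$ belongs to $W(T,\Psi,\vz)$, which is \eqref{limsupsubset}.

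The only genuinely non-routine ingredient is the density estimate for full cylinders underpinning \eqref{meadded}; everything else --- the affine action on full cylinders, the containment of the shrunk balls, and the passage to the limsup --- is bookkeeping. Since that density estimate is a purely one-dimensional statement about the maps $T_{\beta_i}$ and is already available from \cite{LLVZstand} (formulae (53)--(54)), I expect no real obstacle in assembling the lemma; the only care needed is the boundary case $z_i\equiv 0\pmod 1$ noted above.
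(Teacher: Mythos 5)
Your proposal is correct and takes essentially the same route as the paper, which does not reprove the lemma but simply records it as the upshot of formulae (53) and (54) in \cite[\S 4.2.1]{LLVZstand}; your sketch is exactly the standard full-cylinder derivation behind those formulae (full cylinders of order $n$ have length $\beta_i^{-n}$ and each carries one preimage of $z_i$, full cylinders are $O(n)\beta_i^{-n}$-dense, and the affine branch argument transports the shrunk balls into $B(z_i,\psi_i(n))$), with the substantive density estimate deferred to \cite{LLVZstand} just as the paper defers the whole statement. The only caveat is that your reduction to $\psi_i(n)\to 0$ and the half-ball fix at $z_i\equiv 0$ strictly yield the inclusion for a slightly smaller limsup set than the one displayed in \eqref{limsupsubset}, which is harmless for the dimension lower bound the lemma is used for.
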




We now show that the desired lower bound (in the case the  eigenvalues  are strictly greater than one) is a consequence of Lemma~\ref{lem3-1} and Theorem~\ref{pointcase}.  So with this in mind, let $ \mathbf{t}=(t_1,t_2, \ldots, t_d )\in \UP $. Then, by definition
there exists an integer subsequence $\{n_{\ell}\}_{\ell\in \mathbb{N}}$ such that
$$
\lim_{\ell\to\infty} {-\log \psi_i(n_\ell) \over n_\ell} = t_i\qquad \forall \ 1\le i\le d \, .  $$
In turn,  consider the  following $\limsup$ set
 \begin{equation*}
 W=\limsup_{\ell\to\infty}\bigcup_{j_1=1}^{M_{1,n_\ell}} \cdots\bigcup_{j_d=1}^{M_{d,n_\ell}} B\Big(x_{n_\ell,j_1}^{{(1)}},\beta_1^{-n_\ell}\psi_1(n_\ell)\Big)\times\cdots\times B\Big(x_{n_\ell,j_d}^{(d)},\beta_d^{-n_\ell}\psi_d(n_\ell)\Big) \,.
 \end{equation*}
In view of \eqref{limsupsubset},
$$ W \subset W(T,\Psi,\vz)   \,  . $$
With  Theorem \ref{pointcase} in mind,
for $\ell\in\mathbb{N}$, let
\[
r_\ell :=e^{-n_\ell}
\]
and for any $\epsilon>0$ and  $1\leq i \leq d$, let
\[
\delta_i=1,  \,  \qquad  u_i=\log \beta_i-\epsilon, \qquad v_{i, \ell}= \log\beta_i + {-\log \psi_i(n_\ell) \over n_\ell}   \, . 
\]
Then, for $1\leq i \leq d$ and $ \ell \in \N$ we have that
\[
r_\ell^{v_{i,\ell}}=\beta_i^{-n_\ell}\psi_i(n_\ell)
\]
and
\[
v_i:=\lim_{\ell\to\infty}v_{i, \ell}=\log \beta_i+t_i \, .
\]
Furthermore, with
\[s_{i,\ell}:=\frac{u_i}{v_{i,\ell}} \, , \] it follows that for  $\ell$  sufficiently large
\begin{equation}\label{biggerB}
(\beta_i^{-n_\ell}\psi_i(n_\ell))^{s_{i,\ell}}=r_\ell^{u_i}=\beta_i^{-n_\ell}e^{n_\ell \epsilon}>(n_\ell+3)\beta_i^{-n_\ell}.
\end{equation}
The upshot is that
 if
$$
 B_{i,\ell} := B\big(x_{n_\ell,j_i}^{{(i)}},\beta_i^{-n_\ell}\psi_i(n_\ell)\big)
$$
is a ball appearing in the definition of $W$, then by \eqref{biggerB}, for   $\ell$  sufficiently large
$$
B_{i,\ell}^{s_{i,\ell}}  \supset \textstyle{B\Big(x_{n_\ell,j_i}^{{(i)}},(n_\ell+3)\beta_i^{-n_\ell}\Big)}  \qquad \forall \ 1\le i\le d \, .   $$
Thus, in view of \eqref{meadded}, given the $\limsup $ set $W$ arising from  balls $B_{i,\ell}$ the corresponding   $\limsup$ set comprising of `enlarged' balls $B_{i,\ell}^{s_{i,\ell}}$ is equal to $\T^d$ and thus is  of  full $d$-dimensional Lebesgue measure.
Therefore, in view of  Theorem \ref{pointcase} we conclude that
\begin{equation} \label{poll1} \dim_{\rm H}
W(T,\Psi,\vz) \ge \ \dim_{\rm H} W \geq \   \min\left\{\min_{i\in \mathcal{L}(\mathbf{v})}s(\mathbf{u}{(\epsilon)} \,\mathbf{v},i), \ \#\mathcal{L}(\mathbf{v})\right\}
\end{equation}
with  $\mathbf{u}{(\epsilon)}=(\log \beta_1-\epsilon,\dots, \log \beta_d-\epsilon) $ and   $\mathbf{v}=  (\log \beta_1, \dots, \log \beta_d) + \mathbf{t}$.
It is evident that $\mathcal{L}(\mathbf{v}) =\mathcal{L}(\mathbf{t})$ and  that for all $1\leq i \leq d$
		\begin{equation*}
			s(\mathbf{u}{(\epsilon)},\mathbf{v},i)=\xi_i(\mathbf{t}, \epsilon):=
			\sum_{k\in \mathcal{K}_1(i)}1+\sum_{k\in \mathcal{K}_2(i)}\left(1-\frac{t_k}{\log \beta_i-\epsilon+t_i}\right) +\sum_{k\in \mathcal{K}_3(i)}\frac{\log \beta_k-\epsilon}{\log \beta_i-\epsilon+t_i}
		\end{equation*}
where
			\begin{equation*}
			\mathcal{K}_1(i):= \{1\le   k\le   d: \log \beta_k>\log \beta_i+t_i\}, \ \mathcal{K}_2(i):= \{k\in\mathcal{L}(\mathbf{t}): \log \beta_k+t_k\le   \log \beta_i+t_i\}, \
			\end{equation*}
and
\begin{equation*}
  \mathcal{K}_3(i):=\{1, \dots, d\}\setminus (\mathcal{K}_1(i)\cup \mathcal{K}_2(i)).
\end{equation*}
However, $\epsilon>0$ is arbitrary and so
\[
\dim_{\rm H}
W(T,\Psi,\vz) \ge  \   \min\left\{\min_{i\in \mathcal{L}(\mathbf{t})}\xi_i(\mathbf{t}), \ \#\mathcal{L}(\mathbf{t})\right\}.
\]
This is valid for any $ \mathbf{t}\in \UP $ and so we obtain the desired  lower bound when  the  eigenvalues  are strictly greater than one.

\medskip

We now  turn  our attention to establishing the lower bound for $ \dim_{\rm H} W(T,\Psi, \vz) $ when the eigenvalues  of $T$ are  negative.  For this we  follow the argument in  \cite[Section~5.2]{LLVZstand} and start by  exploiting  the framework of Markov subsystems to obtain the subset $W^*(T,\Psi,\vz)$  of $W(T,\Psi,\vz)$ under the assumption that the absolute values of the eigenvalues are $> 8$.  Then for this subset, we have an analogue of  Lemma~\ref{lem3-1},  without the assumption that the eigenvalues are positive, in which \eqref{limsupsubset} and \eqref{meadded} are respectively replaced by (69) and the displayed formula immediately following (69) in \cite[Section~5.2]{LLVZstand}.    As in the case of positive eigenvalues,  this has nothing to do with $\UP$ being bounded.
By the same arguments as in the above proof for positive eigenvalues, we  make use of Theorem \ref{pointcase} to obtain a lower bound for  $\dim_{\rm H} W^*(T,\Psi,\vz)$ (and hence $\dim_{\rm H} W^(T,\Psi,\vz)$) for the case that $\UP$ is unbounded.  In order to remove the assumption that the absolute values of all the eigenvalues of $T$ are $> 8$, we use the same approximating process employed at the end of \cite[Section~5.2]{LLVZstand}.

%
%

\end{proof}

\section{Properties of the  dimensional number} \label{Subsec:dim-number}

When it comes to proving Theorem~\ref{pointcase}, it will be more convenient to  work with a slightly more general formulation  of the dimensional number than that introduced at the start of \S\ref{MTP}. As before,  let  $p$ be a positive integer and let $\boldsymbol{\delta}=(\delta_1,\dots, \delta_p)\in (\mathbb{R}^+)^p $ be given.  Also,  for any
 $\mathbf{u}=(u_1,\dots, u_p)\in (\mathbb{R}^+)^p $ and
$\mathbf{v}=(v_1, \ldots,v_p)\in (\mathbb{R}^+\cup\{+\infty\})^p$  satisfying   \eqref{111},  let $\mathcal{L}(\mathbf{v})$ and $\mathcal{L}_\infty(\mathbf{v})$ be as in
\eqref{112} and let
$$
 \mathcal{A}=\{u_i : 1\le i\le p\}  \cup \{v_i: 1\le i\le p\}.
 $$
 For  each $A\in \mathcal{A}$, if
\begin{itemize}
  \item $A<+\infty$, we  let
\begin{equation} \label{def_suv}
	{s}(\mathbf{u}, \mathbf{v},A):=
	\sum_{k\in \mathcal{K}_1(A)}\delta_k+\sum_{k\in \mathcal{K}_2(A)}\delta_k\left(1-\frac{v_k-u_k}{A}\right) +\sum_{k\in \mathcal{K}_3(A)}\frac{\delta_ku_k}{A}
\end{equation}
where
\begin{equation*}
	\mathcal{K}_1(A):= \{1\le   k\le   p: u_k> A\}, \qquad  \mathcal{K}_2(A):= \{k\in\mathcal{L}(\mathbf{v}): v_k \le  A\}, \
\end{equation*}
 and $
	\mathcal{K}_3(A):=\{1, \dots, p\}\setminus (\mathcal{K}_1(A)\cup \mathcal{K}_2(A))$ .

\item $A=+\infty$, we let
$$
{s}(\mathbf{u}, \mathbf{v},A):=\sum_{k\in\mathcal{L}(\mathbf{v})}\delta_k.
$$
\end{itemize}
When $A=v_i$, we write $s(\bold{u},\bold{v}, i)$ for  $s(\bold{u},\bold{v}, A)$ and when $A=u_i$ we write $\overline{s}(\bold{u},\bold{v}, i)$ for  $s(\bold{u},\bold{v}, A)$.  Note that in the latter case, since $\mathbf{u}\in (\mathbb{R}^+)^p $,  we are always in the situation $ A < + \infty$. Formally, for every $i=1,\dots,p$ we define
\[
\begin{aligned}
	s(\bold{u},\bold{v}, i)&:=s(\bold{u},\bold{v}, v_i),\\[2ex]
	\overline{s}(\bold{u},\bold{v}, i)&:=s(\bold{u},\bold{v}, u_i).
\end{aligned}
\]
Clearly, the quantity  $s(\bold{u},\bold{v}, i)$ is precisely the same as that appearing in  \S\ref{MTP} and we recall that the dimensional number $s_0(\mathbf{u},\mathbf{v})$ with respect to $\mathbf{u}$ and $\mathbf{v}$  is defined as 
\begin{equation*} 
s_0(\mathbf{u},\mathbf{v})   =    \min_{1 \le i \le p }  \{ {s}(\mathbf{u}, \mathbf{v},i) \}    =  \min \Big\{\min_{i\in \mathcal{L}(\mathbf{v})}\{ {s}(\mathbf{u}, \mathbf{v},i) \}, \  \sum_{i\in\mathcal{L}(\mathbf{v})}\delta_i\Big\}.
\end{equation*}
The main difference in the above presentation to that in  \S\ref{MTP} is that it allows us to bring into play the quantity $\overline{s}(\mathbf{u}, \mathbf{v}, i)$.   This is important since it  will naturally appear in the proof of Theorem~\ref{pointcase}.  Indeed, one initially ends up with  showing  that
\begin{eqnarray*}
			\dim_{\rm H}\left(\limsup_{n\to\infty} {\prod_{i=1}^p}B_{i,n} \right)   & \ge   &   \min \Big\{   \min_{  A \in \mathcal{A} }  \{ {s}(\mathbf{u}, \mathbf{v},A) \},  \sum_{1 \le i \le p } \delta_i  \Big\}  \\[2ex]  & =  &
\min \Big\{\min_{i\in \mathcal{L}(\mathbf{v})}\{ {s}(\mathbf{u}, \mathbf{v},i) \}, \   \min_{1\leq i\leq p}\{ \overline{s}(\mathbf{u}, \mathbf{v},i) \},\  \sum_{i\in\mathcal{L}(\mathbf{v})}\delta_i\Big\}  \, .
	\end{eqnarray*}	
However, the following statement  shows that the   middle term on the right hand side involving $\overline{s}(\mathbf{u}, \mathbf{v}, i)$ plays no role and so the right hand side can be simply replaced by the dimensional number $s_0(\mathbf{u},\mathbf{v})$.

\begin{proposition}\label{Prop:4}
 For any $\mathbf{u}=(u_1,\cdots, u_p)\in (\mathbb{R}^+)^p$
 and $\mathbf{v}=(v_1, \ldots,v_p)\in (\mathbb{R}^+\cup\{+\infty\})^p$ satisfying \eqref{111}, we have
 \begin{align}
 \min_{1\leq i\leq p}\{ \overline{s}(\mathbf{u}, \mathbf{v},i) \}   \ge
 s_0(\mathbf{u},\mathbf{v}) .
 \end{align}
In particular,
 \begin{align}\label{dim-numb}
 s_0(\mathbf{u},\mathbf{v})=\min \Big\{\min_{i\in \mathcal{L}(\mathbf{v})}\{ {s}(\mathbf{u}, \mathbf{v},i) \}, \   \min_{1\leq i\leq p}\{ \overline{s}(\mathbf{u}, \mathbf{v},i) \},\  \sum_{i\in\mathcal{L}(\mathbf{v})}\delta_i\Big\}.
 \end{align}
\end{proposition}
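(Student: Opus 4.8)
The plan is to fix an index $i_0 \in \{1,\dots,p\}$ achieving the minimum of $\overline{s}(\mathbf{u},\mathbf{v},i)$ over $1\le i\le p$ and to show directly that $\overline{s}(\mathbf{u},\mathbf{v},i_0)\ge s(\mathbf{u},\mathbf{v},j)$ for a suitably chosen $j\in\mathcal{L}(\mathbf{v})$, or else $\overline{s}(\mathbf{u},\mathbf{v},i_0)\ge\sum_{i\in\mathcal{L}(\mathbf{v})}\delta_i$; either way the conclusion follows from the definition \eqref{dimensionalnumber} of $s_0$. Write $A:=u_{i_0}$; note $A<+\infty$ since $\mathbf{u}\in(\mathbb{R}^+)^p$. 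The natural candidate for $j$ is an index $j\in\mathcal{L}(\mathbf{v})$ with $v_j$ as close to $A$ from above as possible; more precisely, one should compare $\overline{s}(\mathbf{u},\mathbf{v},A)=s(\mathbf{u},\mathbf{v},A)$ with $s(\mathbf{u},\mathbf{v},A')$ for $A'\in\mathcal{A}$ ranging over a path from $A=u_{i_0}$ upward. So first I would establish a monotonicity-type lemma describing how the scalar function $A\mapsto s(\mathbf{u},\mathbf{v},A)$ behaves.

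The key computational step is to understand $\dfrac{\partial}{\partial A}s(\mathbf{u},\mathbf{v},A)$ on each interval of $A$ on which the index sets $\mathcal{K}_1(A)$, $\mathcal{K}_2(A)$, $\mathcal{K}_3(A)$ are constant. On such an interval,
$$
s(\mathbf{u},\mathbf{v},A)=\sum_{k\in\mathcal{K}_1}\delta_k+\sum_{k\in\mathcal{K}_2}\delta_k-\frac{1}{A}\Big(\sum_{k\in\mathcal{K}_2}\delta_k(v_k-u_k)-\sum_{k\in\mathcal{K}_3}\delta_k u_k\Big),
$$
so $s$ is of the form $c - C/A$ with $c,C$ constant on the interval; hence $s(\mathbf{u},\mathbf{v},\cdot)$ is monotone (increasing if $C\ge 0$, decreasing if $C<0$) on each such interval, and one checks continuity of $A\mapsto s(\mathbf{u},\mathbf{v},A)$ at the breakpoints $A=u_k$ and $A=v_k$ (the definitions of $\mathcal{K}_1,\mathcal{K}_2,\mathcal{K}_3$ are arranged, via the condition $u_k\le v_k$, precisely so that the formula matches up across a breakpoint — this is the same bookkeeping used in the remarks after \eqref{dimensionalnumber}). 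Consequently, on the closure of each monotonicity interval the function attains its minimum at one of the two endpoints, which lie in $\mathcal{A}\cup\{+\infty\}$. Starting at $A=u_{i_0}$ and moving in the direction in which $s$ decreases (if any), we either arrive at a point $A'=v_j\in\mathcal{A}$ with $j\in\mathcal{L}(\mathbf{v})$ and $s(\mathbf{u},\mathbf{v},u_{i_0})\ge s(\mathbf{u},\mathbf{v},v_j)\ge \min_{i\in\mathcal{L}(\mathbf{v})}s(\mathbf{u},\mathbf{v},i)\ge s_0(\mathbf{u},\mathbf{v})$, or else $s$ is non-increasing all the way to $A=+\infty$, giving $s(\mathbf{u},\mathbf{v},u_{i_0})\ge s(\mathbf{u},\mathbf{v},+\infty)=\sum_{k\in\mathcal{L}(\mathbf{v})}\delta_k\ge s_0(\mathbf{u},\mathbf{v})$. (If one moves in the increasing direction the inequality is only easier.) One small point to handle: when moving upward from $u_{i_0}$ one may pass breakpoints of the form $u_k$ which are not values $v_j$; there the function is still continuous, so these cause no harm — we only need that the eventual resting point is either some $v_j$ with $j\in\mathcal{L}(\mathbf{v})$ or $+\infty$, and the only way to avoid the former while exiting all finite breakpoints is to reach $+\infty$.

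This proves the displayed inequality $\min_{1\le i\le p}\overline{s}(\mathbf{u},\mathbf{v},i)\ge s_0(\mathbf{u},\mathbf{v})$; the "in particular" identity \eqref{dim-numb} is then immediate, since inserting the extra term $\min_{1\le i\le p}\overline{s}(\mathbf{u},\mathbf{v},i)$ inside the outer minimum cannot lower the value below $s_0(\mathbf{u},\mathbf{v})$ (by the inequality just proved) and cannot raise it (since $s_0$ is already a minimum over a subcollection). I expect the main obstacle to be the careful case analysis at the breakpoints — verifying that the piecewise formula for $s(\mathbf{u},\mathbf{v},A)$ is genuinely continuous in $A$ as $A$ crosses each $u_k$ and each $v_k$, and correctly tracking how membership in $\mathcal{K}_1,\mathcal{K}_2,\mathcal{K}_3$ changes there (in particular the asymmetric roles of $u_k$ and $v_k$, and the restriction $k\in\mathcal{L}(\mathbf{v})$ in $\mathcal{K}_2$). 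Once that bookkeeping is pinned down, the monotone-on-each-piece argument is routine, and alternatively one could avoid the derivative computation entirely by a direct finite comparison: for the specific $i_0$, exhibit an explicit $j\in\mathcal{L}(\mathbf{v})$ (say, the one minimizing $v_j$ subject to $v_j\ge u_{i_0}$, or the index of $\max_{k\in\mathcal{L}(\mathbf{v})}v_k$ if no such $v_j$ exists) and verify $\overline{s}(\mathbf{u},\mathbf{v},i_0)\ge s(\mathbf{u},\mathbf{v},j)$ term-by-term using $u_k\le v_k$ and the sign conditions defining the $\mathcal{K}$-sets.
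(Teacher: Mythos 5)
Your overall strategy --- treating $A\mapsto s(\mathbf{u},\mathbf{v},A)$ as a continuous function that on each interval between breakpoints has the form $c-C/A$, and deducing that its value at $A=u_{i_0}$ is bounded below by its value at some $v_j$ or at $+\infty$ --- is viable, and it is genuinely different from the paper's proof: the paper fixes $\epsilon>0$, replaces the infinite components of $\mathbf{v}$ by a large finite $v^*(\epsilon)$, invokes the bounded-case result \cite[Proposition 3.1]{WW2021} for the truncated vector $\mathbf{v}^*$, and lets $\epsilon\to 0$. However, as written your argument has a gap at exactly the step you pass over. Your dichotomy (``either we arrive at some $v_j$, or $s$ is non-increasing all the way to $+\infty$'') is justified only by continuity at the breakpoints, and continuity is not enough: a priori the descending branch issuing from $u_{i_0}$ could turn from decreasing to increasing at another breakpoint of the form $A=u_k$. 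In that case the resting point of your path is $u_k$, whose value is $\overline{s}(\mathbf{u},\mathbf{v},k)$ --- precisely the kind of quantity the proposition is trying to bound from below --- and the argument is circular. (Continuity across the breakpoints, which you single out as the main obstacle, is in fact the routine part; cf.\ Remark~\ref{rem:7}.)

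The missing ingredient is a sign computation that your own setup makes immediate but which you never carry out. With $C=\sum_{k\in\mathcal{K}_2}\delta_k(v_k-u_k)-\sum_{k\in\mathcal{K}_3}\delta_k u_k$, crossing $A=u_k$ in the increasing direction moves $k$ from $\mathcal{K}_1$ to $\mathcal{K}_3$ and replaces $C$ by $C-\delta_k u_k$, whereas crossing $A=v_j$ (for $j\in\mathcal{L}(\mathbf{v})$) moves $j$ from $\mathcal{K}_3$ to $\mathcal{K}_2$ and replaces $C$ by $C+\delta_j v_j$. Hence a branch on which $s$ is non-increasing (i.e.\ $C\le 0$) remains non-increasing across every $u_k$-breakpoint, so the descent can only halt at some $v_j$, in the limit $A\to+\infty$ (value $\sum_{k\in\mathcal{L}(\mathbf{v})}\delta_k$), or on the constant piece $A<\min_k u_k$ (value $\sum_{k=1}^p\delta_k\ge\sum_{k\in\mathcal{L}(\mathbf{v})}\delta_k\ge s_0(\mathbf{u},\mathbf{v})$); in every case the required inequality follows. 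The same computation also settles two points your sketch leaves open: a non-increasing direction from $A=u_{i_0}$ always exists when needed (if $C>0$ just to the right of $u_{i_0}$ then $C=C_{\mathrm{right}}+\delta_{i_0}u_{i_0}>0$ just to the left, so moving left decreases $s$), and the leftward journey, including the endpoint $A\to 0^{+}$, terminates at a value already accounted for. With these additions your proof is complete and, unlike the paper's, self-contained --- it reproves rather than quotes the bounded case of Wang \& Wu; without them the key dichotomy is unsupported.
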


The  above proposition can be viewed as the unbounded version of \cite[Proposition 3.1]{WW2021}.

\bigskip

\begin{remark}\label{rem:7}
It is useful to note that  the definitions of $\K_1(A)$, $\K_2(A)$ and $\K_3(A)$ can be modified to either
$$
\left\{
  \begin{array}{ll}
    \widetilde{\K}_1(A):=\K_1(A)\cup \{ 1\le k\le p : u_k=A\}, & \hbox{} \\[1ex]
    \widetilde{\K}_2(A):=\K_2(A)\setminus \{k\in \mathcal{L}(\bold{v}): v_k=A\}, & \hbox{} \\[1ex]
    \widetilde{\K}_3(A):=\{1, \dots, p\}\setminus (\widetilde{\mathcal{K}}_1(A)\cup \widetilde{\mathcal{K}}_2(A)), & \hbox{}
  \end{array}
\right. {\text{\!\!\!or}}\quad  \left\{
  \begin{array}{ll}
     \widehat{\K}_1(A):=\K_1(A), & \hbox{} \\[1ex]
    \widehat{\K}_2(A):= \widetilde{\K}_2(A), & \hbox{} \\[1ex]
    \widehat{\K}_3(A):=\{1, \dots, p\}\setminus (\widehat{\K}_1(A)\cup \widehat{\K}_2(A)), & \hbox{}
  \end{array}
\right.\
 $$   without effecting the outcome.   More precisely,  if $\widetilde{s}(\bold{u},\bold{v}, A)$ and $\widehat{s}(\bold{u},\bold{v}, A)$ denote the corresponding modified  analogues of ${s}(\mathbf{u}, \mathbf{v},A) $  associated with $\{ \widetilde{\K}_i(A) : 1 \le  i \le 3 \}  $  and  $ \{ \widehat{\K}_i(A) : 1 \le  i \le 3 \}  $ respectively,  then
 \begin{equation}\label{hjg}
   \widetilde{s}(\bold{u},\bold{v}, A)=  {s}(\bold{u},\bold{v}, A)    \qquad {\rm and } \qquad \widehat{s}(\bold{u},\bold{v}, A)=  {s}(\bold{u},\bold{v}, A)   \, .
 \end{equation}
Indeed, to see that this is the case, first note that
 $$
\begin{aligned}
\widetilde{\K}_3(A)&=\K_3(A)\cup \{k\in \mathcal{L}(\bold{v}): v_k=A\}\setminus\{k\in \mathcal{L}(\bold{v}): u_k=A\}.
\end{aligned}
$$
Now when $A=\infty$,   it is easily seen that
\[
\{k\in \mathcal{L}(\bold{v}): v_k=A\}=\{1\le k\le p: u_k=A\}=\emptyset
\]
and  so  $ \widetilde{\K}_i(A)=\K_i(A) $ for $i=1,2,3$ and   we are done. For the case  $A < \infty$,  it follows that
\begin{align*}
\widetilde{s}(\bold{u},\bold{v}, A)  \ & =  \   \sum_{k\in \mathcal{K}_1(A)}\delta_k
+\sum_{1\le k\le p: u_k=A}\delta_k
+\sum_{k\in \mathcal{K}_2(A)}\delta_k\left(1-\frac{v_k-u_k}{A}\right)
\\[2ex]
&  \qquad \qquad - \quad \sum_{k\in \mathcal{L}(\bold{v}): v_k=A}\delta_k\left(1-\frac{v_k-u_k}{A}\right)
 +\sum_{k\in \mathcal{K}_3(A)}\frac{\delta_ku_k}{A}
 \\[2ex]
&  \qquad  \qquad
 + \quad \sum_{k\in \mathcal{L}(\bold{v}): v_k=A}\frac{\delta_ku_k}{A}-\sum_{1\le k\le p: u_k=A}\frac{\delta_ku_k}{A}\\[3ex]
 &=\sum_{k\in \mathcal{K}_1(A)}\delta_k+\sum_{k\in \mathcal{K}_2(A)}\delta_k\left(1-\frac{v_k-u_k}{A}\right)
 +\sum_{k\in \mathcal{K}_3(A)}\frac{\delta_ku_k}{A} \ = \ s(\bold{u},\bold{v}, A).
\end{align*}
This establishes  the first of the equalities  in  \eqref{hjg}.  The proof can be modified in the obvious manner to prove the other equality in  \eqref{hjg} and we  leave the details to the reader.
\end{remark}

\medskip

The next result  concerns the continuity of the dimensional number  $s_0(\bold{u}, \bold{v})$.  Apart from  playing   a role in establishing Theorem~\ref{pointcase}, it has already been exploited  within the context of Remark~\ref{111_to_222S} in \S\ref{MTP}.

 \begin{proposition}\label{ll1} For any fixed $\mathbf{u}=(u_1,\cdots, u_p)\in (\mathbb{R}^+)^p$,
the quantities  $s(\bold{u},\bold{v},i)$ and  $\overline{s}(\bold{u},\bold{v},i)$
are continuous with respect to $\bold{v}\in \prod_{i=1}^p[u_i, +\infty]$ in the following  sense: let $\{\bold{v}(n) \}_{n \in \N}$ be a sequence of vectors $\bold{v}(n)=   (v_{1,n},\cdots, v_{p,n})  \in (\mathbb{R}^+)^p$
with  $u_i\le v_{i,n}$ for all $1\le i\le p$ and  $n$ sufficiently large,  and converging to
 $\bold{v} \in (\mathbb{R}^+\cup\{+\infty\})^p$, then
 \begin{equation}\label{x1}
\liminf_{n\to\infty}\; {s}(\bold{u}, \bold{v}{(n)},i) \ \ge \  {s}(\bold{u}, \bold{v},i), \qquad
\liminf_{n\to\infty} \; \overline{s}(\bold{u}, \bold{v}{(n)},i) \ = \  \overline{s}(\bold{u}, \bold{v},i),
 \end{equation}
 and
 \begin{equation}\label{x2}
 \lim_{n\to\infty}\min\Big\{s(\bold{u}, \bold{v}{(n)}, i): 1\le i\le p\Big\} \ = \ \min\Big\{s(\bold{u}, \bold{v}, i): 1\le i \le p\Big\}.
 \end{equation}
 \end{proposition}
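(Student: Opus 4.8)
The plan is to prove continuity by carefully tracking how the index sets $\mathcal{K}_1, \mathcal{K}_2, \mathcal{K}_3$ (defined relative to a threshold $A$) behave under small perturbations of $\mathbf{v}$, and to exploit the freedom to use the equivalent formulations $\widetilde{\mathcal{K}}_j$ and $\widehat{\mathcal{K}}_j$ from Remark~\ref{rem:7} to handle the boundary cases where $v_k = A$. I will treat the three displayed assertions in turn, isolating first the easy cases and then the one that causes trouble.

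First, consider $\overline{s}(\mathbf{u}, \mathbf{v}(n), i) = s(\mathbf{u}, \mathbf{v}(n), u_i)$. Here the threshold $A = u_i$ is \emph{fixed} (independent of $n$), so only the quantities $v_{k,n} \to v_k$ inside the formula \eqref{def_suv} move. The index set $\mathcal{K}_1(u_i) = \{k : u_k > u_i\}$ does not depend on $n$ at all. For $\mathcal{K}_2(u_i) = \{k \in \mathcal{L}(\mathbf{v}(n)) : v_{k,n} \le u_i\}$: since $u_k \le v_{k,n}$, if $v_{k,n} \le u_i$ then $u_k \le u_i$, i.e. $k \notin \mathcal{K}_1(u_i)$; and by \eqref{111} the limit $v_k \ge u_k$. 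If $v_k < u_i$ then eventually $v_{k,n} < u_i$ so $k \in \mathcal{K}_2(u_i)$ for large $n$; if $v_k > u_i$ (including $v_k = +\infty$) then eventually $k \notin \mathcal{K}_2(u_i)$. The only ambiguous case is $v_k = u_i$, but then $v_{k,n} - u_k \to v_k - u_k = u_i - u_k$, so whether $k$ is counted in $\mathcal{K}_2(u_i)$ (contributing $\delta_k(1 - (v_{k,n}-u_k)/u_i) \to \delta_k u_k/u_i$) or in $\mathcal{K}_3(u_i)$ (contributing $\delta_k u_k/u_i$), the summand converges to the same value. Hence $\overline{s}(\mathbf{u}, \mathbf{v}(n), i)$ converges — in fact it \emph{converges}, giving the equality claimed in \eqref{x1} (not merely a $\liminf$ bound); this is why the paper writes ``$=$'' there. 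I will also note that when $v_i = +\infty$ we must check $\overline{s}$ stays finite, which is immediate since the formula for $A = u_i < \infty$ only ever involves finitely many bounded terms.

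Next, the inequality $\liminf_n s(\mathbf{u}, \mathbf{v}(n), i) \ge s(\mathbf{u}, \mathbf{v}, i)$. Now the threshold is $A = v_{i,n} \to v_i$, so \emph{both} the threshold and the entries move, and this is the main obstacle. I split into the case $v_i < +\infty$ and $v_i = +\infty$. When $v_i = +\infty$: then $v_{i,n} \to \infty$, and I claim $s(\mathbf{u}, \mathbf{v}(n), i) \to \sum_{k \in \mathcal{L}(\mathbf{v})}\delta_k = s(\mathbf{u}, \mathbf{v}, i)$. Indeed for large $n$, $v_{i,n}$ exceeds every finite $v_k$ and every $u_k$, so $\mathcal{K}_1(v_{i,n}) = \emptyset$, $\mathcal{K}_2(v_{i,n}) = \mathcal{L}(\mathbf{v}(n))$, and the $\mathcal{K}_2$-summand is $\sum_{k \in \mathcal{L}(\mathbf{v}(n))}\delta_k(1 - (v_{k,n}-u_k)/v_{i,n})$; for $k \in \mathcal{L}(\mathbf{v})$ this $\to \delta_k$, and for $k \notin \mathcal{L}(\mathbf{v})$ one checks $k \notin \mathcal{L}(\mathbf{v}(n))$ eventually (since $v_{k,n} \to \infty$) so it drops out, while $\mathcal{K}_3(v_{i,n})$ consists of those $k \notin \mathcal{L}(\mathbf{v}(n))$ — but wait, those are not in $\mathcal{L}(\mathbf{v}(n))$ by definition of $\mathcal{K}_2$ using $\mathcal{L}(\mathbf{v}(n))$ — so I must be careful: $\mathcal{K}_3$ picks up the infinite-$v$ indices and their contribution is $\delta_k u_k / v_{i,n} \to 0$. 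So the limit is exactly $\sum_{k\in\mathcal{L}(\mathbf{v})}\delta_k$, and we even get equality.

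When $v_i < +\infty$: I set $A_n := v_{i,n} \to v_i =: A < \infty$ and analyse each $k$. For $k$ with $u_k > v_i$: eventually $u_k > A_n$ so $k \in \mathcal{K}_1(A_n)$, matching $k \in \mathcal{K}_1(A)$, contribution $\to \delta_k$. For $k$ with $u_k < v_i$ and $v_k < v_i$ (so $k \in \mathcal{K}_2(A)$, contributing $\delta_k(1 - (v_k - u_k)/v_i)$): eventually $v_{k,n} < A_n$ so $k \in \mathcal{K}_2(A_n)$ and the summand converges to that value. For $k$ with $u_k < v_i < v_k$ (so $k \in \mathcal{K}_3(A)$, contributing $\delta_k u_k / v_i$): eventually $u_k < A_n$ and $v_{k,n} > A_n$ (or $v_k = \infty$), so $k \in \mathcal{K}_3(A_n)$, summand $\to \delta_k u_k / v_i$. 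The genuinely delicate indices are those lying on a boundary in the limit: $u_k = v_i$, or $v_k = v_i$, or $v_k = v_i$ with $u_k = v_i$. Here the discrete membership in $\mathcal{K}_1/\mathcal{K}_2/\mathcal{K}_3$ can flip with $n$, so one only gets $\liminf$. The strategy is: use Remark~\ref{rem:7} to pass freely between the formulations $\mathcal{K}_j$, $\widetilde{\mathcal{K}}_j$, $\widehat{\mathcal{K}}_j$, choosing for each boundary index the bookkeeping convention under which its summand, \emph{in whichever of the two bins it can land}, takes the smaller limiting value — and check that value equals its contribution in the limit formula $s(\mathbf{u},\mathbf{v},i)$. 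Concretely: if $v_k = v_i$, comparing the $\mathcal{K}_2$-value $\delta_k(1 - (v_{k,n}-u_k)/A_n) \to \delta_k(1-(v_i-u_k)/v_i) = \delta_k u_k / v_i$ with the $\mathcal{K}_3$-value $\delta_k u_k/A_n \to \delta_k u_k/v_i$: they agree, so no loss. If $u_k = v_i$: the $\mathcal{K}_1$-value is $\delta_k$, the $\mathcal{K}_3$-value is $\delta_k u_k / A_n \to \delta_k u_k / v_i = \delta_k$: again they agree. So in every boundary case the two possible limiting contributions coincide, which yields not just $\liminf \ge$ but actually convergence; the $\liminf$ phrasing in the statement is simply the weakest thing needed downstream. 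I will assemble these finitely many termwise limits to conclude.

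Finally, \eqref{x2}: $\lim_n \min_{1\le i\le p} s(\mathbf{u},\mathbf{v}(n),i) = \min_{1\le i \le p} s(\mathbf{u},\mathbf{v},i)$. Since $\min$ of finitely many convergent sequences is convergent to the min of the limits, and we have just shown each $s(\mathbf{u},\mathbf{v}(n),i)$ converges to $s(\mathbf{u},\mathbf{v},i)$ (the $\liminf$ in \eqref{x1} is in fact a genuine limit by the boundary-case analysis above, and when $v_i = +\infty$ it is a limit too), \eqref{x2} follows immediately. I should remark that one does not even need full termwise convergence for \eqref{x2}: the $\liminf \ge$ direction of \eqref{x1} gives $\liminf_n \min_i s(\mathbf{u},\mathbf{v}(n),i) \ge \min_i s(\mathbf{u},\mathbf{v},i)$ once one also observes, via the $\widetilde{\mathcal{K}}/\widehat{\mathcal{K}}$ trick, that along any subsequence realising the $\limsup$ the bins stabilise, giving the matching upper bound. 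I expect the write-up of the $v_i < \infty$ boundary-index case to be the longest part; everything else is routine limit-taking on fixed finite sums.
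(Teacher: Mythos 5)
There is a genuine gap, and it sits exactly where the unbounded case makes this proposition nontrivial: the indices $i$ with $v_i=+\infty$. You claim that for such $i$ one has full convergence $s(\mathbf{u},\mathbf{v}(n),i)\to\sum_{k\in\mathcal{L}(\mathbf{v})}\delta_k$, and your justification is that any $k\notin\mathcal{L}(\mathbf{v})$ eventually satisfies $k\notin\mathcal{L}(\mathbf{v}(n))$ "since $v_{k,n}\to\infty$", so such terms drop out of the $\mathcal{K}_2$-sum and land in $\mathcal{K}_3$ with contribution $\delta_k u_k/v_{i,n}\to 0$. This is false: every $\mathbf{v}(n)$ lies in $(\mathbb{R}^+)^p$, so $\mathcal{L}(\mathbf{v}(n))=\{1,\dots,p\}$ for \emph{all} $n$, and an index $k$ with $v_k=+\infty$ but $v_{k,n}\le v_{i,n}$ belongs to $\mathcal{K}_2(v_{i,n})$ and contributes $\delta_k\bigl(1-(v_{k,n}-u_k)/v_{i,n}\bigr)$, which need not vanish. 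Concretely, take $p=2$, $\delta_1=\delta_2=1$, $u_1=u_2=1$, $v_{1,n}=n$, $v_{2,n}=2n$; then $\mathcal{L}(\mathbf{v})=\emptyset$, $s(\mathbf{u},\mathbf{v},2)=0$, but $s(\mathbf{u},\mathbf{v}(n),2)=\bigl(1-\tfrac{n-1}{2n}\bigr)+\tfrac{1}{2n}\to\tfrac12$. So for infinite indices only the inequality $\liminf_n s(\mathbf{u},\mathbf{v}(n),i)\ge s(\mathbf{u},\mathbf{v},i)$ holds (and it can be strict), which is what the paper proves in its Step 2; your stronger convergence claim is wrong.

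This breaks your proof of \eqref{x2}, which rests entirely on "min of finitely many convergent sequences converges to the min of the limits". The missing ingredient is the matching upper bound $\limsup_n\min_i s(\mathbf{u},\mathbf{v}(n),i)\le \min_i s(\mathbf{u},\mathbf{v},i)$ in the case where the right-hand minimum equals $\sum_{k\in\mathcal{L}(\mathbf{v})}\delta_k$ and is attained only at infinite indices; your parenthetical fix ("along a subsequence the bins stabilise") does not supply it, because even with stabilised membership the $\mathcal{K}_2$-contribution of an infinite-type $k$ depends on the ratio $v_{k,n}/v_{i,n}$ and can stay bounded away from $0$ and $\delta_k$, as the example shows. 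The paper closes this by a specific choice: for each $n$ take $i^*$ with $v_{i^*,n}$ minimal among $\{v_{i,n}:i\in\mathcal{L}_\infty(\mathbf{v})\}$; then (for large $n$, using the Remark~\ref{rem:7} conventions) $\mathcal{K}_1(v_{i^*,n})=\emptyset$ and $\mathcal{K}_2(v_{i^*,n})=\mathcal{L}(\mathbf{v})$, whence $s(\mathbf{u},\mathbf{v}(n),i^*)\le\sum_{k\in\mathcal{L}(\mathbf{v})}\delta_k+o(1)$, which gives the needed upper bound on the minimum. Your treatment of the finite-$v_i$ case and of $\overline{s}$ (fixed threshold $A=u_i$, boundary indices handled via the $\widetilde{\mathcal{K}}/\widehat{\mathcal{K}}$ flexibility) is essentially sound and parallels the paper's Step 1, but without the $i^*$ argument the unbounded half of the proposition, and hence \eqref{x2}, is not proved.
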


\medskip

We now prove the above propositions.

\subsection{Proof of Propositions~\ref{Prop:4} \& \ref{ll1}} \label{Sec:App}

\begin{proof}[Proof of Proposition \ref{Prop:4}] When the vector $\mathbf{v}\in (\mathbb{R}^+\cup\{+\infty\})^p$ is finite, the statement corresponds to  \cite[Proposition 3.1]{WW2021}. Thus, in what follows we assume that we are in the infinite case.

Fix $\epsilon>0$. We choose $v^*=v^*(\epsilon)>0$ large enough so that
\begin{equation}\label{largeenough2}
\left| \sum_{k\in \mathcal{L}(\mathbf{v})}\delta_k\left(-\frac{v_k-u_k}{v^*}\right)+\sum_{k\in \mathcal{L}_\infty(\mathbf{v})}\frac{\delta_ku_k}{v^*}\right|<\epsilon   \,
\end{equation}
and
	\begin{equation}\label{largeenough}
v^*>\max\left\{\max_{1\le k\le d}u_k, \max_{k\in\mathcal{L}(\mathbf{v})}v_k\right\} \, .
	\end{equation}
Note that by definition $ \mathbf{u}$ is bounded and so the right hand side in \eqref{largeenough} is clearly  finite.
\noindent In turn, consider  the vector $\mathbf{v}^*=(v_1^*, \dots, v_p^*)$  where
	$$v_i^*:=v_i \  \ \text{for}\ \  i\in \mathcal{L}(\mathbf{v}) \qquad \text{and} \qquad v_i^*:=v^*\  \ \text{for}\ \  i\in \mathcal{L}_{\infty}(\mathbf{v}).$$
Note that  by definition $\mathcal{L}(\mathbf{v}^*)=\{1,2,\cdots, p\}$ and that if $v_k^*<v_i^*$, then  $k\in \mathcal{L}(\bold{v})$.

For each  $1\le i\le p$, we now compare ${s}(\mathbf{u}, \mathbf{v}^*, i)$ with ${s}(\mathbf{u}, \mathbf{v}, i)$  and $\overline{s}(\mathbf{u}, \mathbf{v}^*, i)$ with $\overline{s}(\mathbf{u}, \mathbf{v}, i)$.  With Remark \ref{rem:7} in mind, without loss of generality,  we assume that  these quantities for $\bold{u}$ and  $\mathbf{v}$ are defined with
$$
\K_1(A)=\{1\le k\le p: u_k>A\}, \ \ \
\K_2(A)=\{k\in \mathcal{L}(\bold{v}): v_k<A\} $$
and $
	\mathcal{K}_3(A):=\{1, \dots, p\}\setminus (\mathcal{K}_1(A)\cup \mathcal{K}_2(A))$. Also, in order to make a distinction,  we use $\mathcal{G}$ to take the role of $\mathcal{K}$ in the above when considering the corresponding  quantities for $\bold{u}$ and  $ \mathbf{v}^*$.

\medskip

\begin{itemize}
  \item {\em Comparing ${s}(\mathbf{u}, \mathbf{v}^*, i)$ with ${s}(\mathbf{u}, \mathbf{v}, i)$ for  $1\le i\le p$.} We consider two cases.
\begin{itemize}
  \item[] Case 1: Suppose  $i\in \mathcal{L}(\mathbf{v})$. Then, since $v_i^*=v_i$, it immediately follows that \begin{align*}
  \mathcal{G}_1(v_i^*)&:=\Big\{1\le k\le p: u_k> v_i^*\Big\}=\Big\{1\le k\le p: u_k > v_i\Big\}=\mathcal{K}_1(v_i),\\[2ex]
  \mathcal{G}_2(v_i^*)&:=\Big\{k\in\mathcal{L}(\mathbf{v}^*): v_k^* < v_i^*\Big\}=\Big\{k\in\mathcal{L}(\mathbf{v}): v_k< v_i\Big\}=\mathcal{K}_2(v_i)\end{align*}
  and thus \begin{equation}\label{1}
{s}(\mathbf{u}, \mathbf{v}^*, i)={s}(\mathbf{u}, \mathbf{v}, i) \ \quad \forall \ \ i\in \mathcal{L}(\mathbf{v}).
  \end{equation}

  \item[] Case 2: Suppose  $i\in \mathcal{L}_{\infty}(\mathbf{v})$. Then, $v_i^*=v^*$, and by \eqref{largeenough}, it follows  that \begin{align*}
  \mathcal{G}_1(v_i^*)&:=\Big\{1\le k\le p: u_k > v_i^*\Big\}=\emptyset,\\[2ex]
  \mathcal{G}_2(v_i^*)&:=\Big\{k\in\mathcal{L}(\mathbf{v}^*): v_k^*< v_i^*\Big\}=\mathcal{L}(\mathbf{v}) \end{align*}
  and thus  $$
  {s}(\mathbf{u}, \mathbf{v}^*, i)=\sum_{k\in \mathcal{L}(\mathbf{v})}\delta_k\left(1-\frac{v_k-u_k}{v^*}\right)+\sum_{k\in \mathcal{L}_\infty(\mathbf{v})}\frac{\delta_ku_k}{v^*}   \, .
  $$
Therefore, by (\ref{largeenough2}), we have that
 $$
  \left|  {s}(\mathbf{u}, \mathbf{v}^*, i)-\sum_{k\in \mathcal{L}(\mathbf{v})}\delta_k\right|<\epsilon.
  $$
Since the quantities  ${s}(\mathbf{u}, \mathbf{v}^*, i)$ for $i\in\mathcal{L}_{\infty}(\mathbf{v})$ are equal, we have that
   \begin{equation}\label{2}
  \left|\min_{i\in \mathcal{L}_{\infty}(\mathbf{v})}  {s}(\mathbf{u}, \mathbf{v}^*, i)-\sum_{k\in \mathcal{L}(\mathbf{v})}\delta_k\right|<\epsilon.
  \end{equation}
\end{itemize}

\medskip

\item {\em Comparing $\overline{s}(\mathbf{u}, \mathbf{v}^*, i)$ with $\overline{s}(\mathbf{u}, \mathbf{v}, i)$ for  $1\le i\le p$.} It is evident that
 $$
  {\mathcal{G}}_1(u_i)={\mathcal{K}}_1(u_i)   \quad {\rm and }  \quad {\mathcal{G}}_2(u_i)={\mathcal{K}}_2(u_i)  \qquad  (1 \le i \le p) \, .
  $$ Thus, it immediately follows that
  \begin{equation}\label{3}
  \overline{s}(\mathbf{u}, \mathbf{v}^*, i)=\overline{s}(\mathbf{u}, \mathbf{v}, i)  \ \quad \forall \ \ 1 \le i \le p
  \end{equation}

\end{itemize}

\medskip

 \noindent We continue the proof of the proposition by making use of  the above comparisons. Let $\ddot{s}_0(\mathbf{u}, \mathbf{v})$ denote the right hand side of \eqref{dim-numb}.  Then, by (\ref{1}) and (\ref{3}), we have that
 \begin{align*}
   \begin{split}
 \ddot{s}_0(\mathbf{u}, \mathbf{v})= \min \Big\{\min_{i\in \mathcal{L}(\mathbf{v})}\{{s}(\mathbf{u}, \mathbf{v}^*, i), \ \min_{{1\le i\le p}}\{\overline{s}(\mathbf{u}, \mathbf{v}^*, i)\},\ \sum_{i\in \mathcal{L}(\mathbf{v})}\delta_i\Big\}.  \end{split}
   \end{align*}
   Thus, by \eqref{2}, it follows that
    \begin{align}\label{eq:1}
   \begin{split}
    \ddot{s}_0(\mathbf{u}, \mathbf{v})  \ \geq  &  \ \min\Big\{\min_{i\in \mathcal{L}(\mathbf{v})}\{{s}(\mathbf{u}, \mathbf{v}^*, i)\}, \ \min_{{1\le i\le p}}\{\overline{s}(\mathbf{u}, \mathbf{v}^*, i)\},\ \min_{i\in \mathcal{L}_{\infty}(\mathbf{v})}{s}(\mathbf{u}, \mathbf{v}^*, i)\Big\}-\epsilon \\[2ex]
  =& \ \min\Big\{\min_{1\leq i\leq p}\{{s}(\mathbf{u}, \mathbf{v}^*, i)\}, \ \min_{{1\le i\le p}}\{\overline{s}(\mathbf{u}, \mathbf{v}^*, i)\}\Big\}-\epsilon.
  \end{split}
   \end{align}

  \noindent On the other hand, by \eqref{2} and \eqref{1}, it follows that
  \begin{align}\label{eq:2}
  \begin{split}
 \min_{1\leq i\leq p}\{{s}(\mathbf{u}, \mathbf{v}^*, i)\}\} \
  =& \  \min\big\{\min_{i\in \mathcal{L}(\mathbf{v})}\{{s}(\mathbf{u}, \mathbf{v}^*, i)\}, \  \min_{i\in \mathcal{L}_{\infty}(\mathbf{v})}{s}(\mathbf{u}, \mathbf{v}^*, i)\big\}\\[2ex]
  \ge & \ \min\big\{\min_{i\in \mathcal{L}(\mathbf{v})}\{{s}(\mathbf{u}, \mathbf{v}^*, i)\}, \  \sum_{k\in \mathcal{L}(\mathbf{v})}\delta_k\big\}-\epsilon\\[2ex]
  =& \  \min\big\{\min_{i\in \mathcal{L}(\mathbf{v})}\{{s}(\mathbf{u}, \mathbf{v}, i)\}, \  \sum_{k\in \mathcal{L}(\mathbf{v})}\delta_k\big\}-\epsilon\\[2ex]
  =& \ {s}_0(\mathbf{u},\mathbf{v})-\epsilon.
  \end{split}
 \end{align}
However, in the bounded case we know by  Wang \& Wu  \cite[Proposition 3.1]{WW2021} that
  \begin{align*}
\min\Big\{\min_{1\leq i\leq p}\{{s}(\mathbf{u}, \mathbf{v}^*, i)\}, \ \min_{{1\le i\le p}}\{\overline{s}(\mathbf{u}, \mathbf{v}^*, i)\}\Big\}=\min_{1\leq i\leq p}\{{s}(\mathbf{u}, \mathbf{v}^*, i)\}.
   \end{align*}
   Thus, the upshot of \eqref{eq:1} and \eqref{eq:2} is that
 \[
     \ddot{s}_0(\mathbf{u}, \mathbf{v}) \geq {s}_0(\mathbf{u},\mathbf{v})-2\epsilon.
 \]
 Now  $\epsilon > 0$ is arbitrary and so
  \[
     \ddot{s}_0(\mathbf{u}, \mathbf{v}) \geq {s}_0(\mathbf{u},\mathbf{v}).
 \]
 This together with the fact that the opposite inequality is immediate from the definition of the quantities under consideration,   completes the proof of the proposition.
\end{proof}

\medskip

\begin{proof}[Proof of Proposition \ref{ll1}]
Recall that for $\bold{u}\in (\mathbb{R}^+)^p$ and $\bold{v}\in \prod_{i=1}^p[u_i, +\infty]$, we set $$ \mathcal{L}(\bold{v}):=\{1\le k\le p: v_k<\infty\} \quad {\text{and}} \quad
s(\bold{u}, \bold{v}, i):=\sum_{k\in \mathcal{L}(\bold{v})}\delta_k  \  \ {\text{for}}\ \  i\not\in \mathcal{L}(\bold{v}).
$$
\noindent \emph{Step 1.}  Let $i\in \mathcal{L}(\bold{v})$. Then, if $v_k< v_i$ we have $k\in \mathcal{L}(\bold{v})$. Note also that for any $n\in\mathbb{N}$, $\mathcal{L}(\bold{v}(n))=\{1,2,\cdots, p\}$.  With Remark \ref{rem:7} in mind, without loss of generality,  we take $$
\ \ \K_1(v_i)=\{1\le k\le p: u_k>v_i\},\ \ \ \ \ \ \ \K_2(v_i)=\{k\in\mathcal{L}(\bold{v}): v_k< v_i\}
$$
and  $$
 \ \ \ \ \ \   \K_1(v_{i,n})=\{1\le k\le p: u_k> v_{i,n}\},\ \ \  \K_2(v_{i,n})=\{k\in\mathcal{L}(\bold{v}): v_{k,n}< v_{i,n}\}.
  $$
  Note that if $u_k>v_i$, then $u_k>v_{i,n}$ for all $n$ large and  if $v_k<v_i$, then $v_{k,n}<v_{i,n}$ for all $n$ large. Thus, it follows that for $n$ large enough $$
  \K_1(v_i)\subset \K_1(v_{i,n}), \quad \text{and} \quad \K_2(v_i)\subset \K_2(v_{i,n}).
  $$
Next, for $n$ large enough define
\[
\mathcal{P}_1= \mathcal{P}_1(i,n):=\K_1(v_{i,n})\setminus\K_1(v_i) \quad \text{and} \quad \mathcal{P}_2= \mathcal{P}_2(i,n):=\K_2(v_{i,n})\setminus\K_2(v_i).
\]
%
Then, $ s(\bold{u}, \bold{v}(n), i)$  can be rewritten as
\begin{align}
\label{poll}
  s(\bold{u}, \bold{v}(n), i)
  = & \sum_{k\in \K_1(v_i)}\delta_k+\sum_{k\in \K_2(v_i)}\delta_k+\sum_{k\in \mathcal{P}_1\cup \mathcal{P}_2}\delta_k
  +\frac{1}{v_{i,n}}\bigg[\sum_{k\in \K_3(v_{i})}u_k\delta_k-\sum_{k\in \mathcal{P}_1\cup \mathcal{P}_2}u_k\delta_k \nonumber \\[2ex]
   & \hspace*{10ex} - \sum_{k\in \K_2(v_i)}(v_{k,n}-u_k)\delta_k-\sum_{k\in \mathcal{P}_2}(v_{k,n}-u_k)\delta_k\bigg] \nonumber \\[2ex]
  = &  \ s(\bold{u}, \bold{v}, i)  \ + \ \sum_{k\in \mathcal{P}_1}\delta_k+\sum_{k\in \mathcal{P}_2}\delta_k-\frac{1}{v_{i,n}}\bigg[\sum_{k\in \mathcal{P}_1}u_{k}\delta_k \, + \, \sum_{k\in \mathcal{P}_2}v_{k,n}\delta_k\bigg].
\end{align}
Also note that by definition
\begin{itemize}
  \item for $k\in \mathcal{P}_1$,
  $$
  u_k>v_{i,n}  \quad  {\rm and }   \quad u_k\le v_i,\ {\text{hence}}\ \ \frac{u_k}{v_{i,n}}=1+o(1)  \quad  {\rm as} \  n \to \infty \ ;
  $$
  \item for $k\in \mathcal{P}_2$, $$
  v_{k,n}< v_{i,n} \quad  {\rm and }   \quad v_k\ge v_i,\ {\text{hence}}\ \ \frac{v_{k,n}}{v_{i,n}}=1+o(1) \quad  {\rm as} \  n \to \infty \ .
  $$
\end{itemize}
The upshot of this together with \eqref{poll} is that as $ n \to \infty$,  we have that  \begin{equation}\label{6}
s(\bold{u}, \bold{v}(n), i)=s(\bold{u}, \bold{v}, i)+o(1)  \qquad \forall \ \ i\in \mathcal{L}(\bf{v}).
\end{equation}

\medskip

\noindent \emph{Step 2.}  Let $i\in \mathcal{L}_{\infty}(\mathbf{v})$. 
  Note that for $n$ sufficiently large  $$
\K_1(v_i)=\{1\le k\le p: u_k>v_i\}=\emptyset,\ \ \ \ \K_2(v_i)=\{k\in \mathcal{L}(\bold{v}): v_k< v_i\}=\mathcal{L}(v),
$$
and
  $$
  \K_1(v_{i,n})=\{1\le k\le p: u_k> v_{i,n}\}=\emptyset,\ \ \K_2(v_{i,n})=\{1\le k\le p: v_{k,n}< v_{i,n}\}\supset \mathcal{L}(\mathbf{v}).
  $$ Then \begin{align} \label{dragon}
    s(\bold{u}, \bold{v}(n), i)&=\sum_{k\in \K_2(v_{i,n})}\delta_k-\sum_{k\in \K_2(v_{i,n})}\frac{\delta_k(v_{k,n}-u_k)}{v_{i,n}}+\sum_{k\in \K_3(v_{i,n})}\frac{\delta_k u_k}{v_{i,n}}.\end{align}
    It is clear that the last term is $o(1)$ since $v_{i,n}\to \infty$ as $n \to \infty$. Moreover,
    \begin{align*}
    s(\bold{u}, \bold{v}(n), i)&=\sum_{k\in \mathcal{L}(\mathbf{v})}\delta_k-\sum_{k\in \mathcal{L}(\mathbf{v})}\frac{\delta_k(v_{k,n}-u_k)}{v_{i,n}} \  \ +  \! \sum_{k\in \K_2(v_{i,n})\setminus \mathcal{L}(\mathbf{v})} \! \delta_k\left(1-\frac{v_{k,n}-u_k}{v_{i,n}}\right)+o(1).
    \end{align*}
  Note that in the above formula, the second summation is $o(1)$ since the numerator is finite and the denominator tends to infinity as $n$ increases; and the third term is positive since for $k\in \K_2(v_{i,n})\setminus \mathcal{L}(\mathbf{v})$, we have that  $v_{k,n}-u_k< v_{k,n}\leq  v_{i,n}$. Thus, we conclude that
     \begin{align}\label{7}
     s(\bold{u}, \bold{v}(n), i)
    & \ \ge \  \sum_{k\in \mathcal{L}(\mathbf{v})}\delta_k-o(1)  \ \qquad \forall   \ \ i\in \mathcal{L}_{\infty}(\mathbf{v}).
  \end{align}
   This establishes  the first assertion in (\ref{x1}). The second assertion can be proved  with the same arguments and is in fact easier since when dealing with $ \overline{s}(\bold{u}, \bold{v},i)$ each component $u_i \ (1 \le i \le d)$ of the vector $ \bold{u} $ is bounded.  We omit the proof.

\medskip

\noindent \emph{Step 3.}  Regarding the limit statement (\ref{x2}), in view of  (\ref{6}) and (\ref{7}), it suffices to show that
\begin{equation}
\label{dragsv}
  \min_{i\in \mathcal{L}_{\infty}(\mathbf{v})}s(\bold{u}, \bold{v}(n), i)\le \sum_{k\in \mathcal{L}(\mathbf{v})}\delta_k+o(1)  \, .
  \end{equation}
  For each $n\ge 1$, let $v_{i^*,n}$ be the smallest element among $\{v_{i,n}: i\in \mathcal{L}_{\infty}(\mathbf{v})\}.
  $ Then, it follows that for $n$ large enough  $$\K_1(v_{i^*,n})=\emptyset, \ \ \K_2(v_{i^*,n})=\mathcal{L}(\mathbf{v}).$$ In turn, this together with \eqref{dragon} implies that  $$
  s(\bold{u}, \bold{v}(n), i^*)=\sum_{k\in \mathcal{L}(\mathbf{v})}\delta_k-\sum_{k\in \mathcal{L}(\mathbf{v})}\frac{\delta_k(v_{k,n}-u_k)}{v_{i^*,n}}+\sum_{k\in \mathcal{L}_{\infty}(\mathbf{v})}\frac{\delta_k u_k}{v_{i^*,n}} .
  $$
  This together with the fact that
  $$
  \min_{i\in \mathcal{L}_{\infty}(\mathbf{v})}s(\bold{u}, \bold{v}(n), i)\le s(\bold{u}, \bold{v}(n), i^*)
  $$
  and that $v_{i^*,n}\to \infty$ as $n \to \infty$ implies the desired inequality \eqref{dragsv}.
\end{proof}

\section{Proof of Theorem~\ref{pointcase}}

The overall strategy for establishing  Theorem \ref{pointcase} is simple enough. For any $\epsilon>0$,  we will use the classical method of constructing a Cantor type subset $\mathbb{E}_\infty(\epsilon)$ of  the set $ \limsup_{n\to\infty} {\prod_{i=1}^p}B_{i,n}
$ under consideration  and a measure $\nu_\epsilon$ supported on this subset. The measure will satisfy the  H\"older exponent  estimate:
\begin{equation} \label{ub_general_ball}
\nu_\epsilon(B(x,r))   \; \le \;    c \,   r^{s_0(\bold{u},\bold{v})-2\varepsilon}   \, \quad {\rm for \ all \ } r < r_0  \, , \ x \in \mathbb{E}_{\infty}(\epsilon) \, ,
\end{equation}
where $c, r_0  $  are  positive constants and
$s_0(\bold{u},\bold{v})$ is the dimension number defined by~\eqref{dimensionalnumber}.
This implies via the standard Mass Distribution Principle (see \cite[\S4.2]{F}) that
\begin{equation}  \label{desiredCantordim}
\dim_{\rm H}  \mathbb{E}_\infty(\epsilon)
   \; \ge \;   s_0(\mathbf{u},\mathbf{v})-2\varepsilon.
\end{equation}
In turn, this together with the fact that
\[
\mathbb{E}_\infty(\epsilon) \;   \subset \; \limsup_{n\to\infty} {\prod_{i=1}^p}B_{i,n}   \, ,
\]
implies that
\[
\dim_{\rm H}\left(\limsup_{n\to\infty} {\prod_{i=1}^p}B_{i,n} \right)\ge  s_0(\mathbf{u},\mathbf{v})-2\varepsilon.
\]
However, $\ep >0$ can be made arbitrarily small, whence~\eqref{pointcase_result} follows and thereby completes the proof of Theorem~\ref{pointcase}.

\medskip

We start the process of carrying out the above strategy with some preliminaries that will help simplify the construction of the desired Cantor type subset $\mathbb{E}_\infty(\epsilon)$.   Throughout, the Vinogradov symbols $\ll$ and
$\gg$ will be used to indicate an inequality with an unspecified
positive multiplicative constant.  If $a \ll b $ and $ a \gg b $, we
write $a \asymp b $ and say that the two quantities $a$ and $b$ are
comparable.   Also, for any $
\kappa>0$, given a ball $B=B(x,r)$  or a rectangle $R:=\prod_{i=1}^p B(x_i, r_i)$ we denote by $\kappa B$  (respectively,  $\kappa R$)  the  ball $B$  (respectively, the rectangle $R$)  scaled by a factor $\lambda$;  that is
\begin{equation} \label{def_standard_dilatation}
  \kappa B:=B(x, \kappa r)  \quad  {\rm and }  \quad \kappa R:=\prod_{i=1}^p B(x_i, \kappa r_i)  \, .
 \end{equation}
In turn, we say that a family $\Gamma$ of balls (respectively, rectangles) is $\kappa r$-separated if for any two balls $B_1, B_2$ (respectively, two rectangles $R_1, R_2$), we have $\kappa B_1\cap \kappa B_2=\emptyset$ (respectively, $ \kappa R_1\cap  \kappa R_2=\emptyset$).

\subsection{Preliminaries for the proof}

\subsubsection{Imposing additional assumptions and structure. }\label{subsect:KGB}

In the course of establishing the theorem, without loss of generality, we can invoke Remark~\ref{WwwwwWsv} and assume that
\begin{equation} \label{unbounded_case}
\max_{i=1,\dots,p}v_i=+\infty.
\end{equation}
Also, in view of Remark~\ref{111_to_222S} we are able to replace condition ~\eqref{222S} imposed on $\mathbf{u}$ in the statement of the theorem by the weaker assumption~\eqref{111}.

Next, observe that~\eqref{222S}  together with condition~\eqref{cond:r-n} imposed in the statement of the theorem implies that there exists an integer $N>0$ such that, for all $n\in\N$ with $n\geq N$,  we have
\begin{equation} \label{v_in_bigger_than_u_i}
v_{i,n}>u_i  \quad  \ {\text{for each}}\ \ 1\le i\le p  \, .
\end{equation}
However, it is easily seen that the $\limsup$ set under consideration in Theorem~\ref{pointcase} is
 unaffected if we drop a finite number of indices $n$,. Thus, without loss of generality,   when establishing Theorem~\ref{pointcase},  we can assume  that \eqref{v_in_bigger_than_u_i} is valid for all $n\in\N$.   In the same spirit, given any $ \varepsilon > 0 $ it follows via  \eqref{cond:r-n},  \eqref{222S} and   Proposition~\ref{ll1} that for all $n$ sufficiently large
\begin{equation} \label{s0_is_continuous}
\left|s_0(\mathbf{u},\mathbf{v})-s_0(\mathbf{u},\mathbf{v}(n))\right|<\varepsilon   \,
\end{equation}
where $\mathbf{v}(n):=(v_{1,n},\dots,v_{p,n})$.   However, within the context of establishing Theorem~\ref{pointcase}  we can assume that \eqref{s0_is_continuous} is valid for all $n\in\N$.

Finally,  in establishing Theorem~\ref{pointcase} it will be convenient to  assume that the 
sequence of ``radii''   $\{ r_n  :  n \in \N  \} $  appearing  in the statement  are integer powers of 2. To this end, we slightly enlarge the original $r_n$ from the statement by taking
\[
\tilde{r}_n:=2^{\lfloor\log_2 r_n\rfloor+1}.
\]
Then, the full measure assumption \eqref{fullmeasure1} in Theorem~\ref{pointcase} is also satisfied by $\tilde{r}_n$.	
Replace $v_{i,n}$ by the values
\[
\tilde{v}_{i,n}:= v_{i,n} \cdot {\log r_n \over \log \tilde{r}_n},
\]
so that we have
\[
r_n^{v_{i,n}}=\tilde{r}_n^{\tilde{v}_{i,n}}.
\]
Then, as $r_n\to 0$ when $n\to\infty$, we have
\[
\lim_n\tilde{v}_{i,n}=\lim_n v_{i,n}=v_i.
\]
So, the modifications of $\{r_n\}$ and $\{v_{i,n}\}$ do not affect neither the hypothesis nor the conclusion of Theorem~\ref{pointcase}.

\medskip


\subsubsection{The $K_{G,B}$ covering lemma for rectangles. }\label{subsect:additions}

To obtain our unbounded Mass Transference Principle, we need an analog of the $K_{G,B}$ lemma that was very much at the heart of the proof of the original Mass Transference Principle \cite{BV2006}.   We begin by establishing a  covering result  that extends the standard $5r$-covering lemma for balls to  rectangles. It can be strengthened in various forms but the following is more than adequate for our purpose.

\begin{lemma}[Covering lemma for rectangles] \label{Vitaly_cover_rectangles}
For each $1 \le i \le p$, let
 $X_i$ be a locally compact subset of $\R^{d_i}$ and  let $\{B(x_{i,n},r_n)\}_{n \in \N}$ be a sequence of balls in $X_i$.
Let $(u_1,\dots,u_p)\in(\R^+)^p$ and let
\begin{equation} \label{Vitaly_cover_rectangles_F}
\mathcal{H}:=\left\{\prod_{i=1}^pB(x_{i,n},r_n^{u_i}) : n\in\mathcal{I} \right\}
\end{equation}
 be a collection of rectangles indexed by the set $\mathcal{I}$.  Assume that $\{r_n  :  n\in\mathcal{I}\}$ is uniformly bounded.
Then, there exists a subfamily $\mathcal{H}_1\subset\mathcal{H}$ such that $\mathcal{H}_1$ is at most countable, consists of disjoint rectangles, and
\begin{equation} \label{Vitaly_cover_rectangles_result}
\bigcup_{R\in\mathcal{H}}R\subset \bigcup_{R\in \mathcal{H}_1}5^{\frac{\max_{i=1,\dots, p} u_i}{\min_{i=1,\dots, p} u_i}}R.
\end{equation}
\end{lemma}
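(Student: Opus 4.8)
The plan is to adapt the classical Vitali-type ($5r$-covering) argument to the present anisotropic setting. The only genuinely new feature is that each rectangle $R_n := \prod_{i=1}^p B(x_{i,n},r_n^{u_i})$ carries $p$ side-lengths, all controlled by the single scale $r_n$ but through different exponents $u_i$. Writing $u_{\min}:=\min_i u_i$ and $u_{\max}:=\max_i u_i$, the crucial remark is that the single quantity $r_n^{u_{\min}}$ controls all side-lengths simultaneously up to the factor $2^{u_{\max}/u_{\min}}$: if $r_n^{u_{\min}}<2\,r_m^{u_{\min}}$, then $r_n<2^{1/u_{\min}}r_m$, hence $r_n^{u_i}<2^{u_i/u_{\min}}r_m^{u_i}\le 2^{u_{\max}/u_{\min}}r_m^{u_i}$ for every $i$. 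Thus dyadic generations defined by the value of $r_n^{u_{\min}}$ will play the role that dyadic generations of radii play in the usual proof.

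First I would set $C:=\sup_{n\in\mathcal{I}}r_n^{u_{\min}}$, which is finite since $\{r_n:n\in\mathcal{I}\}$ is bounded and $u_{\min}>0$, and for $j\ge 1$ partition $\mathcal{H}$ into generations $\mathcal{H}^{(j)}:=\{R_n\in\mathcal{H}:2^{-j}C<r_n^{u_{\min}}\le 2^{-j+1}C\}$, so that $\mathcal{H}=\bigcup_{j\ge1}\mathcal{H}^{(j)}$. Then I would build $\mathcal{H}_1$ greedily, generation by generation: take $\mathcal{G}_1$ to be a maximal pairwise-disjoint subfamily of $\mathcal{H}^{(1)}$, and, inductively, $\mathcal{G}_j$ a maximal pairwise-disjoint subfamily of those $R\in\mathcal{H}^{(j)}$ that meet no member of $\mathcal{G}_1\cup\cdots\cup\mathcal{G}_{j-1}$, finally setting $\mathcal{H}_1:=\bigcup_{j\ge1}\mathcal{G}_j$. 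Since the balls are indexed by $n\in\mathbb{N}$, every family in sight is countable, so these maximal subfamilies exist (build them one element at a time) and $\mathcal{H}_1$ is at most countable; disjointness of $\mathcal{H}_1$ is immediate — within a generation by the choice of $\mathcal{G}_j$, and across generations by the exclusion clause.

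The covering inclusion \eqref{Vitaly_cover_rectangles_result} would then be verified exactly as in the classical case. Given $R_n\in\mathcal{H}$, say $R_n\in\mathcal{H}^{(j)}$, maximality forces $R_n$ to meet some $R_m\in\mathcal{G}_1\cup\cdots\cup\mathcal{G}_j$ (either $R_n$ meets an already-chosen rectangle of an earlier generation, or it survives into the candidate set for $\mathcal{G}_j$ and so, by maximality of $\mathcal{G}_j$, meets some member of $\mathcal{G}_j$). If $R_m\in\mathcal{H}^{(k)}$ with $k\le j$, then $r_m^{u_{\min}}>2^{-k}C\ge 2^{-j}C\ge\tfrac12 r_n^{u_{\min}}$, so $r_n^{u_{\min}}<2\,r_m^{u_{\min}}$ and the remark above applies. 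Projecting $R_n\cap R_m\ne\emptyset$ onto the $i$-th coordinate block gives $|x_{i,n}-x_{i,m}|\le r_n^{u_i}+r_m^{u_i}$, whence
\[
|x_{i,n}-x_{i,m}|+r_n^{u_i}\ \le\ 2r_n^{u_i}+r_m^{u_i}\ <\ \bigl(2^{\frac{u_{\max}}{u_{\min}}+1}+1\bigr)r_m^{u_i}\ \le\ 5^{\frac{u_{\max}}{u_{\min}}}\,r_m^{u_i},
\]
using the elementary inequality $2^{t+1}+1\le 5^t$, valid for all $t\ge 1$ (equality at $t=1$). Hence $B(x_{i,n},r_n^{u_i})\subseteq B(x_{i,m},5^{u_{\max}/u_{\min}}r_m^{u_i})$ for each $i$, i.e.\ $R_n\subseteq 5^{u_{\max}/u_{\min}}R_m$, which is what \eqref{Vitaly_cover_rectangles_result} asserts.

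I expect the only delicate point to be the bookkeeping that produces the precise dilation constant $5^{u_{\max}/u_{\min}}$: grouping generations by $r_n$ itself (rather than by $r_n^{u_{\min}}$) would inflate each side-length by up to $2^{u_{\max}}$ within a generation and leave a constant of the shape $2^{u_{\max}+1}+1$, which exceeds $5^{u_{\max}/u_{\min}}$ once the $u_i$ are large; grouping by $r_n^{u_{\min}}$ is precisely what balances the ``coarseness-of-a-generation'' dilation $2^{u_{\max}/u_{\min}}$ against the usual ``$5$'' of the covering step. The remaining ingredients — existence of maximal disjoint subfamilies, countability, and the reduction of rectangle inclusion to $p$ coordinatewise ball inclusions — are routine.
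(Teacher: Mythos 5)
Your proof is correct: the dyadic generations in $r_n^{u_{\min}}$, the greedy choice of maximal disjoint subfamilies (legitimate here since $\mathcal{I}\subseteq\N$ is countable), and the final computation $2r_n^{u_i}+r_m^{u_i}<(2^{u_{\max}/u_{\min}+1}+1)r_m^{u_i}\le 5^{u_{\max}/u_{\min}}r_m^{u_i}$ (using $2^{t+1}+1\le 5^t$ for $t\ge 1$) all check out, and every rectangle does meet a selected rectangle of generation at most its own, so the comparability $r_n^{u_{\min}}<2r_m^{u_{\min}}$ is available exactly where you need it. The paper takes a different, shorter route: setting $u:=\min_i u_i$ it equips the product space with the auxiliary metric $\tilde\rho:=\max_{i}\rho_i^{u/u_i}$, observes that each rectangle $\prod_i B(x_{i,n},r_n^{u_i})$ is precisely a $\tilde\rho$-ball of radius $r_n^{u}$, invokes the standard $5r$-covering lemma in $(X,\tilde\rho)$, and then translates the $\tilde\rho$-dilation by $5$ back into the Euclidean dilation $\prod_i B(x_{i,n},5^{u_i/u}r_n^{u_i})\subset 5^{u_{\max}/u_{\min}}R$. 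The two arguments are secretly the same — your generation parameter $r_n^{u_{\min}}$ is exactly the radius in the paper's snowflaked metric, and your greedy selection is the usual proof of the $5r$-lemma run in that metric — but the paper's reduction buys brevity and an off-the-shelf reference, at the cost of checking that $\tilde\rho$ is indeed a metric in which rectangles are balls, whereas your version is self-contained, avoids any metric-space bookkeeping, and makes the origin of the constant $5^{u_{\max}/u_{\min}}$ completely explicit.
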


\begin{proof}
To make the notation easier, let  $u:=\min_{i=1,\dots,p}u_i$.   For $i=1,\dots,p$, let $\rho_i$ be Euclidean metrics on $X_i$ inherited from $\R^{d_i}$. Consider the following metric on  the product space $X$:
\begin{equation} \label{def_tilde_rho}
\tilde{\rho}:=\max_{i=1,\dots,p}\rho_i^{\frac{u}{u_i}}   \qquad {\rm where }  \quad   u:=\min_{i=1,\dots,p}u_i    .
\end{equation}
It can be verified  by straightforward computations that any rectangle
\begin{equation*} 
R=\prod_{i=1}^pB(x_{i,n},r_n^{u_i})
\end{equation*}
is a ball in the metric space $(X,\tilde{\rho})$ with centre $(x_{1,n},\dots,x_{p,n})$ and of radius $r_n^{u}$. So, we can apply  the $5r$-covering lemma~\cite[Theorem~2.1]{MAT} to the family $\mathcal{H}$ of balls  in the metric space $(X,\tilde{\rho})$. This gives rise to a subfamily of balls $\mathcal{H}_1\subset \mathcal{H}$ such that
\[
\bigcup_{R\in\mathcal{H}}R\subset \bigcup_{R\in \mathcal{H}_1}5\tilde{\cdot}R,
\]
where we denote by $5\tilde{\cdot}R$ the dilatation by factor $5$ of the ball $R$ in the metric space $(X,\tilde{\rho})$, that is the ball with the
same center as $R$ and the radius 5 times bigger than the radius of $R$.

Recall that any $R=\prod_{i=1}^pB(x_{i,n},r_n^{u_i})\in\mathcal{H}_1$ is a ball in $(X,\tilde{\rho})$ of the radius $r_n^u$. So $5\tilde{\cdot}R$ has radius $5r_n^u$. Thus, returning to the notation of the initial product space we get that $5\tilde{\cdot}R$ is nothing but
\[
\prod_{i=1}^pB(x_{i,n},5^{\frac{u_i}{u}}r_n^{u_i}).
\]
The latter set is clearly contained in $5^{\frac{\max_{i=1,\dots,p}u_i}{\min_{i=1,\dots,p}u_i}}R$ (where we now use the notation for dilatation $\kappa R$ defined in~\eqref{def_standard_dilatation}), and this completes the proof.
\end{proof}


 We are ready to establish the $K_{G,B}$ lemma for rectangles. Here and throughout,  $$ \mu := \mu_1 \times \cdots \times \mu_p$$ is the natural  product measure  in Theorem~\ref{pointcase}.

  \begin{lemma}[$K_{G,B}$ lemma for rectangles]\label{k2}
Let $\{B_{i,n}\}_{n \in \N}$ be a sequence of balls satisfying the assumptions in Theorem~\ref{pointcase}, and let $B$ be a ball in $X=\prod_{i=1}^pX_i$. For any $G\in \N$, there exists a finite
sub-collection $K_{G,B}$ of rectangles in the family
\begin{equation}\label{aa1}
\Gamma:=\Big\{\widetilde{R}=\prod_{i=1}^pB(x_{i,n}, r^{u_i}_n): n\ge G,\  \widetilde{R}\cap \textstyle{\frac12}B\ne \emptyset\Big\}
\end{equation} such that \begin{itemize}
\item the rectangles in $K_{G,B}$ are contained in ${2\over 3}B$;

\item the rectangles in $K_{G,B}$ are $5r$-separated; i.e. for  distinct  $\widetilde{R}$ and $\widetilde{R'}$ in $K_{G,B}$
\begin{equation} \label{KGB_property_5r_separation}
5\widetilde{R}\cap 5\widetilde{R'}=\emptyset;
\end{equation}

\item the rectangles in $K_{G,B}$ almost pack the ball $B$; i.e.
\begin{equation} \label{KGB_property_almost_full_measure}
\mu\left(\bigcup_{\widetilde{R}\in K_{G,B}}\widetilde{R}\right)\asymp \mu(B),
\end{equation}
where the implied constants are absolute.
\end{itemize}

\end{lemma}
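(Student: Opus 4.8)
The plan is to mimic the construction of the $K_{G,B}$ family from the original Mass Transference Principle of Beresnevich--Velani \cite{BV2006}, but using the covering lemma for rectangles (Lemma~\ref{Vitaly_cover_rectangles}) in place of the classical $5r$-covering lemma. First I would recall that, by the full measure hypothesis \eqref{fullmeasure1} (equivalently \eqref{fullmeasure2}) of Theorem~\ref{pointcase}, the $\limsup$ set $\limsup_{n\to\infty}\prod_{i=1}^p B(x_{i,n},r_n^{u_i})$ has full $\mu$-measure in $X$. Hence, for any ball $B$ and any $G\in\N$, the sub-collection of ``blown-up'' rectangles $\{\prod_i B(x_{i,n},r_n^{u_i}): n\geq G\}$ still covers $\mu$-almost all of $B$; in particular it covers a subset of $\tfrac12 B$ of measure $\asymp\mu(B)$ (using that $\mu$ is $\delta$-Ahlfors regular on each factor, so the product measure of $\tfrac12 B$ is comparable to $\mu(B)$). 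By discarding rectangles that meet the boundary region $B\setminus\tfrac23 B$ — more precisely by first shrinking to those rectangles $\widetilde R$ with $\widetilde R\cap\tfrac12 B\neq\emptyset$, which for $n$ large (so $r_n$ small) are automatically contained in $\tfrac23 B$ — one retains a family $\Gamma'$ of rectangles inside $\tfrac23 B$ whose union still has measure $\gg\mu(B)$. This requires a harmless preliminary step: enlarge $G$ if necessary so that all rectangles indexed by $n\geq G$ have diameter smaller than, say, $\tfrac{1}{10}r(B)$, which is possible since $r_n\to 0$ by \eqref{r_n_tends_to_zero} and all exponents $u_i$ are positive constants.

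Next I would apply Lemma~\ref{Vitaly_cover_rectangles} to the family $\Gamma'$ (viewed as balls in the metric $\tilde\rho$ of \eqref{def_tilde_rho}): this yields a countable disjoint subfamily $\mathcal H_1\subset\Gamma'$ with $\bigcup_{R\in\Gamma'}R\subset\bigcup_{R\in\mathcal H_1}\kappa R$ where $\kappa:=5^{(\max_i u_i)/(\min_i u_i)}$. Since dilating by the fixed factor $\kappa$ multiplies the product measure of each rectangle by at most $\kappa^{\sum_i\delta_i}$ (again by Ahlfors regularity of each $\mu_i$), we get
\[
\sum_{R\in\mathcal H_1}\mu(R)\ \geq\ \kappa^{-\sum_i\delta_i}\,\mu\Big(\bigcup_{R\in\Gamma'}R\Big)\ \gg\ \mu(B).
\]
Because the rectangles in $\mathcal H_1$ are pairwise disjoint, $\sum_{R\in\mathcal H_1}\mu(R)=\mu(\bigcup_{R\in\mathcal H_1}R)\leq\mu(\tfrac23 B)\asymp\mu(B)$, so the series converges; truncating to a finite sub-collection retaining all but a small fraction of the total measure gives a finite family. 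To upgrade disjointness to the $5r$-separation \eqref{KGB_property_5r_separation}, observe that a $5r$-separated family in the original coordinates corresponds exactly to a family of $\tilde\rho$-balls that are disjoint after dilation by factor $5$; so I would instead run the $5r$-covering lemma with dilation factor $25$ (or more simply, apply the covering lemma to the already-$5$-times-dilated rectangles), at the cost of replacing $\kappa$ by $5^{2(\max_i u_i)/(\min_i u_i)}$ — the absolute constant changes but the argument is unaffected. This produces the finite, $5r$-separated sub-collection $K_{G,B}$ of rectangles contained in $\tfrac23 B$ with $\mu(\bigcup_{\widetilde R\in K_{G,B}}\widetilde R)\asymp\mu(B)$, as required.

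The main obstacle, though a routine rather than a deep one, is the bookkeeping around constants: one must check that every dilation used (by $5$, by $\tfrac12$, by $\tfrac23$, by $\kappa$) changes the product measure only by an absolute multiplicative factor, which hinges on the product of $\delta_i$-Ahlfors regular measures being ``doubling'' with uniform constants on balls of small enough radius — this is where one invokes \eqref{MTPmeasure} on each factor and the comparison $\mu(B)\asymp\mu(\tfrac12 B)$. A secondary point requiring care is that the rectangles in \eqref{aa1} have the shape $\prod_i B(x_{i,n},r_n^{u_i})$ with a \emph{single} scale parameter $r_n$ per index $n$, so they are genuinely $\tilde\rho$-balls and the covering lemma applies verbatim; no issue arises from the sequences $v_{i,n}$ here, since those only enter later when the rectangles are subdivided in the Cantor construction. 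Finally I would note that the hypothesis that $\{r_n:n\geq G\}$ is uniformly bounded in Lemma~\ref{Vitaly_cover_rectangles} is trivially met since $r_n\to 0$.
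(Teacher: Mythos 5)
Your proposal is correct and follows essentially the same route as the paper's proof: both use the full-measure hypothesis to cover $\tfrac12 B$ in measure by the blown-up rectangles with $n\ge G$ (discarding the finitely many large rectangles so the rest lie in $\tfrac23 B$), apply the rectangle covering lemma to the $5$-times-dilated rectangles to extract a disjoint, hence $5r$-separated, subfamily whose total measure is $\asymp\mu(B)$, and then truncate to a finite subfamily using convergence of $\sum\mu(\widetilde R)$. The only minor imprecision — that $5r$-separation "corresponds exactly" to disjointness of the $\tilde\rho$-dilations by $5$ — is harmless, since the $\tilde\rho$-dilation contains the standard dilation $5\widetilde R$, which is the implication you actually need and the same device the paper uses.
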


\begin{proof}
Since, $r_n^{u_i}\to 0$ as $n\to\infty$   for all $1\leq i \leq p$, we can ensure that every rectangle in $\Gamma$ with $n$ large enough, is contained in ${2\over 3}B$. Hence, without loss of generality, we can assume that all rectangles in $\Gamma$ are contained in ${2\over 3}B$. Furthermore, the conditions of Lemma~\ref{Vitaly_cover_rectangles} are satisfied for the $5$-times scaled  rectangles in $\Gamma$.  So, applying Lemma~\ref{Vitaly_cover_rectangles}, we find a sub-collection $\Gamma_1\subset\Gamma$ such that all rectangles of $\Gamma_1$ are $5r$-separated and for which~\eqref{Vitaly_cover_rectangles_result} holds.

\noindent Recall the measure condition  (\ref{fullmeasure1}) in the statement of  Theorem \ref{pointcase} or  equivalently  \eqref{fullmeasure2}; that is,  for some $(u_1,\dots,u_p)\in(\mathbb{R}^+)^p$, the  $\limsup$ set $$
 \limsup_{n\to \infty}\prod_{i=1}^pB(x_{i,n}, r^{u_i}_n) \, ,
 $$
 is of full $\mu$-measure. Applying \eqref{fullmeasure1}, we have that the ball $\frac12 B$ is covered in measure by $\bigcup_{R\in \Gamma}R$. Hence,
\[
  \mu\left(\textstyle{\frac12} B\right)\le \mu\left(\bigcup_{\widetilde{R}\in \Gamma}\widetilde{R}\right)\le \mu\left(\bigcup_{\widetilde{R}\in \Gamma}5\widetilde{R}\right)\le \sum_{\widetilde{R}\in \Gamma_1}\mu\left(5^{\frac{\max_{i=1,\dots,p}u_i}{\min_{i=1,\dots,p}u_i}} (5\widetilde{R})\right)\asymp \sum_{\widetilde{R}\in \Gamma_1}\mu(\widetilde{R}).
\]
Thus,
\[
  \sum_{\widetilde{R}\in \Gamma_1}\mu(\widetilde{R})\gg  \mu(\textstyle{\frac12} B)\gg \mu(B).
 \]
As all rectangles in $\Gamma_1$ are disjoint and contained in ${2\over 3}B$, we have
\begin{equation} \label{finite_1}
\sum_{\widetilde{R}\in \Gamma_1}\mu(\widetilde{R})\ll \mu(B).
\end{equation}

If the family  $\Gamma_1$ is finite, then we can just take $K_{G,B}=\Gamma_1$ and the proof is complete. If $\Gamma_1$ is countable, proceed as follows.   First, we introduce a numbering of the  rectangles in $\Gamma_1$; namely
\[
\Gamma_1=\left\{\widetilde{R}_i:  i\in\N\right\}.
\]
Then, the upper bound~\eqref{finite_1} implies that the series $\sum_{\widetilde{R}\in \Gamma_1}\mu(\widetilde{R})$ is convergent. Hence, for $N$ sufficiently large we have that
\[
\sum_{i=1}^N \mu(\widetilde{R}_i) \geq \frac12 \sum_{i=1}^{\infty} \mu(\widetilde{R}_i)  \, .
\]
To complete the proof we take  $K_{G,B}=\{\widetilde{R}_i\in\Gamma_1\mid i=1,\dots,N\}$  and observe that
\[
\sum_{i=1}^N \mu(\widetilde{R}_i)\asymp \sum_{i=1}^{\infty} \mu(\widetilde{R}_i) \asymp \mu(B).
\]
\end{proof}

\bigskip

\subsubsection{Notions and sub-collections associated with the $K_{G,B}$ lemma.}\label{subsect:Notations-KGB}

 We now introduce various notions and sub-collections associated with the $K_{G,B}$ lemma that will be used in the proof of Theorem~\ref{pointcase}.  \textit{Throughout,   $\{B_{i,n}\}_{n \in \N}$  is a sequence of balls satisfying the assumptions in Theorem~\ref{pointcase} and  $B$ is an arbitrary  ball in $X=\prod_{i=1}^pX_i$.} With this in  mind, denote by $\mathcal{I}_{G,B}$  the finite collection of indices $n\in \mathbb{N}$ associated with the rectangles of the finite sub-collection $K_{G,B}$ coming via Lemma \ref{k2},  so that
\begin{equation}  \label{bnm}
K_{G,B}=\Big\{\widetilde{R}:=\prod_{i=1}^pB\Big(x_{i, n}, r_n^{u_i}\Big): n\in \mathcal{I}_{G,B}\Big\}.
\end{equation}
We remark that the set $\mathcal{I}_{G,B}$ is not uniquely defined since potentially  we have many choices of the family $K_{G,B}$. However, as we will see,  this will be made concrete when it comes to the construction of the desired Cantor set satisfying \eqref{desiredCantordim}.  Next,  in view of the discussion in the previous section, the radii $r_n$ appearing in \eqref{bnm}  will be assumed to be powers of two and as a consequence  we partition the indexing set $\mathcal{I}_{G,B}$ according to such powers, thus
\[
\mathcal{I}_{G,B}=\bigsqcup_{k\in \N}\mathcal{I}_{G,B,k},
\]
where
\begin{equation}\label{f2}
\mathcal{I}_{G, B, k}:=\Big\{n\in \mathcal{I}_{G,B}: r_n=2^{-k}\Big\}.
\end{equation}
Note that for all but finitely many $k$  the set $\mathcal{I}_{G, B, k}$ is empty. Now, for any $k\in\N$ for which $\mathcal{I}_{G, B, k} \neq \emptyset$  and for every $1\leq i\leq p$, define
\begin{equation}\label{def-v-i}
    v_i{(G, B, k)}:=\max_{n\in \mathcal{I}_{G, B, k}}v_{i,n} \, ,
\end{equation}
and in turn,  let
\[
S_{G, B,k}:=\left\{R=\prod_{i=1}^pB\Big(x_{i, n}, 2^{-k \, v_i{(G,B,k)}}\Big) : \ n\in \mathcal{I}_{G,B, k}\right\}
\]
and
\[
S_{G,B}:=\bigcup_{k\ge 1}S_{G,B,k}.
\]

\begin{remark}\label{rem-R-tilde}
We emphasise  that $K_{G,B}$ is a collection of ``big'' rectangles coming from Lemma~\ref{k2} and $S_{G,B}$   is a collection of ``small'' rectangles obtained by suitably shrinking the rectangles in $K_{G,B}$.    Note that  in view  of \eqref{v_in_bigger_than_u_i} we have  that $u_i <  v_i{(G,B,k)}$ and so the descriptors ``big'' and ``small'' are justified.  There is clearly a one-to-one correspondence between the rectangles in $K_{G,B}$ and  $S_{G,B}$; namely that
 $$
K_{G,B}\ni \widetilde{R}:=\prod_{i=1}^pB\Big(x_{i, n}, 2^{-k \, u_i}\Big)\longleftrightarrow R:=\prod_{i=1}^pB\Big(x_{i, n}, 2^{-k \,v_i{(G,B,k)}}\Big)\in S_{G,B}.
$$
The upshot is that inside  each rectangle $\widetilde{R}\in K_{G,B}$ there is precisely one shrunk rectangle  $R\in S_{G,B}$. 
\end{remark}

\medskip

With \eqref{def-v-i} in mind, for each  $k\in\mathbb{N}$ for which $\mathcal{I}_{G, B, k} \neq \emptyset$,  let 
\begin{equation}  \label{think}
    w(G,B,k):=\max_{1\leq i\leq p}v_i{(G, B, k)}  \, .
\end{equation} 
Then, it follows from the $5r$-covering lemma and basic  volume arguments that for each rectangle $R\in S_{G, B,k}$, we can find a finite collection $\mathcal{C}(R)$ of disjoint balls $B'$ all with the same  radius $$r:= 2^{-k \, w(G,B,k)} $$  such that they  are $5r$-separated and
\[
R\subset\bigsqcup_{B'\in\mathcal{C}(R)}25B'.
\]
By comparing the volume of the rectangle $R$ with the sum of  volumes of (disjoint) balls in $\mathcal{C}(R)$, we find that
    \begin{equation} \label{cardinality_of_CR}
\# \mathcal{C}(R)\asymp \prod_{i=1}^p\left(\frac{2^{-k \, v_i(G,B,k)\delta_i}}{2^{-k \, w(G,B,k)\delta_i}}\right)
\end{equation}
where the implied constants are absolute (most importantly they are  independent of the ball $B$).  
Also, note that in view of the additional assumption~\eqref{unbounded_case}, 
the quantity $w(G,B,k)$ increases as $G$ increases and so  
\begin{equation} \label{w_tends_to_infty}
\lim_{G\to\infty} \ \min_{k: \mathcal{I}_{G, B, k}\ne\emptyset}w(G,B,k)=\infty   \, . 
\end{equation}

\medskip

The following  ``separation'' statement is a simple consequence of the above definitions and construction.

\begin{proposition}\label{pp2}   
The balls in $\mathcal{F}_G(B):=\left\{B' \in \mathcal{C}(R): R\in S_{G, B} \right\} $ are $5r$-separated. In particular, if a given  ball $B_{\lambda} := B(x,\lambda)$ intersects at least two balls in $\mathcal{F}_G(B)$, then its radius $\lambda$ is larger than the radius of any ball in $\mathcal{F}_G(B)$ intersecting $B_{\lambda}$. In turn, the  balls in $\mathcal{F}_G(B)$  that intersect $B_\lambda$ are contained in $2 B_{\lambda}$.
\end{proposition}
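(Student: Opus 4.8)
The plan is to unpack the definitions and assemble the three claims in sequence, the first being the substantive one and the other two being short geometric consequences of $5r$-separation together with the triangle inequality.

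First I would establish the $5r$-separation of $\mathcal{F}_G(B)$. Recall the construction: each ball $B'\in\mathcal{F}_G(B)$ lies in $\mathcal{C}(R)$ for a unique ``small'' rectangle $R\in S_{G,B,k}$ (some $k$), and every ball in $\mathcal{C}(R)$ has the \emph{same} radius $r=2^{-k\,w(G,B,k)}$ and the collection $\mathcal{C}(R)$ is $5r$-separated by construction. So two balls from the \emph{same} $\mathcal{C}(R)$ are automatically $5r$-separated. The issue is two balls $B'\in\mathcal{C}(R)$, $B''\in\mathcal{C}(R')$ coming from \emph{different} small rectangles $R,R'\in S_{G,B}$. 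Here I would first note $R\subset\widetilde R$ and $R'\subset\widetilde R'$ with $\widetilde R,\widetilde R'\in K_{G,B}$ distinct (Remark~\ref{rem-R-tilde}), and the $K_{G,B}$ lemma (Lemma~\ref{k2}, property~\eqref{KGB_property_5r_separation}) gives $5\widetilde R\cap 5\widetilde R'=\emptyset$. Since $B'\subset R\subset\widetilde R$, to get that $5B'$ still lies inside (a dilate of) $\widetilde R$ I need the radius $r=2^{-k\,w(G,B,k)}$ of $B'$ to be small relative to the side-lengths $2^{-k\,u_i}$ of $\widetilde R$; this is exactly where \eqref{v_in_bigger_than_u_i} (i.e.\ $u_i<v_i(G,B,k)\le w(G,B,k)$) enters, forcing $r\le 2^{-k\,u_i}$ for every $i$ (for $k$ large enough, which we may assume by passing to $G$ large, or alternatively by the standing reductions of \S\ref{subsect:KGB}). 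Then $5B'\subset$ the rectangle with the same centre as $\widetilde R$ and side-lengths inflated by a bounded factor; I should be slightly careful about the inflation constant, but since $\widetilde R\subset\frac23 B$ and we have room, one checks $5B'$ and $5B''$ remain within disjoint (mild dilates of) $\widetilde R$ and $\widetilde R'$, hence $5B'\cap 5B''=\emptyset$. In fact the cleanest route, mirroring the proof of Lemma~\ref{Vitaly_cover_rectangles}, is to work in the metric $\tilde\rho$ of \eqref{def_tilde_rho} in which the $\widetilde R$'s are genuine balls, so that the $K_{G,B}$-separation is literal ball separation and there is no constant to track — and then transfer back. The main obstacle is precisely this bookkeeping: making ``$5B'$ stays inside $\widetilde R$'' rigorous given that $B'$ is a Euclidean ball while $\widetilde R$ is a rectangle whose aspect ratios depend on $k$.

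For the second claim, suppose $B_\lambda=B(x,\lambda)$ meets two distinct balls $B',B''\in\mathcal{F}_G(B)$. If $\lambda$ were at most the radius of $B'$ (WLOG the smaller of the two radii, or of any ball it meets), then $B_\lambda\subset 3B'$, and I claim $B_\lambda$ cannot then also meet $B''$: indeed $B'$ and $B''$ are $5r'$-separated where $r'=r(B')$ — more precisely, since they have possibly different radii coming from different $k$, one has $5B'\cap 5B''=\emptyset$, so the distance between their centres exceeds $5\max\{r(B'),r(B'')\}\ge 5r(B')$; but if $B_\lambda$ meets both, the distance between the centres of $B'$ and $B''$ is at most $2\lambda+r(B')+r(B'')\le 2r(B')+r(B')+r(B'')\le 3r(B')+r(B'')$, and combined with $\lambda\le r(B')$ this is strictly less than $5\max\{r(B'),r(B'')\}$ once one also uses $\lambda\le r(B'')$ or re-runs the inequality symmetrically — contradiction. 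Hence $\lambda$ strictly exceeds the radius of every ball in $\mathcal{F}_G(B)$ that it intersects. (I will state it for the radius of any such intersected ball, as in the Proposition.)

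For the third claim, let $B'\in\mathcal{F}_G(B)$ intersect $B_\lambda$. By the second claim $r(B')<\lambda$. Pick $y\in B'\cap B_\lambda$ and let $z$ be any point of $B'$; then $|z-x|\le|z-y|+|y-x|\le 2r(B')+\lambda<3\lambda$ — this shows $B'\subset 3B_\lambda$. To sharpen this to $2B_\lambda$ as stated, I instead estimate from the centre $c'$ of $B'$: $|c'-x|\le r(B')+\lambda$ wait — that is not below $2\lambda$ directly either, so the precise constant $2$ in the statement presumably uses that $B'$ meeting $B_\lambda$ with $r(B')<\lambda$ forces $|c'-x|<\lambda+r(B')<2\lambda$, hence $c'\in 2B_\lambda$; and then since $r(B')<\lambda$, every point of $B'$ is within $|c'-x|+r(B')<2\lambda$ of... this gives $B'\subset 3B_\lambda$ again. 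I would therefore reconcile with the paper's normalisation: if ``$2B_\lambda$'' here means the ball of radius $2\lambda$, the containment $B'\subset 2B_\lambda$ follows once $r(B')\le\lambda/2$ — which can be arranged for $G$ large via \eqref{w_tends_to_infty}, since then the radius $r(B')=2^{-k\,w(G,B,k)}$ of every ball in $\mathcal{F}_G(B)$ is far smaller than any fixed $\lambda$; alternatively the intended reading is simply $B'\subset$ (a bounded dilate of) $B_\lambda$ and the constant is inessential. I would phrase the write-up to match whichever convention \eqref{def_standard_dilatation} fixes, and note that for the application (bounding the $\nu_\epsilon$-measure of an arbitrary ball in the Cantor-set construction) only the bounded-dilate conclusion is needed.
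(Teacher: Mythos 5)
Your handling of the first (substantive) claim follows the paper's own route: balls coming from the same $\mathcal{C}(R)$ are $5r$-separated by construction, while balls coming from distinct rectangles $R\ne R'$ of $S_{G,B}$ inherit separation from the corresponding distinct big rectangles $\widetilde{R}\ne\widetilde{R}'$ of $K_{G,B}$ via \eqref{KGB_property_5r_separation}; the dilation bookkeeping you flag (getting from disjointness of $5\widetilde{R},5\widetilde{R}'$ to disjointness of $5B',5B''$, using that the radius $2^{-k\,w(G,B,k)}$ of the balls does not exceed the semi-sides $2^{-k\,v_i(G,B,k)}$ of $R$, cf.\ \eqref{v_in_bigger_than_u_i}) is exactly the step the paper also leaves implicit, and your suggestion to argue in the metric $\tilde\rho$ of \eqref{def_tilde_rho} is a sensible way to organise it. Your contradiction argument for the second assertion is correct (and the symmetric re-run you worry about is not needed).

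The gap is in the third assertion. You only prove $B'\subset 3B_\lambda$, and neither proposed patch for the constant $2$ works: enlarging $G$ via \eqref{w_tends_to_infty} so that $r(B')\le\lambda/2$ is not legitimate, because the proposition must hold, for the fixed $G$ at hand, for \emph{every} ball $B_\lambda$ — and in the application (\S\ref{ss_measure of a general ball}, see \eqref{r_is_big} and \eqref{r_gg_w}) the radius of the general ball can be comparable to the radii of the balls in $\mathcal{E}_j$, so the balls of $\mathcal{F}_G(B)$ cannot be made small relative to $\lambda$; and simply declaring the constant inessential changes the statement. The missing idea is to use the $5r$-separation quantitatively already in the ``in particular'' step: if $B_\lambda=B(x,\lambda)$ meets two distinct balls $B',B''\in\mathcal{F}_G(B)$ with centres $c',c''$ and radii $\rho',\rho''$, then $5(\rho'+\rho'')\le \dist(c',c'')\le 2\lambda+\rho'+\rho''$, whence $\lambda\ge 2(\rho'+\rho'')$. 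Thus every ball of $\mathcal{F}_G(B)$ meeting $B_\lambda$ has radius $\rho'\le\lambda/2$, and then for any $y\in B'$ one has $\dist(y,x)\le\rho'+(\lambda+\rho')\le 2\lambda$, i.e.\ $B'\subset 2B_\lambda$. Note that the separation factor $5$ is exactly what is needed to produce the constant $2$ (since $\rho''$ may be much smaller than $\rho'$). The paper states these last two deductions as immediate consequences of the first claim; with the argument above inserted, your write-up would be complete.
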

\begin{proof}
The  ``in turn'' part  following directly from the ``in particular'' part and  this follows directly from the first part. Regarding the latter, let $B_1,B_2\in\mathcal{F}_G(B)$. 
Then, $B_1\in\mathcal{C}(R_1)$ and $B_2\in\mathcal{C}(R_2)$ for some $R_1,R_2\in S_{G,B}$.
If $R_1=R_2$, then $B_1$ and $B_2$ are $5r$-separated by the definition of $\mathcal{C}(R)$. In the case  $R_1\ne R_2$, let $\widetilde{R}_1$ and $\widetilde{R}_2$ respectively be the corresponding ``big'' rectangles in $K_{G,B}$. It is clear that $\widetilde{R}_1\ne \widetilde{R}_2$, so they are $5r$-separated by the definition of $K_{G,B}$ which in turn  implies the  $5r$-separation of $B_1$ and $B_2$. This completes the proof of the proposition.
\end{proof}

\subsection{Constructing the Cantor set $\mathbb{E}_\infty(\epsilon)$ and the measure $\nu_\epsilon$ }

We fix an $\varepsilon>0$, which should be understood  to be  an arbitrarily small number given in advance.  Recall,  given  a sequence $\{B_{i,n}\}_{n \in \N}$  of  balls in $X_i$  satisfying the  assumptions of Theorem~\ref{pointcase}, the overall goal is to 
 construct a Cantor type set  
 \[
\mathbb{E}_\infty(\epsilon) \;   \subset \; \limsup_{n\to\infty} {\prod_{i=1}^p}B_{i,n}   \, ,
\]
and a measure $\nu_\epsilon$ supported on  $\mathbb{E}_\infty(\epsilon)$ satisfying  the  H\"older exponent  estimate   \eqref{ub_general_ball}. 
Throughout,   for any Borel measure $\theta$ on $X= {\prod_{i=1}^p}X_i $ and  any measurable subset $F\subset X$ we use the standard notion  $\theta\restriction_F$ to  denote  the restriction of $\theta$ to $F$; that is 
\[
\theta\restriction_F(A)=\theta(A\cap F), \ \text{for any measurable subset} \  A \subset X.
\]
 Also, recall that $ \mu := \mu_1 \times \cdots \times \mu_p $  is the natural  product measure  in Theorem~\ref{pointcase}.   Finally, from this point onwards, since  $\epsilon > 0 $ is fixed,  in order to  simplify  notation we put
 $$
 \nu  = \nu_\epsilon  \qquad {\rm and}  \qquad \mathbb{E}_\infty=\mathbb{E}_\infty(\epsilon)  \, . 
 $$

Having established the  preliminaries  we  are  now in the position to construct $\mathbb{E}_\infty$ with the desired properties.  
The construction is inductive and we start by taking an arbitrary  closed  ball $B_0\subset X$ such that  $\mu(B_0)=1$  and we set
$$
\mathcal{E}_0=\{B_0\}   \, . $$ 
In turn, we let 
$$\nu_0:=\mu\!\restriction_{B_0}  \, .  $$ 
In other words, $\nu_0$ is simply the probability measure with support $B_0$ that has a constant density on $B_0$ with respect to $\mu$.    Now since   $r(B_{i,n})\to  0 $ as $n\to\infty$,  we  have  that
 $r_n\to 0$ as $n\to \infty$  and so there exists an integer $G_0$ such that for all  $n\geq G_0$  \, . 
\begin{equation*} 
\frac{\nu_{0}({B_{0}})}{\mu({B_{0}})} \le r_{n}^{-\varepsilon}.
\end{equation*}
Furthermore, in view of \eqref{w_tends_to_infty} the integer $G_0$ 
can be chosen such that for every $G\geq G_0$ 
\begin{equation*} 
\min_{k: \mathcal{I}_{G, B_0, k}\ne\emptyset}w(G,B_0,k)>1.
\end{equation*}
Let
\[\mathbb{E}_0:=\bigcup_{{B}\in\mathcal{E}_0}{B} = B_0 \, .\]

\noindent To proceed with the induction, assume that we have constructed, for certain $j\in\N$,
\begin{itemize}

    \item the finite disjoint collection of balls $\mathcal{E}_{j-1}$  and thus the $(j-1)$-th level set
\begin{equation*} 
\mathbb{E}_{j-1} :=\bigcup_{{B}\in\mathcal{E}_{j-1}}{B}.
\end{equation*}
\item the probability measure $\nu_{j-1}$ supported on $ \mathbb{E}_{j-1}  $ satisfying 
\begin{equation*}\label{nu-sum-1}
    \sum_{B_{j-1}\in\mathcal{E}_{j-1}}\nu_{j-1}(B_{j-1})=1,
\end{equation*}
\item the sufficiently large integer $G_{j-1}$ so that for all ${B}_{j-1}\in\mathcal{E}_{j-1}$,
\begin{equation*} \label{w_bigger_than_one-B-j-1}
\min_{k: \mathcal{I}_{G_{j-1}, B_{j-1}, k}\ne\emptyset}w(G_{j-1},B_{j-1},k)>1,
\end{equation*}
and 
\begin{equation*} \label{ratio_nu_mu_is_small-j-1}
\frac{\nu_{j-1}({B_{j-1}})}{\mu({B_{j-1}})} \le r_{n}^{-\varepsilon}  \ \quad  \text{for all \  $n\ge G_{j-1}$}   \, . 
\end{equation*}
\end{itemize}

\noindent  Then,  with  the  notions and notation of \S\ref{subsect:Notations-KGB} in mind,  we let
\[
\mathcal{E}_{j}:=\bigcup_{{B_{j-1}}\in\mathcal{E}_{j-1}}\bigcup_{R\in S_{G_{j-1},B_{j-1}}}\mathcal{C}(R) 
\]
and  set  
\begin{equation*} \label{def_F_j}
\mathbb{E}_j :=\bigcup_{{B}\in\mathcal{E}_j}{B}.
\end{equation*}
By construction
$$
\mathbb{E}_{j}  \subset \mathbb{E}_{j-1}   \, . 
$$
Furthermore,  the balls in $\mathcal{E}_{j}$ are disjoint and so to define  the probability measure  $\nu_j$ supported on $\mathbb{E}_j$ it suffices to define  $\nu_j$ on each ball $B_j\in\mathcal{E}_{j}$.   
In order to do this, recall (again by construction)  that for each $B_j\in\mathcal{E}_{j}$, there exist a unique ball $B_{j-1}\in\mathcal{E}_{j-1}$ and a unique rectangle $R\in S_{G_{j-1},B_{j-1}}$ such that
\begin{equation} \label{double_inclusion}
B_j\subset R\subset B_{j-1}.
\end{equation}
Then, we define  
\begin{equation} \label{def_nu_j}
\nu_j\restriction_{B_j}:=\frac{1}{\#\mathcal{C}(R)}\cdot\frac{\mu(\widetilde{R})}{\sum_{R'\in S_{G_{j-1},B_{j-1}} }\mu(\widetilde{R'})}
\cdot \nu_{j-1}(B_{j-1})
\cdot \frac{\mu\restriction_{B_j}}{\mu(B_j)}.
\end{equation}
Hence,
\begin{equation} \label{link_nu_j_B_j_nujm1_Bjm1}
\nu_j(B_j)=\frac{1}{\#\mathcal{C}(R)}\cdot\frac{\mu(\widetilde{R})}{\sum_{R'\in S_{G_{j-1},B_{j-1}} }\mu(\widetilde{R'})}
\cdot \nu_{j-1}(B_{j-1})
\end{equation}
and it follows  that for any $B_{j-1}\in\mathcal{E}_{j-1}$
\begin{equation} \label{stabilization_mu_j_1}
\nu_{j}(B_{j-1})=\sum_{R\in S_{G_{j-1},B_{j-1}}}\sum_{B'\in\mathcal{C}(R)}\nu_j(B')=\nu_{j-1}(B_{j-1}).
\end{equation}
In turn, since  the  balls in $\mathcal{E}_j$ are disjoint, we have that 
\begin{equation*} \label{rty}
  \sum_{B_j\in\mathcal{E}_j, B_j\subset B_{j-1}} \nu_j(B_j)= \nu_{j}(B_{j-1})  \, ,
\end{equation*}
and so together  \eqref{nu-sum-1}   and \eqref{stabilization_mu_j_1} it follows that 
\begin{equation}
   \sum_{B_j\in\mathcal{E}_j} \nu_j(B_j)=\sum_{B_{j-1}\in\mathcal{E}_{j-1}}  \sum_{B_j\in\mathcal{E}_j, B_j\subset B_{j-1}} \nu_j(B_j)=\sum_{B_{j-1}\in\mathcal{E}_{j-1}} \nu_{j}(B_{j-1})=1.
\end{equation}
In other words,   $\nu_j $ is a probability measure supported on $\mathbb{E}_j$ as desired.

We now turn our attention to  choosing the integer $G_{j}$.  Recall that the collection of balls $\mathcal{E}_{j} $ is finite.  Thus, in view of  \eqref{w_tends_to_infty}, we can choose $G_{j}$ sufficiently large so that for all ${B}_{j}\in\mathcal{E}_{j}$ 
\begin{equation} \label{w_bigger_than_one-B-j}
\min_{k: \mathcal{I}_{G_j, B_{j}, k}\ne\emptyset}w(G_j,B_{j},k)>j\geq 1.
\end{equation}
Furthermore,  since $r_{n}\to 0$ as $n\to\infty$,   the integer $G_{j}$ can be chosen such that for all ${B}_{j}\in\mathcal{E}_{j}$ and all $n\ge G_{j}$ 
\begin{equation} \label{ratio_nu_mu_is_small}
\frac{\nu_{j}({B_{j}})}{\mu({B_{j}})} \le r_{n}^{-\varepsilon}.
\end{equation}
 This completes the inductive step and we define 
$$
\mathbb{E}_{\infty} \ := \ \bigcap_{j=1}^{\infty}  \mathbb{E}_j \ =  \ \bigcap_{j=1}^{\infty}\bigcup_{{B}\in \mathcal{E}_j}{B}   \, .
$$
By construction,  $\mathbb{E}_{\infty}$ is a Cantor subset of $\limsup\limits_{n\to\infty} {\prod_{i=1}^p}B_{i,n}$.

Regarding the construction of a  measure supported on $\mathbb{E}_{\infty}$, we first note that  on iterating~\eqref{stabilization_mu_j_1}, we find that for every $j,k\in\N\cup\{0\}$ and any ball $ B_{j} \in  \mathcal{E}_j $
\begin{equation} \label{stabilization_mu_j}
\nu_{j+k}(B_{j})=\nu_{j}(B_{j}).
\end{equation}
In particular, it follows that each  $\nu_j$ is a probability measure on $B_0$ and in turn that the weak-$*$ limit $\nu$ of the sequence $\{\nu_j\}_{j \in \N}$ exists  (see Remark~\ref{zzz} below).  By construction, $\nu$ is a probability measure supported on $E_\infty$ and  satisfies 
\begin{equation} \label{stabilization_nu}
\nu(B_{j})=\nu_{j}(B_{j})      \qquad \forall \ B_j \in \mathcal{E}_j   \quad {\rm and } \quad j \in \N\cup\{0\}   \, .  
\end{equation}
More generally, for any Borel subset $F$ of $X$  
\begin{equation*} \label{falc}
\nu(F):=\nu  (F \cap \mathbb{E}_{\infty} )   \  =  \    \inf\;\sum_{B\in\mathfrak{C}(F)} \nu(B)  \ ,
\end{equation*}
 where the infimum is taken over all coverings $\mathfrak{C} (F)$ of $F
\cap \mathbb{E}_{\infty} $ by balls  $B \in \bigcup_{j\in\N \cup \{0\}}  \ \mathcal{E}_j$.

\medskip

\begin{remark} \label{zzz} Although relatively standard and well know, for completeness we show that the sequence of measures $(\nu_j)_{j\in\mathbb{N}\cup\{0\}}$ has a weak-$*$ limit $\nu$. Let $f:B_0\to [-1,1]$ be a continuous function.  Since $B_0$ is closed, $f$ is uniformly continuous on $B_0$. Hence, for any $\tilde{\varepsilon}>0$, we can find $\delta>0$ such that, for every $x\in B_0$, we have
\[
\max_{y\in B(x,\delta)}|f(x)-f(y)|<\tilde{\varepsilon}.
\]
As the radii of balls in $\mathcal{E}_j$ tend to 0 as $j\to\infty$, we infer that there exists $N\in\mathbb{N}$ such that, for all $m\geq N$ and for all $B_m\in\mathcal{E}_m$, we have $r(B)<\delta$. So, for every $B_m\in\mathcal{E}_m$ and every $x\in B_m$,
\[
\left|\int_{B_m} f d \nu_m - \nu_m(B_m) f(x)\right|<\tilde{\varepsilon}\nu_m(B).
\]
Hence,  for all $n>m\geq N$, taking any $x\in B_m$, by \eqref{stabilization_mu_j} we have 
\begin{align*}
    \left|\int_{B_m} f d\nu_m-\int_{B_m} f d\nu_n\right|&=\left|\int_{B_m} f d\nu_m-\nu_m(B_m) f(x)+\nu_n(B_m) f(x)-\int_{B_m} f d\nu_n\right|\\[2ex]
    &\leq \left|\int_{B_m} f d\nu_m-\nu_m(B_m) f(x)\right|+\left|\nu_n(B_m) f(x)-\int_{B_m} f d\nu_n\right|\\[2ex]
    &<2\tilde{\varepsilon}\nu_m(B_m).
\end{align*}
Summing over all $B_m\in\mathcal{E}_m$, we obtain that
\[
\left|\int_{B_0} f d\nu_m-\int_{B_0} f d\nu_n\right|<2\tilde{\varepsilon}.
\]
Then, by Cauchy's criterion, there exists a limit
\[
\lim_{n\to\infty}\int_{B_0} f d\nu_n.
\]
Therefore, we have a positive linear functional $\phi$ on the set of continuous functions $f:B_0\to [-1,1]$ defined by
\begin{equation} \label{def_phi}
\phi(f):=\lim_{n\to\infty}\int_{B_0} f d\nu_n.
\end{equation}
By Riesz-Markov-Kakutani Theorem (see  \cite[Theorem~2.14]{Rudin}), there exists a probability measure $\nu$ which represents $\phi$, and hence, by~\eqref{def_phi}, $\nu$ is a weak-$*$ limit of $(\nu_j)_{j\in\mathbb{N}}$.
\end{remark}

\medskip 

To complete the proof of Theorem~\ref{pointcase}, it remains  to show that $\nu$  satisfies  the  H\"older exponent  estimate   \eqref{ub_general_ball}.   To start with, we prove the desired estimate for balls $B_j$  appearing in the construction of $\mathbb{E}_j$.

\subsubsection{Measure of balls in the  Cantor construction}

\label{ss_measure_of_ball_in_G_j}

Let $j\in\N$ and consider a ball $B_j\in\mathcal{E}_j$. Then by construction, there exist a unique ball $B_{j-1}\in\mathcal{E}_{j-1}$ and a unique rectangle $R\in S_{G_{j-1},B_{j-1}}$ such that $B_j\in \mathcal{C}(R)$.
It follows from~\eqref{link_nu_j_B_j_nujm1_Bjm1}
and~\eqref{stabilization_nu} that
\begin{equation} \label{link_nu_B_j_nu_Bjm1}
\nu(B_j)=\frac{1}{\#\mathcal{C}(R)}\cdot\frac{\mu(\widetilde{R})}{\sum_{R'\in S_{G_{j-1},B_{j-1}} }\mu(\widetilde{R'})}
\cdot \nu(B_{j-1}).
\end{equation}

Throughout this and the subsequent section,  we  will use the following  shorthand  to simplify the notation. 
Let   $B_{j-1}\in\mathcal{E}_{j-1}$ be given. Then for each  $ 1 \le i \le p$ and $k\in\N$
for which $\mathcal{I}_{G_{j-1}, B_{j-1}, k} \neq \emptyset$, we set 
\begin{equation} \label{shorthand_vw}
\begin{aligned}
v_i(j-1,k)&:=v_i(G_{j-1},B_{j-1},k),\\[2ex]
w(j-1,k) &:=w(G_{j-1},B_{j-1},k) .
\end{aligned}
\end{equation}
Thus, the definition of $w(G_{j-1},B_{j-1},k) $  (see  \eqref{think}) in the above notation reads
\begin{equation} \label{shorthand_vwmax}
w(j-1,k)=\max_{1 \le i  \le p} v_i(j-1,k).
\end{equation}

\medskip

Recall that by the definition of $\mathcal{C}(R)$ and Remark~\ref{rem-R-tilde}, for some $k\in\N$ and  some index $n\in \mathcal{I}_{G_{j-1},B_{j-1},k}$
\[
B_j=  B\left(\mathbf{z}, 2^{-k \, w(j-1,k)}\right), \ \quad
R=\prod_{i=1}^pB\left(x_{i,n}, 2^{-k \,  v_i(j-1,k)}\right), \ \quad \widetilde{R}=\prod_{i=1}^pB\left(x_{i,n}, 2^{-k \,  u_i}\right)  \, . 
\]

By the Ahlfors regularity assumption of the measures $\mu_i$,
we have that
\begin{equation} \label{size_mu_R}
\mu(\widetilde{R})\asymp\prod_{i=1}^p 2^{-k u_i\delta_i}.
\end{equation}
Further, it follows from the definition of $ S_{G_{j-1},B_{j-1}}$ that
\begin{equation} \label{mu_sum_rectangles_Bj}
\sum_{R'\in S_{G_{j-1},B_{j-1}} }\mu(\widetilde{R'})\asymp\mu(B_{j-1}).
\end{equation}

\noindent Then, by \eqref{cardinality_of_CR}, \eqref{link_nu_B_j_nu_Bjm1}, \eqref{size_mu_R} and~\eqref{mu_sum_rectangles_Bj}, it follows that
\begin{eqnarray}\label{ub_nu_j_B_j}
\nu(B_j)
&\asymp & \prod_{i=1}^p\left(\frac{2^{-k \, w(j-1,k)\delta_i}}{2^{-k \, v_i(j-1,k)\delta_i}}\right)\cdot\prod_{i=1}^p 2^{-k \, u_i\delta_i}
\cdot \frac{\nu(B_{j-1})}{\mu(B_{j-1})}
\nonumber \\[2ex]
&= &2^{-k \, w(j-1,k) \, s(j)} \cdot \frac{\nu(B_{j-1})}{\mu(B_{j-1})}  \, ,
\end{eqnarray}
 where $s(j)$ is given by
\[
s(j):=\sum_{i=1}^p\delta_i\left(1-\frac{v_i(j-1,k)-u_i}{w(j-1,k)}\right).
\]

\noindent Now note that by the  definition of $w(j-1,k)$ (see~\eqref{shorthand_vwmax}) and by~\eqref{w_tends_to_infty}, we have that 
$$
\frac{v_i(j-1,k)-u_i}{w(j-1,k)}\le 1   \ \ {\text{for all}}\ i\not\in \mathcal{L}(\bold{v}) \, , $$ \ {\text{and}}\ $$ \lim_{j\to \infty}\frac{v_i(j-1,k)}{w(j-1,k)}=0 \ \ {\text{for all}} \ i\in \mathcal{L}(\bold{v}).$$
Thus, it follows that for all $j$ sufficiently large 
$$
s(j)\ge \sum_{i\in \mathcal{L}(\bold{v})}\delta_i-\epsilon.
$$
Feeding this into~\eqref{ub_nu_j_B_j} together with~\eqref{ratio_nu_mu_is_small} and~\eqref{w_bigger_than_one-B-j}, implies that for all $j$ sufficiently large   
\begin{eqnarray*} 
\nu(B_j)\ll 2^{-k \, w(j-1,k)\cdot s(j)-\epsilon} & = & r(B_j)^{s(j)-\epsilon/w(j-1,k)} \nonumber \\[2ex]
&  \le & r(B_j)^{s(j)-\epsilon } \le r(B_j)^{s(\bold{v},\epsilon)}  \, ,
\end{eqnarray*}
where
\begin{equation*} \label{def_s}
s(\bold{v},\epsilon):=\sum_{i\in \mathcal{L}(\bold{v})}\delta_i-2\epsilon.
\end{equation*}
To obtain the desired estimate \eqref{ub_general_ball} for the ball $B_j$, we note that by the definition of $s_0(\bold{u},\bold{v}) $  (see \eqref{dimensionalnumber}) we have that 
\[
\sum_{i\in \mathcal{L}(\bold{v})}\delta_i\geq s_0(\bold{u},\bold{v}) 
\]
and so 
\begin{equation} \label{Holder-B_j}
\nu(B_j)  \ \ll \   r(B_j)^{s_0(\bold{u},\bold{v})-2\epsilon}    \, . 
\end{equation}

\medskip

\begin{remark}  
For later use, we note at this point that for a rectangle $R\in S_{G_{j-1},B_{j-1}}$ we can use \eqref{link_nu_B_j_nu_Bjm1} to estimate $\nu (R) $. Indeed, since    
the balls of $\mathcal{E}_j$ contained in $R$ have the same $\nu$-measure, it follows via~\eqref{link_nu_B_j_nu_Bjm1} and \eqref{mu_sum_rectangles_Bj} that
\begin{equation} \label{nu_widetilde_R}
\nu(\widetilde{R}) = \nu(R)=\#\mathcal{C}(R)\cdot\nu(B_j)\asymp \mu(\widetilde{R})\cdot\frac{\nu(B_{j-1})}{\mu(B_{j-1})}  \, . 
\end{equation}
The fact that 
$$\nu(\widetilde{R}) = \nu({R}) $$
immediately follows from the one to one correspondence of $R$ and $\widetilde{R}$  (see Remark~\ref{rem-R-tilde}) and  the construction of $\nu$; namely that it is defined via   balls $B_j$ contained in rectangles $R\in S_{G_{j-1},B_{j-1}}$.  
\end{remark}

\subsubsection{Measure of a general ball.} \label{ss_measure of a general ball}

Throughout, let $$ B:= B( \mathbf{x}, r) $$
be an arbitrary ball in $X$ with centre $\mathbf{x} \in \mathbb{E}_{\infty}$ and radius $r>0$.    Since $\mathbf{x}$ is in the Cantor set, it follows that there exists an integer  $j\in\N$ such that $ B( \mathbf{x}, r)$ intersects only one ball $B_{j-1}$ in $\mathcal{E}_{j-1}$ and at least two balls in $\mathcal{E}_{j}$. In view of the latter,  by  Proposition~\ref{pp2} we have that  $r$ is larger than the radius of any ball $B_j $ in $\mathcal{E}_j$  intersecting $ B( \mathbf{x}, r)$; that is 
\begin{equation}   \label{r_is_big}
    r \, > \, \max \Big\{  r(B_{j})   \, : \,  B_{j} \in \ \mathcal{E}_{j}  \ \   {\rm with }  \ \   B_{j} \cap  B( \mathbf{x}, r)  \neq \emptyset \Big\} \, . 
\end{equation}
In this section, we will continue to use the shorthand notation~\eqref{shorthand_vw} introduced in the previous section. Recall, that the goal is to show that $\nu( B( \mathbf{x}, r)) $ satisfies the  H\"older exponent  estimate \eqref{ub_general_ball}.   First, we deal with the case $r> r({B_{j-1}})$. By \eqref{Holder-B_j},
$$
\nu( B( \mathbf{x}, r)) \le \nu(B_{j-1}) \ll r(B_j)^{s_0(\bold{u},\bold{v})-2\epsilon} \le r^{s_0(\bold{u},\bold{v})-2\epsilon}  .  \, 
$$
Hence, the ball $ B( \mathbf{x}, r)$ satisfies  the desired  estimate~\eqref{ub_general_ball}.  Without loss of generality, we assume that 
\begin{equation} \label{r_leq_r_jm1}
r\le r({B_{j-1}}).
\end{equation}
It follows from the construction  that  the balls in $\mathcal{E}_j$ that intersect $ B( \mathbf{x}, r)$ are contained in
$\{B_j\in \mathcal{C}(R): R\in  S_{G_{j-1}, B_{j-1}}\}$. 
Let
\[
\mathcal{R}:=\left\{R\in S_{G_{j-1},B_{j-1}}: \mathcal{C}(R) \text{ contains a ball}
\text{ intersecting }  B( \mathbf{x}, r)\right\}
\]
and let
\[
\widetilde{\mathcal{R}}:=\left\{\widetilde{R}: R\in\mathcal{R}\right\}  \, . 
\]
It follows that 
\begin{equation}   \label{bnmk}
  \nu\Big( B( \mathbf{x}, r) \Big) =   \nu\Big( B( \mathbf{x}, r)\cap \bigcup_{\widetilde{R}\in\widetilde{\mathcal{R}}}\widetilde{R}\Big)
\end{equation}

Recall (see Remark~\ref{rem-R-tilde}),  each rectangle $\widetilde{R}\in\widetilde{\mathcal{R}}$ is of the form 
\begin{equation*}
\widetilde{R}=\prod_{i=1}^pB(x_{i,n}, r_n^{u_i})    \,  \quad  \text{for some} \  \  n\ge 1  \, , 
\end{equation*}
and we let 
\[
\tilde{r} : \widetilde{R} \mapsto \tilde{r}({\widetilde{R}})
:=r_n   \, . 
\]

\noindent For ease of notation, without loss of generality, we assume that $u_1=\min_{1\le i\le d}\{u_i\}$ and we divide $\widetilde{\mathcal{R}}$ into two subfamilies:
\[
\Gamma_1:=\Big\{\widetilde{R}\in\widetilde{\mathcal{R}} : \  r\geq (\textstyle{1 \over 2}  \tilde{r} (\widetilde{R}))^{u_1}\Big\},
\]
and
\[
\Gamma_2:=\widetilde{\mathcal{R}}\setminus\Gamma_1.
\]
With \eqref{bnmk} in mind, to estimate the measure of $ B( \mathbf{x}, r)$, we consider  separately the contribution from  rectangles in  $\Gamma_1$ and from $\Gamma_2$.

\medskip

\noindent $\bullet$  First we consider the contribution from rectangles in $\Gamma_1$ to the measure of $ B( \mathbf{x}, r)$.  By the definition of $\Gamma_1$, the radius $r$ is sufficiently  large so that all that  rectangles in $\Gamma_1$ are contained in the scaled ball $$(2^{u_1+1}+1)B= B(\mathbf{x},(2^{u_1+1}+1)r)   \, . $$ Thus, by~\eqref{nu_widetilde_R},
\begin{align} \label{g7_1}
\nu\Big( B( \mathbf{x}, r)\cap \bigcup_{\widetilde{R}\in\Gamma_1}\widetilde{R}\Big)&\le \sum_{\widetilde{R}\in\Gamma_1}\nu(\widetilde{R})\asymp\sum_{\widetilde{R}\in\Gamma_1}\mu(\widetilde{R})\cdot \frac{\nu(B_{j-1})}{\mu(B_{j-1})}\nonumber \\[2ex]
&\le \mu\big(B(\mathbf{x},(2^{u_1+1}+1)r)\big)\cdot \frac{\nu(B_{j-1})}{\mu(B_{j-1})}.
\end{align}
Then, by using \eqref{Holder-B_j} and then~\eqref{r_leq_r_jm1}, we infer from~\eqref{g7_1} that
 \begin{align}\label{g7}
\nu\Big( B( \mathbf{x}, r)\cap \bigcup_{\widetilde{R}\in\Gamma_1}\widetilde{R}\Big)&\ll \mu( B( \mathbf{x}, r))\cdot \frac{r(B_{j-1})^{s_0(\bold{u},\bold{v})-2\varepsilon}}{\mu(B_{j-1})} \nonumber \\[2ex]
&\asymp r^{\delta_1+\cdots+\delta_p}\cdot\frac{r(B_{j-1})^{s_0(\bold{u},\bold{v})-2\varepsilon}}{r(B_{j-1})^{\delta_1+\cdots+\delta_p}} \nonumber \\[2ex] & \le r^{s_0(\bold{u},\bold{v})-2\varepsilon}.
\end{align}

\medskip

\noindent $\bullet$  To estimate the contribution from  rectangles in $\Gamma_2$, 
we  subdivide $\Gamma_2$  into finer  sub-classes. For every $k\in\N$  for which $\mathcal{I}_{G_{j-1}, B_{j-1}, k} \neq \emptyset$, we let
\[
\Gamma_{2,k}  :=\left\{ \widetilde{R}\in\Gamma_2 : R\in S_{G_{j-1},B_{j-1},k} \right\}   \, 
\]
and we proceed to estimate the quantity 
\begin{equation}  \label{hohoho}
    \nu\Big( B( \mathbf{x}, r)\cap \bigcup_{\widetilde{R}\in \Gamma_{2,k}}\widetilde{R}\Big)  \, .
\end{equation}



We start with some preliminaries.  Let $\widetilde{R}\in\Gamma_{2,k}$ for some $k\in\N$. Recall that $\widetilde{R}$ is of the form $\widetilde{R}=\prod_{i=1}^dB(x_{i,n}, 2^{-k \, u_i})$.   
Furthermore, by the definition of 
 $\Gamma_{2,k}$ we have
\begin{equation} \label{r_ll_u1}
r <\big({\textstyle{1 \over 2}} \tilde{r}  (\widetilde{R})\big)^{u_1} < 2^{-k\cdot u_1}.
\end{equation}
On the other hand, it follows from~\eqref{r_is_big} that 
\begin{equation} \label{r_gg_w}
r\geq  2^{-k\cdot w(j-1,k)}.
\end{equation}

With the quantities appearing in~\eqref{shorthand_vw} in mind, define
\[
\mathcal{A}_{j,k}  \ := \    \big\{u_i : 1\le i\le p\big\}     \  \cup  \  \big\{v_i(j-1,k): 1\le i\le p\big\}  \,  .  
\]
Note that since  $k\in\N$ with $\mathcal{I}_{G_{j-1}, B_{j-1}, k} \neq \emptyset$, both the above sets are non-empty. Also, note that by definition and the fact that  we can assume that  \eqref{v_in_bigger_than_u_i} is true for all $n \in \N$,  it follows that    $w(j-1,k)$ is the maximum of the finite set $\mathcal{A}_{j,k}$.  It is clear that there are $ 2 \le t\le 2p$ elements in $\mathcal{A}_{j,k}$. In what follows, we will denote by $A_{\ell}$    $(1 \le \ell \le t)$ the elements of $\mathcal{A}_{j,k}$ listed in ascending order. In other words,
\[
\mathcal{A}_{j,k}=\left\{ A_1,\dots,A_t \right\}
\]
and $A_{\ell}<A_{\ell+1}$ for all $1 \le \ell \le t$.
It follows from~\eqref{r_ll_u1} and~\eqref{r_gg_w} that there exists an index $\ell$ such that for two consecutive terms $A_\ell$ and  $A_{\ell+1}$ of $\mathcal{A}_{j,k}$, we have that
\begin{equation} \label{bhy}
2^{-k\cdot A_{\ell+1}} \, \le \  r \ < \,  2^{-k\cdot A_\ell}  \, .
\end{equation}
With this in mind, let 
\begin{equation*}
\K_1=\{i: u_i\ge A_{\ell+1}\}, \quad \ \K_2=\{i: v_i(j-1,k)\le A_\ell\},  \quad \ \K_3=\{1,\cdots,p\}\setminus (\K_1\cup \K_2).
\end{equation*}

\medskip 

 We are  now in the position  to estimate \eqref{hohoho}. We proceed in two steps.
\begin{itemize}\item[(i)] We estimate the cardinality of $\Gamma_{2,k}$. Note that if $i\not\in \K_1$, then $u_i<A_{\ell+1}$ and so $u_i\leq A_{\ell}$.
Therefore, we can define an ``enlarged'' body of the ball $ B( \mathbf{x}, r)$:
$$
H:=\prod_{i=1}^pB(x_i, (2^{u_i+1}+1)\epsilon_i)  \quad {\text{where}}\quad  \epsilon_i : =\left\{
\begin{array}{ll}
r & \hbox{if \ $i\in \K_1$;} \\[2ex]
2^{-k \cdot u_i}  & \hbox{otherwise.}
\end{array}
\right.
$$ Then, all the rectangles $\widetilde{R}\in \Gamma_{2,k}$ are contained in $H$. Since these rectangles $\widetilde{R}$ are disjoint, a straightforward   volume argument shows that  
\begin{equation}\label{ee1}\# \Gamma_{2,k}  \ \ll  \ \prod_{i\in \K_1}\left(\frac{r^{\delta_i}}{2^{-k \cdot u_i\delta_i}}\right). 
\end{equation}

\item[(ii)] For every $\widetilde{R}\in \Gamma_{2,k}$, we bound from above the total number $T$ of balls $B_{j}\in \mathcal{C}(R)$ intersecting the ball $ B( \mathbf{x}, r)$.
By~\eqref{r_is_big} and if necessary also see Proposition~\ref{pp2}, it follows that these balls $B_j$ are contained in $B(\mathbf{x},2r)\cap R$.  Also recall that by definition,  the  radius of any $B_{j}$  in $ \mathcal{C}(R)$ is  $2^{-k \cdot w(j-1, \, k)}$.   Thus a  straightforward  volume argument yields that
\begin{eqnarray*}
T\cdot 2^{-k\cdot w(j-1,k)\sum_{i=1}^p\delta_i} &\ll & \mu\Big({R}\cap B(x,2r)\Big)\\[2ex] 
& \leq & {} \prod_{i\in \K_1\cup\K_3}2^{-k\cdot v_i(j-1,k)\delta_i}\cdot \prod_{i\in \K_2} r^{\delta_i}.
\end{eqnarray*}

In turn, this together with (\ref{ee1}), enables us to  bound from above the total number $M$ of balls $B_{j}$ in the collection $\bigcup_{R\in  S_{G_{j-1}, B_{j-1}}}\mathcal{C}(R)$
intersecting $ B( \mathbf{x}, r)$:

\begin{align}\label{size-M}
M  \ \ll  \ &\prod_{i\in \K_1}\left(\frac{r^{\delta_i}}{2^{-k\cdot u_i\delta_i}}\right)\cdot T \nonumber\\[2ex]
 \ \ll  \ &\prod_{i\in \K_1}\left(\frac{r^{\delta_i}}{2^{-k\cdot u_i\delta_i}}\right)\cdot 2^{k\cdot w(j-1,k)\sum_{i=1}^p\delta_i}\cdot \prod_{i\in \K_1\cup\K_3}2^{-k\cdot v_i(j-1,k)\delta_i}\cdot \prod_{i\in \K_2} r^{\delta_i}.
\end{align}
\end{itemize}

\medskip

\noindent It follows from~\eqref{link_nu_j_B_j_nujm1_Bjm1} and  \eqref{stabilization_mu_j},  
that for every  $B_{j}\in\bigcup_{R\in  S_{G_{j-1}, B_{j-1}}}\mathcal{C}(R)$,
\[
	\nu(B_j)\asymp\frac{\mu(\widetilde{R})}{\#\mathcal{C}(R)}\cdot\frac{\nu(B_{j-1})}{\mu(B_{j-1})},
\]
where $R\in S_{G_{j-1},B_{j-1}}$ is such that $B_j\in \mathcal{C}(R)$. 
Hence, it follows that  
\[
\nu( B( \mathbf{x}, r)\cap \Gamma_{2,k}) \ \ll  \  M\cdot (\# \mathcal{C}(R))^{-1} \cdot \prod_{i=1}^p2^{-k\cdot u_i\delta_i}\cdot \frac{\nu(B_{j-1})}{\mu(B_{j-1})}.
\]
By \eqref{cardinality_of_CR}, and \eqref{size-M} we have
\begin{align}\label{f4}
\nu( B( \mathbf{x}, r)\cap \Gamma_{2,k}) 
& \ \ll  \prod_{i\in \K_1}r^{\delta_i}\cdot \prod_{i\in \K_2}r^{\delta_i}\cdot \prod_{i\in \K_2}\frac{2^{-k\cdot u_i\delta_i}}{2^{-k\cdot v_i(j-1,k)\delta_i}}\cdot\prod_{i\in \K_3}2^{-k\cdot u_i\delta_i}\cdot \frac{\nu(B_{j-1})}{\mu(B_{j-1})}\nonumber\\[2ex]
& \le {r^s}\cdot \frac{\nu(B_{j-1})}{\mu(B_{j-1})},
\end{align}
where the last inequality is true if $s$ satisfies
\begin{equation}\label{pplm}
s    \ \le  \   \sum_{i\in \K_1}\delta_i+\sum_{i\in \K_2}\delta_i+ \frac{1}{\log r } \left({\sum_{i\in \K_3}u_i\delta_i}-\sum_{i\in \K_2}(v_i(j-1,k)-u_i))\delta_i\right)\log 2^{-k}
\end{equation} 
for all $r$ in  the range    \eqref{bhy}; i.e.   
$
2^{-k\cdot A_{\ell+1}}\le r< 2^{-k\cdot A_\ell}   $ .

Note that the right hand side of \eqref{pplm} is monotonic in $r$. Thus,
 the optimal choice of $s$ is attained at either  $r=2^{-k\cdot A_{\ell+1}}$ or  $r=2^{-k \cdot A_\ell}$;  i.e. the minimum   of the following two numbers:
\begin{equation*}
  s_1:=\sum_{i\in \K_1}\delta_i+\sum_{i\in \K_2}\delta_i+\frac{1}{A_\ell}\Big(\sum_{i\in \K_3}u_i\delta_i-\sum_{i\in \K_2}(v_i(j-1,k)-u_i)\delta_i\Big),
\end{equation*}
and 
\begin{equation*}
s_2:=\sum_{i\in \K_1}\delta_i+\sum_{i\in \K_2}\delta_i+\frac{1}{A_{\ell+1}}\Big(\sum_{i\in \K_3}u_i\delta_i-\sum_{i\in \K_2}(v_i(j-1,k)-u_i)\delta_i\Big).
\end{equation*}
Now,  recall the definition of the dimensional number  $ {s}_0(\mathbf{u}, \mathbf{v})$ given  by~\eqref{dimensionalnumber} and   the quantity   ${s}(\mathbf{u}, \mathbf{v},A)$ given  by~\eqref{def_suv}.   Also, recall  that $A_{\ell}$ and $A_{\ell+1}$ are two different but neighboring numbers in $\mathcal{A}_{j,k}$. It follows that \begin{align*}
       {\K}_1(A_\ell)=\{i: u_i>A_\ell\}&=\{i: u_i\ge A_{\ell+1}\}{=\K_1},\ \quad \  {\K}_2(A_\ell)=\{i: v_i(j-1,k)\le A_{\ell}\}=\K_2.
      \end{align*}
  Hence, $s_1$ is nothing but the quantity  $s(\bold{u}, \bold{v}(j-1,k), A_\ell)$ as defined in~\eqref{def_suv},
 and $s_2$ is  equal to the quantity  $s(\bold{u}, \bold{v}(j-1,k), A_{\ell+1})$.
 The upshot of this  is that
\begin{eqnarray*}
 s    \ \ge \  \min\{s_1, s_2\}  \   \ge  \    s_0(\bold{u},\bold{v}(j-1,k))    \ \ge \   s_0(\bold{u},\bold{v})-\varepsilon,
  \end{eqnarray*} where 
we have used first Proposition~\ref{Prop:4} and then~\eqref{s0_is_continuous}.  On  feeding this into   (\ref{f4}), we obtain that 
\begin{align*}
\nu\Big( B( \mathbf{x}, r)\cap \bigcup_{\widetilde{R}\in \Gamma_{2,k}}\widetilde{R}\Big)\le {r^{s_0(\bold{u},\bold{v})-\varepsilon}}\cdot \frac{\nu(B_{j-1})}{\mu(B_{j-1})}.
 \end{align*}
It follows from \eqref{ratio_nu_mu_is_small} that
$$
 \frac{\nu(B_{j-1})}{\mu(B_{j-1})}\leq r_n^{-\varepsilon}=2^{-k\epsilon}.
$$
Hence,
\begin{eqnarray*}
\nu\Big( B( \mathbf{x}, r)\cap \bigcup_{\widetilde{R}\in\Gamma_2}\widetilde{R}\Big) &\le & 
\sum_{k }\nu\Big( B( \mathbf{x}, r)\cap \bigcup_{\widetilde{R}\in \Gamma_{2,k}}\widetilde{R}\Big) \\[2ex] 
& \le  &  \sum_{k\ge 1}r^{s_0(\bold{u},\bold{v})-\epsilon} \cdot 2^{-k \epsilon} \ \asymp  \  r^{s_0(\bold{u},\bold{v})-\epsilon}.
\end{eqnarray*}
This together with \eqref{bnmk}  and \eqref{g7} establishes~\eqref{ub_general_ball} for an arbitrary ball  and thereby completes the proof of Theorem~\ref{pointcase}.

\section{Extending Theorem~\ref{pointcase} to resonant sets \label{extendingour}}

In the setup of  Theorem~\ref{pointcase}, the rectangles arise as products of balls $B_{i,n}$ in $X_i$.  Balls of radius $r$ are of course $r$-neighbourhoods of special points; namely their centres.
The general `rectangles to rectangles'  Mass Transference Principle of  Wang $\&$  Wu \cite{WW2021} is based on the framework of ubiquitous systems. This  allows them to naturally consider the situation in which  the balls $B_{i,n}$  are replaced by neighbourhoods of special sets called  resonant sets.  It is not particularly  surprising that on combining the arguments in Wang-Wu \cite{WW2021} and the current paper,  Theorem~\ref{pointcase} can  be extended to case of resonant sets.   In this section we briefly  describe the setup  and state the resonant set version of Theorem~\ref{pointcase}.  The details are left to the energetic reader. 

For each $1 \le i \le p$, let
 $X_i$ be a locally compact subset of $\R^{d_i}$ equipped with a  $\delta_i$-Ahlfors regular measure $\mu_i$.
Let $J$ be an infinite countable index set and  $\beta: J\to\mathbb{R}^+$ be a positive function such that for any $M>1$, the set $\{\alpha\in J: \beta_\alpha<M\}$ is finite.  For $1\leq i\leq p$, let ${\cal R}_i:=\{R_{i,\alpha}: \alpha\in J\}$ be a sequence of subset of $X_i$. The  sets associated with  ${\cal R}_i$ are referred to as \emph{resonant sets}.   Denote by $\Delta(R_{i,\alpha},r)$ the $r$-neighborhood of $R_{i,\alpha}$ in $X_i$.  Naturally, in the case that the resonant sets are points $\{ x_{i,\alpha} : \alpha \in J \}$  as  within the framework of Theorem~\ref{pointcase},   the neighbourhood $\Delta(R_{i,\alpha},r)$ corresponds to a ball $B(x_{i,\alpha}, r)$.   In order to state the analogue of Theorem~\ref{pointcase} for resonant sets, we need to impose a ``scaling'' condition on the neighbourhood of resonant sets  and in turn  appropriately modify the dimension number.

\begin{itemize}
    \item As in \cite{WW2021}, we assume that for each $1 \le i \le p$, the sequence ${\cal R}_i$  of  subsets of $X_i$ satisfy the  $\kappa$-scaling property. Recall (see for instance \cite[Definition 3.1]{WW2021}), given ${\cal R}_i$  and    $0\leq \kappa<1$, we say that ${\cal R}_i$  satisfies the \emph{$\kappa$-scaling property} if there exist constants $c_1,c_2, r_0 >0 $  such that for any $\alpha\in J$, any ball $B(x_i,r)\subset X_i$  with center $x_i\in R_{i,\alpha}$ and any $0<\varepsilon<r < r_0$, we have
$$ c_1 \; r_i^{\delta_i\kappa}\varepsilon^{\delta_i(1-\kappa)}   \, \le \,  \mu_i(B(x_i, r_i)\cap\Delta(R_{i, \alpha},\varepsilon))  \, \le \, c_2  \; r_i^{\delta_i\kappa}\varepsilon^{\delta_i(1-\kappa)}   \; .  $$ 
Observe that in the case the resonant sets  are points,  ${\cal R}_i$  satisfies the $\kappa$-scaling property  with   $\kappa = 0$. 
\item  With the setup leading to \eqref{dimensionalnumber} in mind, the  following  modification of the dimension number $s_0(\mathbf{u},\mathbf{v})$   appearing in Theorem~\ref{pointcase}  reflects the role  of the above  $\kappa$-scaling property:
$$s_0^*(\mathbf{u},\mathbf{v}):=\min \big\{\min_{i\in \mathcal{L}(\mathbf{v})}\{{\theta}^*_i(\mathbf{u}, \mathbf{v})\}, \ \sum_{i\in\mathcal{L}(\mathbf{v})}\delta_i\big\},$$
where
$${\theta}^*_i(\mathbf{u}, \mathbf{v}):=
	\sum_{k\in \mathcal{K}_1(i)}\delta_k+\sum_{k\in \mathcal{K}_2(i)}\delta_k\left(1-\frac{{(1-\kappa)}(v_k-u_k)}{v_i}\right) +{(1-\kappa)}\sum_{k\in \mathcal{K}_3(i)}\frac{\delta_ku_k}{v_i}.$$

\end{itemize}

 \noindent  The following statement constitutes  the  unbounded version of  the  `rectangles to rectangles' Mass Transference Principle  for resonant sets.

\begin{theorem}\label{dimension} 
Under the setting above,  for  each  $1\leq i \leq p$, let $\psi_i: \mathbb{R}^+\to\mathbb{R}^+$ be a real positive function such that  $\psi_i(t)\to 0$ as $t\to\infty$.  
Furthermore,  assume that there exist functions $\rho: \mathbb{R}^+\to\mathbb{R}^+$ and  $v_i: \mathbb{R}^+\to\mathbb{R}^+$ ($1\leq i \leq p$) such that $$\lim\limits_{t\to\infty}\rho(t) =  0   \, , $$ 
\[
\psi_i(\beta_\alpha)=\rho(\beta_\alpha)^{v_i(\beta_\alpha)}
\] 
and $$\lim\limits_{t\to\infty}v_i(t)=v_i,$$
for some ${\mathbf{v}}=(v_1,\dots, v_p)\in(\mathbb{R}^+ \cup\{+\infty\} )^p$.
Finally, suppose that there exists $\mathbf{u}=(u_1,\dots, u_p)\in (\mathbb{R}^+)^p $ verifying~\eqref{111} and 
		\begin{equation}\label{h4sv}
			\mu_1\times\cdots\times\mu_p\left(\limsup_{\alpha\in J: \beta_{\alpha}\to \infty}\prod_{i=1}^p \Delta\Big(R_{\alpha, i}, \rho(\beta_{\alpha})^{u_i}\Big)\right)=\mu_1\times\cdots\times\mu_p\left(\prod_{i=1}^pX_i\right). \end{equation}
		Then $$
		\dim_{\rm H} \left(\limsup_{\alpha\in J: \beta_{\alpha}\to \infty}\prod_{i=1}^p \Delta\Big(R_{\alpha, i}, \psi_i(\beta_{\alpha})\Big)\right)\ge s_0^*(\mathbf{u},\mathbf{v}).
		$$
	\end{theorem}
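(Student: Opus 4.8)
The plan is to reduce Theorem~\ref{dimension} to Theorem~\ref{pointcase} by generalising the Cantor construction of \S6 to the resonant-set setting, following the blueprint of Wang--Wu~\cite{WW2021} for the bounded case but incorporating the unbounded modifications already developed for Theorem~\ref{pointcase}. First I would set up notation exactly as in the point case: write $\Gamma_\alpha := \prod_{i=1}^p\Delta(R_{i,\alpha},\rho(\beta_\alpha)^{u_i})$ for the ``blown-up'' neighbourhoods appearing in the full-measure hypothesis~\eqref{h4sv}, and $\widehat\Gamma_\alpha := \prod_{i=1}^p\Delta(R_{i,\alpha},\psi_i(\beta_\alpha))$ for the original ones. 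As in Remark~\ref{rem:7a} and \S\ref{subsect:KGB}, using that $\{\alpha\in J:\beta_\alpha<M\}$ is finite one may re-index along $\beta_\alpha\to\infty$, assume $\rho(\beta_\alpha)$ is a power of $2$, replace $v_i(\beta_\alpha)$ by the slightly perturbed $\tilde v_{i}(\beta_\alpha)$ so that $\psi_i(\beta_\alpha)=\rho(\beta_\alpha)^{\tilde v_{i}(\beta_\alpha)}$ with $\tilde v_i(\beta_\alpha)\to v_i$, and invoke Remarks~\ref{WwwwwWsv} and~\ref{111_to_222S} to reduce to the case $\max_i v_i=+\infty$ and to the strict inequalities $u_i<v_i$. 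These reductions carry over verbatim because they only used $\rho(\beta_\alpha)\to0$, the convergence of the exponents, and continuity of $s_0$ (Proposition~\ref{ll1}), none of which sees the internal structure of the $\Delta(R_{i,\alpha},\cdot)$.

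The heart of the argument is to re-run the covering machinery of \S\ref{subsect:additions}--\S\ref{subsect:Notations-KGB} with balls replaced by neighbourhoods of resonant sets. The key replacement is the $\kappa$-scaling property: where \S6 used Ahlfors regularity $\mu_i(B(x_i,r))\asymp r^{\delta_i}$ to compute $\mu(\widetilde R)\asymp\prod 2^{-ku_i\delta_i}$ and to count $\#\mathcal C(R)$, I would instead use $\mu_i(B(x_i,r_i)\cap\Delta(R_{i,\alpha},\varepsilon))\asymp r_i^{\delta_i\kappa}\varepsilon^{\delta_i(1-\kappa)}$. Concretely: (i) the covering lemma for rectangles (Lemma~\ref{Vitaly_cover_rectangles}) and the $K_{G,B}$ lemma (Lemma~\ref{k2}) go through unchanged since they are metric/measure-covering statements about the product metric $\tilde\rho$ and the full-measure hypothesis~\eqref{h4sv}; (ii) in the definition of $\mathcal C(R)$ one now tiles the shrunk resonant-neighbourhood rectangle $R$ by smaller resonant-neighbourhood rectangles rather than by balls, and a $\kappa$-scaling volume computation replaces~\eqref{cardinality_of_CR}; (iii) the local measure estimates in \S\ref{ss_measure_of_ball_in_G_j} and \S\ref{ss_measure of a general ball} are redone with each factor $2^{-k\,w\,\delta_i}$ in a ``ball'' direction replaced by a mixed term $(2^{-k\,w})^{\delta_i\kappa}(2^{-k\,w})^{\delta_i(1-\kappa)}$ — or more precisely, the directions that are ``resolved'' by the ball $B(\mathbf x,r)$ contribute with exponent $\delta_i(1-\kappa)$ and the unresolved ones with $\delta_i\kappa$ plus $\delta_i(1-\kappa)$. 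Tracking these exponents through the optimisation over consecutive elements $A_\ell<A_{\ell+1}$ of $\mathcal A_{j,k}$ exactly as in \S\ref{ss_measure of a general ball}, the two candidate exponents $s_1,s_2$ become $s(\mathbf u,\mathbf v(j-1,k),A_\ell)$ and $s(\mathbf u,\mathbf v(j-1,k),A_{\ell+1})$ computed with the $\kappa$-modified $s$-functional, i.e. with $(v_k-u_k)$ and $u_k$ each weighted by $(1-\kappa)$; these are precisely the quantities entering $\theta^*_i$ and hence $s_0^*$. An analogue of Proposition~\ref{Prop:4} — for which one cites~\cite[Proposition 3.1]{WW2021} in the bounded case and repeats the $v^*$-truncation argument of \S\ref{Sec:App} in the unbounded case — then shows the $\overline s$-type terms are again irrelevant, so the minimum over $\mathcal A_{j,k}$ is bounded below by $s_0^*(\mathbf u,\mathbf v(j-1,k))$, which by continuity is $\ge s_0^*(\mathbf u,\mathbf v)-\varepsilon$.

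With the modified Hölder estimate $\nu(B(\mathbf x,r))\ll r^{s_0^*(\mathbf u,\mathbf v)-2\varepsilon}$ in hand, the Mass Distribution Principle gives $\dim_{\rm H}\mathbb E_\infty\ge s_0^*(\mathbf u,\mathbf v)-2\varepsilon$, and since $\mathbb E_\infty\subset\limsup_{\beta_\alpha\to\infty}\widehat\Gamma_\alpha$ and $\varepsilon$ is arbitrary, the theorem follows. The one genuinely new technical point requiring care — and the main obstacle — is the tiling of a $\kappa$-scaling resonant-neighbourhood ``rectangle'' $R=\prod_i\Delta(R_{i,\alpha},\rho^{v_i(j-1,k)})$ by smaller such neighbourhoods (step (ii) above) and the corresponding counting estimates: unlike balls, the sets $\Delta(R_{i,\alpha},\varepsilon)$ need not themselves tile cleanly, so one must use the $5r$-covering lemma inside each $X_i$ centred at points of $R_{i,\alpha}$ and invoke the $\kappa$-scaling bounds to control overlaps and cardinalities, exactly as Wang--Wu do in the bounded regime; the unbounded modifications (the $w(j-1,k)\to\infty$ mechanism of~\eqref{w_tends_to_infty}, the powers-of-$2$ discretisation, and the continuity Propositions) then graft on without further friction. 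Everything else is bookkeeping, which is why the details are safely left to the reader.
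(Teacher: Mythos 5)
Your proposal follows exactly the route the paper prescribes: the paper gives no detailed proof of Theorem~\ref{dimension}, stating only that it follows by combining the resonant-set arguments of Wang--Wu \cite{WW2021} (where the $\kappa$-scaling property replaces the Ahlfors regularity estimate \eqref{MTPmeasure} in the covering, counting and local measure computations, producing the $(1-\kappa)$-weighted exponents in $\theta^*_i$) with the unbounded Cantor construction used to prove Theorem~\ref{pointcase}, the details being left to the reader. Your outline---including the reductions via the analogues of Remark~\ref{111_to_222S} and Propositions~\ref{Prop:4} and \ref{ll1}, and the $5r$-covering/$\kappa$-scaling count of balls centred on the resonant sets inside each shrunk rectangle---is precisely that intended argument.
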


 \medskip

\begin{remark}
    In the case the resonant sets are points and with Theorem~\ref{pointcase} in mind,    the sequence $\{\rho(\beta_\alpha)\}_{\alpha\in J}$ corresponds to $\{r_n\}_{n\in\mathbb{N}}$ and $\{v_i(\beta_\alpha)\}_{\alpha\in J}$ corresponds to the sequence $\{v_{i,n}\}_{n\in\mathbb{N}}$.    Moreover,   by definition, it follows that 
    $$  \Delta\Big(R_{i, \alpha}, \rho(\beta_{\alpha})^{u_i}\Big) = B(x_{i,n},r_n^{u_i})  :=
    B_{i,n}^{s_{i,n}}   \, $$ 
    where $s_{i,n} $ is given by \eqref{sin}.  Thus,   the full measure statements  \eqref{fullmeasure1}	and \eqref{h4sv} coincide. 
\end{remark}

\section*{Acknowledgement}

B.L.~was partially supported by NSFC (No. 12271176) and Guangdong Natural Science Foundation 2024A1515010946. 
L.L. was partially supported by the Fundamental Research Funds for the Central Universities of China (1301/600460054). B.W. was partially supported by NSFC (No. 12331005).

\vspace*{6ex}




 { }


\vspace*{10ex}

\noindent Bing Li: Department of Mathematics,
South China University of Technology,

\vspace{-2mm}

\noindent\phantom{Bing Li: }Wushan Road 381, Tianhe District, Guangzhou, China


\noindent\phantom{Bing Li: }e-mail: scbingli@scut.edu.cn


\vspace{5mm}

\noindent Lingmin Liao: School of Mathematics and Statistics,
Wuhan University, 
\vspace{-2mm}

\noindent\phantom{Lingmin Liao: }Bayi Road 299, Wuchang District, Wuhan, China

\noindent\phantom{Lingmin Liao: }e-mail: lmliao@whu.edu.cn 


\vspace{5mm}

\noindent Sanju Velani: Department of Mathematics,
University of York,

\vspace{-2mm}

\noindent\phantom{Sanju Velani: }Heslington, York, YO10
5DD, England.


\noindent\phantom{Sanju Velani: }e-mail: sanju.velani@york.ac.uk


\vspace{5mm}

\noindent Baowei Wang:  School of Mathematics, Huazhong University of Science and Technology,

\vspace{-2mm}

\noindent\phantom{Baowei Wang: } Wuhan
430074, China.


\noindent\phantom{Baowei Wang: } e-mail: bwei\_wang@hust.edu.cn

\vspace{5mm}

\noindent Evgeniy Zorin: Department of Mathematics,
University of York,

\vspace{-2mm}

\noindent\phantom{Evgeniy Zorin: }Heslington, York, YO10
5DD, England.


\noindent\phantom{Evgeniy Zorin: }e-mail: evgeniy.zorin@york.ac.uk

\end{document}